\newcommand{\vertiii}[1]{{\left\vert\kern-0.25ex\left\vert\kern-0.25ex\left\vert #1 
    \right\vert\kern-0.25ex\right\vert\kern-0.25ex\right\vert}}
\newcommand{\D}{\mathbb{D}}
\newcommand{\T}{\mathbb{T}}
\newcommand{\N}{\mathbb{N}}
\newcommand{\C}{\mathbb{C}}
\newcommand{\R}{\mathbb{R}}
\renewcommand{\H}{\mathcal{H}}
\renewcommand{\Re}{\operatorname{Re}}
\newcommand{\A}{\mathscr{A}}
\newcommand{\B}{\mathscr{B}}
\newcommand{\BB}{\mathcal{B}}
\newcommand{\KK}{\mathcal{K}}
\def\la{\lambda}           
                  \def\z{\zeta}
\newcommand{\al}{\alpha}
\newcommand{\ga}{\gamma}
\newtheorem{theorem}{Theorem}[section]
\newtheorem{lemma}[theorem]{Lemma}
\newtheorem{corollary}[theorem]{Corollary}
\newtheorem{proposition}[theorem]{Proposition}
\newtheorem{question}[theorem]{Question}
\theoremstyle{definition}
\newtheorem{definition}[theorem]{Definition}
\newtheorem{remark}[theorem]{Remark}
\numberwithin{equation}{section}
\theoremstyle{definition}
\numberwithin{equation}{section}
\theoremstyle{plain} 
\newcommand{\thistheoremname}{}
\newtheorem*{genericthm*}{\thistheoremname}
\newenvironment{namedthm*}[1]
{\renewcommand{\thistheoremname}{#1}%
	\begin{genericthm*}}
	{\end{genericthm*}}
\begin{document}

\title
{Composition of analytic paraproducts}

\author[A. Aleman]{Alexandru Aleman}
\address{A. Aleman: Department of Mathematics, University of Lund, P.O.\ Box 118, SE-221 00, Lund, Sweden}
\email{alexandru.aleman@math.lu.se}
\author[C. Cascante]{Carme Cascante}
\address{C. Cascante: Departament de Matem\`atiques i
    Inform\`atica, Universitat  de Barcelona,
     Gran Via 585, 08071 Barcelona, Spain}
\email{cascante@ub.edu}
\author[J. F\`abrega]{Joan F\`abrega}
\address{J. F\`abrega: Departament de Matem\`atiques i
    Inform\`atica, Universitat  de Barcelona,
     Gran Via 585, 08071 Barcelona, Spain}
\email{joan$_{-}$fabrega@ub.edu}
\author[D. Pascuas]{Daniel Pascuas}
\address{D. Pascuas: Departament de Matem\`atiques i
    Inform\`atica, Universitat  de Barcelona,
     Gran Via 585, 08071 Barcelona, Spain}
\email{daniel$_{-}$pascuas@ub.edu}
\author[J. A. Pel\'aez]{Jos\'e \'Angel Pel\'aez}
\address{J. A. Pel\'aez: Departamento de An\'alisis Matem\'atico, Universidad de M\'alaga, Campus de
Teatinos, 29071 M\'alaga, Spain} \email{japelaez@uma.es}

\thanks{
The research of the second, third and fourth author
	was supported in part by
        Ministerio de Econom\'{\i}a y Competitividad, 
Spain,   project  
MTM2017-83499-P,  
and Generalitat de Catalunya, 
project
2017SGR358.
The research of the fifth author was supported in part by Ministerio de Econom\'{\i}a y Competitividad, Spain, projects
PGC2018-096166-B-100; La Junta de Andaluc{\'i}a,
projects FQM210  and UMA18-FEDERJA-002}
\date{\today}

\subjclass[2020]{30H10, 30H20, 47G10}

\keywords{Analytic paraproduct, Hardy spaces, weighted Bergman spaces, Bloch space, BMOA space}

\begin{abstract} 
For a fixed  analytic function $g$ on the unit disc $\D$, we consider the analytic paraproducts induced by $g$, which are defined by 
$T_gf(z)= \int_0^z f(\z)g'(\z)\,d\z$,
$S_gf(z)= \int_0^z f'(\z)g(\z)\,d\z$, and
$M_gf(z)= f(z)g(z)$.
  The boundedness of these operators on various spaces of analytic functions on $\D$ is well understood. The original motivation for this work is to understand the boundedness of compositions of two of these operators, for example $T_g^2, \,T_gS_g,\,  M_gT_g$, etc. Our methods yield a characterization of the  boundedness    of a large class of operators contained in  the algebra 
generated by these  analytic paraproducts acting on the classical weighted Bergman and Hardy spaces in terms of the symbol $g$. In some cases it turns out that this property is not  affected by cancellation, while in others it requires stronger and more subtle restrictions on the oscillation of the symbol $g$ than  the case of a single paraproduct. 
\end{abstract}
\maketitle


\section{Introduction}

Let $\H(\D)$ be the space of analytic functions on the unit disc $\D$ of the complex plane. For  $\alpha>-1$ and $0<p<\infty$, the weighted Bergman space
 $A^p_{\alpha}$ consists of the functions $f\in \H(\D)$ such that
\[
\|f\|_{\alpha,p}^p:=
(\alpha+1)\int_{\D}|f(z)|^p(1-|z|^2)^{\alpha}\,dA(z)<\infty,
\] 
where $dA$ is the the normalized area measure on $\D$.
Let $H^p$, $0<p\le \infty$, denote the classical Hardy space of analytic functions in $\D$.
To simplify the  notations,  we shall write $A^p_{-1}:=H^p$ and $\|\cdot\|_{-1,p}:=\|\cdot\|_{H^p}$, $0<p<\infty$. Given $g\in \H(\D)$, let us consider the multiplication operator $M_gf= fg$ and the integral operators
\[
\begin{aligned}
T_gf(z)= \int_0^z f(\z)g'(\z)\,d\z \quad 
S_gf(z)= \int_0^z f'(\z)g(\z)\,d\z\qquad
(z\in\D).
\end{aligned}
\]
Due to the integration by parts relation
\begin{equation}\label{eqn:formula}
 M_gf=T_gf+S_gf +g(0)f(0),
 \end{equation}
 we call these operators {\em{analytic paraproducts}}.
 
It is well-known  \cite{Aleman:Cima,Aleman:Constantin,Aleman:Siskakis:1,Pommerenke}  that  $T_g$ is bounded  on $A^p_\alpha$ if and only if $g$ belongs to 
the Bloch space $\B$  when $\alpha>-1$,  and $g\in BMOA$  in the Hardy space case $\alpha=-1$. In particular, these results show that cancellation plays a key role 
in the boundedness of the integral operator $T_g$. This is very different from the case of $M_g$ and $S_g$, whose boundedness on these spaces is equivalent to the boundedness of $g$ in $\D$ (see  Proposition \ref{prop:SM-bounded-compact}  below and the references following it). 

Throughout the paper
the spaces of bounded and compact linear operators on $A^p_{\alpha}$ are denoted by $\BB(A^p_{\alpha})$ and $\KK(A^p_{\alpha})$, respectively.
 Moreover,  if  $T:A_\al^p\to A_\al^p$ is a linear map, we write 
$\|T\|_{\alpha,p}=\sup_{\|f\|_{\al,p}\le 1}\|Tf\|_{\al,p}$,
and we refer to this quantity as the operator norm of $T$ on $A^p_{\alpha}$, despite that $A^p_{\alpha}$ is not a normed space for~{$0<p<1$.}

The primary aim of this paper is to study the boundedness of compositions (products) of analytic paraproducts acting on $A^p_\alpha$. In order to provide some intuition and motivation for this circle of problems, let us have a brief look at compositions of two such paraproducts. Clearly, $M_g^2=M_gM_g$ is bounded on these spaces if and only if $g\in H^\infty$ and we shall show (Theorem \ref{thm:main:introduction}) that the same holds for $S_g^2$. On the other hand, it turns out that $T_g^2\in \BB(A^p_{\alpha})$  if and only if $T_g\in \BB(A^p_{\alpha})$  (Theorem \ref{thm:algebraimpliesginBloch:introduction}). Regarding mixed products, a simple computation reveals that  
$S_gT_g=T_gM_g=\frac{1}{2}T_{g^2}$, so that
both compositions are bounded on $A^p_\alpha$  if and only if $g^2\in \B$, when $\alpha>-1$, or $g^2\in BMOA$, when $\alpha=-1$. This last condition  is strictly stronger than $g\in \B$ or $g\in BMOA$, respectively
 (see Proposition~\ref{prop:Bn:nested} below).  The compositions in reversed order raise additional problems because they  cannot be expressed as a single paraproduct. They can be related to the previous ones using~(\ref{eqn:formula}):
\begin{equation}\label{2letter}
M_gT_g= S_gT_g+T^2_g,\quad  T_gS_g=S_gT_g-T^2_g-g(0)(g-g(0))\delta_0,
\end{equation} 
where $\delta_0f=f(0)$.
Intuitively, from above it appears that $S_gT_g=\frac{1}{2}T_{g^2}$ is the \lq\lq dominant term\rq\rq\,  in both sums, but a  priori it is not clear whether such sums are affected by cancellation or not.  Thus we are led in a natural way to consider sums of compositions of paraproducts  rather than only compositions.   A similar situation occurs when dealing with $M_gS_g$ and  $S_gM_g$.\\
Due to these preliminary observations we turn our attention to the full algebra $\A_g$  
generated by the operators $M_g$, $S_g$, and $T_g$.
 The operators in  $\A_g$
will be called {\em $g$-operators}.  In this general  framework  we begin by showing that any $g$-operator $L$  has a representation
\begin{equation}\label{eqn:general:expression:operator}
L=\sum_{k=0}^nS_g^kT_gP_k(T_g)+S_gP_{n+1}(S_g)+g(0)P_{n+2}(g-g(0))\,\delta_0,
\end{equation}
for some $n\in\N$, where the $P_k$'s are polynomials.
This representation is essentially unique, see~{\S\ref{sec:Vector space structure}}. If $P_k=0$, for $0\le k\le n+1$, we will say that $L$ is a {\em trivial operator}.
With this representation in hand we can derive  a fairly surprising necessary condition for the boundedness of general operators in this algebra.  
\begin{theorem}\label{thm:algebraimpliesginBloch:introduction} $\mbox{ }$
Let $g\in \H(\D)$, $0<p<\infty$ and $\alpha\ge -1$.
Let $L$ be a $g$-operator written in the form \eqref{eqn:general:expression:operator}. Then:
\begin{enumerate}[label={\sf\alph*)},ref={\sf\alph*)},topsep=3pt, leftmargin=*,itemsep=3pt] 
\item \label{item:thm:algebraimpliesginBloch:introduction:1}
If $L$ is a non-trivial operator and $L\in\BB(A^p_{\alpha})$, then $T_g\in\BB(A^p_{\alpha})$, that is, 
$g\in\B$, when $\alpha>-1$,  and $g\in BMOA$, when  $\alpha=-1$. 
\item \label{item:thm:algebraimpliesginBloch:introduction:2}
If $L$ is a non-zero trivial operator, then $L\in\BB(A^p_{\alpha})$ if and only if 
$g^{deg P_{n+2}}\in A^p_\alpha$.
\end{enumerate}
\end{theorem}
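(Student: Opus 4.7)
For part \ref{item:thm:algebraimpliesginBloch:introduction:2}, I would argue directly. A non-zero trivial $L$ is the rank-one operator
$Lf = f(0)\,g(0)\,P_{n+2}(g-g(0))$, where necessarily $g(0)\neq 0$ and $P_{n+2}\not\equiv 0$ (otherwise $L=0$). Then $L\in\BB(A^p_\alpha)$ is equivalent to the polynomial $h:=g(0)\,P_{n+2}(g-g(0))$ belonging to $A^p_\alpha$. Expanding $h$ in powers of $g$, the top-degree term has degree $d:=\deg P_{n+2}$ with nonzero coefficient, while the lower powers are dominated by $|g|^d$ on $\{|g|\ge 2|g(0)|+1\}$ and are bounded elsewhere; a standard comparison then yields $h\in A^p_\alpha$ if and only if $g^d\in A^p_\alpha$.

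Part \ref{item:thm:algebraimpliesginBloch:introduction:1} is the substantial case. My plan is to probe $L$ with a family of normalized test functions $\{f_a\}_{a\in\D}$ of the form
\[
f_a(z)=\frac{(1-|a|^2)^\beta}{(1-\overline{a}z)^\gamma},
\]
with the exponents tuned so that $\|f_a\|_{\alpha,p}\lesssim 1$ uniformly in $a$, and to track the leading asymptotics of $Lf_a$ as $|a|\to 1$. Using the identity $S_g = M_g - T_g - g(0)\,\delta_0$ (a rewriting of \eqref{eqn:formula}), each summand $S_g^k\,T_g\,P_k(T_g)$ in \eqref{eqn:general:expression:operator} can be re-expressed as a combination of compositions of ``multiplication by a polynomial in $g$'' with iterates of $T_g$. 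Applied to $f_a$, the dominant contribution to $Lf_a$ should have the shape $R(g(a))\,g'(a)\,E_a(\cdot)$, where $E_a$ is an explicit function whose $A^p_\alpha$-norm grows like a fixed power of $(1-|a|^2)^{-1}$, and $R$ is a polynomial whose coefficients are explicitly determined by those of $P_0,\dots,P_{n+1}$.

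The key point, and the main obstacle, is to verify that $R\not\equiv 0$ whenever $L$ is non-trivial. This is precisely where the essential uniqueness of the representation~\eqref{eqn:general:expression:operator} mentioned in \S\ref{sec:Vector space structure} enters: if $R$ were identically zero, one could rewrite $L$ in a strictly simpler form, contradicting uniqueness. Granting $R\not\equiv 0$, I would select $a_j\to\partial\D$ with $|R(g(a_j))|$ bounded below (handling separately the case where $g$ is constant, for which $T_g=0$ is automatically bounded) and read off from $\|Lf_{a_j}\|_{\alpha,p}\lesssim 1$ the Bloch estimate $(1-|a_j|^2)\,|g'(a_j)|\lesssim 1$ in the Bergman range $\alpha>-1$, which is exactly $g\in\B$. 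The Hardy case $\alpha=-1$ should be handled analogously, but requires a richer family of test functions to recover the Carleson measure estimate for $|g'(z)|^2(1-|z|^2)\,dA(z)$ characterising $g\in BMOA$. The hardest step is the combinatorial bookkeeping that isolates the non-cancelling $T_g$-term and proves that $R$ does not vanish identically.
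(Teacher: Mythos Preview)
Your treatment of part~\ref{item:thm:algebraimpliesginBloch:introduction:2} is fine and in fact more direct than the paper's: the paper applies the dilation machinery (Proposition~\ref{prop:norm:dilations}) together with Remark~\ref{rem:real-variable:boundedness}, whereas you observe straight away that a rank-one operator $f\mapsto f(0)h$ is bounded iff $h\in A^p_\alpha$, and then compare $P_{n+2}(g_0)$ with its leading monomial. Both are correct; yours is shorter.

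Part~\ref{item:thm:algebraimpliesginBloch:introduction:1}, however, has a genuine gap. Your plan hinges on the claim that the ``dominant contribution'' of $Lf_a$ near $a$ has the form $R(g(a))\,g'(a)\,E_a$, with $E_a$ explicit (depending only on $a$, not on $g$). This is simply false for terms containing higher iterates of $T_g$. Take the baby case $L=T_g^2$, which is non-trivial with $P_0(z)=z$, $P_0(0)=0$. Then $(Lf_a)'(a)=g'(a)\,(T_gf_a)(a)$, and $(T_gf_a)(a)=\int_0^a f_a\,g'$ is an integral you cannot evaluate or even bound without already knowing something like $g\in\B$. There is no polynomial $R$ and no explicit $E_a$ here; in particular your proposed $R$ (built from the constant terms $P_k(0)$) is identically zero for $L=T_g^2$, yet the operator is non-trivial, so the step ``$R\not\equiv0$ by uniqueness of~\eqref{eqn:general:expression:operator}'' also breaks down. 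The same obstruction appears whenever any $P_k$ has degree $\ge1$: the iterated-$T_g$ pieces cannot be controlled a priori, so you cannot separate a ``main term'' from ``lower order'' ones. (The paper does use exactly your test-function idea later, in the proof of Theorem~\ref{thm:main:introduction}~\ref{item:thm:main:introduction:3}, but only \emph{after} $g\in\B$ has been established by the present theorem; see Lemma~\ref{lem:Tg-pointwise-est}.)

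The paper's route is entirely different and avoids this circularity. It first proves the self-improving estimate $\|T_g\|_{\alpha,p}^n\le c_n\|T_g^n\|_{\alpha,p}$ (Proposition~\ref{prop:T_g-powers}), then computes the iterated commutator $[L_{g_r},T_{g_r}]_n$: by the formulas~\eqref{eqn:iterated:commutators:Sgk:Tg}--\eqref{eqn:iterated:commutators:Sgk:Tg:0} this kills all $S_g^k$ factors and yields $Q(T_{g_r})+\text{(trivial)}$ with $Q$ a polynomial of degree $>n$. Since $\|[L_{g_r},T_{g_r}]_n\|\lesssim\|L_g\|\,\|T_{g_r}\|^n$ by~\eqref{eqn:formula:iterated:commutator} and Proposition~\ref{prop:norm:dilations}, the power-comparison in Remark~\ref{rem:real-variable:boundedness} forces $\sup_r\|T_{g_r}\|<\infty$, hence $T_g\in\BB(A^p_\alpha)$. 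The commutator trick is the missing idea: it converts the mixed $S_g^kT_g^j$ structure into a pure polynomial in $T_g$, where the self-improving inequality can bite.
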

Note that the result applies directly to $T_gS_g$ and $M_gT_g$ via \eqref{2letter} and  justifies the  intuition that these operators 
 are bounded on   $A^p_\alpha$  if and only if $g^2\in \B$, when $\alpha>-1$, or $g^2\in BMOA$, when $\alpha=-1$. In fact, 
 in~{\S\ref{2letter-words}}  we provide a complete characterization of the boundedness of compositions of two analytic paraproducts (see also Corollary \ref{prop:boundedness:two:letters:operators} below).
 The theorem can be used to characterize the boundedness of more complicated $g$-operators.
In addition, it provides a crucial preliminary step in the proof of our characterization of boundedness of certain $g$-operators which we now state.
\begin{theorem}\label{thm:main:introduction}
Let $g\in \H(\D)$, $0<p<\infty$ and $\alpha\ge -1$. 
Let $L$ be a $g$-operator written in the form \eqref{eqn:general:expression:operator}. Then: 
\begin{enumerate}[label={\sf\alph*)},ref={\sf\alph*)},
topsep=3pt, leftmargin=*,itemsep=3pt] 
\item\label{item:thm:main:introduction:1}
If $P_{n+1}\ne 0$,
$L\in \BB(A^p_{\alpha})$ if and only if 
$g\in H^\infty$.
\item\label{item:thm:main:introduction:2} 
If $P_{n+1}=0$ and $P_n=1$, $L\in\BB(A_\al^p)$ if and only if  $T_{g^{n+1}}\in\BB(A_\al^p)$, or equivalently,  
$g^{n+1}\in\B$, when $\al>-1$, and $g^{n+1}\in BMOA$, when $\al=-1$.
\item\label{item:thm:main:introduction:3} 
If $\alpha>-1$, $P_{n+1}=0$, and  $P_n(0)\ne 0$, $L\in \BB(A^p_{\alpha})$ if and only if $g^{n+1}\in \B$.
 \end{enumerate}
\end{theorem}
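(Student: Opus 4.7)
The proof hinges on the algebraic identities
\[
S_g^k T_g=\tfrac{1}{k+1}T_{g^{k+1}}\;(k\geq 0),\qquad S_g^j f(z)=\int_0^z f'(\zeta)g(\zeta)^j\,d\zeta\;(j\geq 1),
\]
proved by induction using $(T_g f)'=fg'$ and $(S_g f)'=f'g$. In particular $S_g P_{n+1}(S_g)=S_h$ with $h:=g\cdot P_{n+1}(g)\in\H(\D)$. For \emph{sufficiency} in (a), $g\in H^\infty$ makes $M_g$, $S_g$, and $T_g$ all bounded (by Proposition~\ref{prop:SM-bounded-compact} and the inclusion $H^\infty\subset\B\cap BMOA$), so $L$ is bounded as a finite sum of products of bounded operators. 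For sufficiency in (b) and (c), the nesting of the classes $\B_m:=\{\phi:\phi^m\in\B\}$ (Proposition~\ref{prop:Bn:nested}) turns $g^{n+1}\in\B$ (resp.\ $BMOA$) into $g^k\in\B$ for all $k\leq n+1$. Hence every $T_{g^{k+1}}$ is bounded, and, combined with the boundedness of polynomials in $T_g$, this gives boundedness of each summand of
\[
L=\sum_{k=0}^n\tfrac{1}{k+1}T_{g^{k+1}}P_k(T_g)+g(0)P_{n+2}(g-g(0))\delta_0.
\]

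\emph{Necessity in (a).} By Theorem~\ref{thm:algebraimpliesginBloch:introduction}\ref{item:thm:algebraimpliesginBloch:introduction:1}, $g\in\B$ (resp.\ $BMOA$) and $T_g$ is bounded. Split $L=A+S_h+R$ with $A=\sum_k S_g^k T_g P_k(T_g)$ and $R$ the rank-one trivial term. The plan is to test $L$ on functions $f$ vanishing at $0$ with $f'$ concentrated near a chosen point $w\in\D$: the $T_g$-driven piece $Af$ is then controlled by $\|g\|_\B\cdot\|f\|_{\al,p}$, while $(S_h f)(z)$ inherits the value $h(w)$ on the region where $f'$ lives, so that boundedness of $L$ forces $h\in H^\infty$. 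Since $P_{n+1}\not\equiv 0$, the leading nonzero coefficient of $wP_{n+1}(w)$ dominates this polynomial at infinity, and hence $h\in H^\infty$ propagates to $g\in H^\infty$.

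\emph{Necessity in (b) and (c).} Again $g\in\B$. Arguing by induction on $n$: for $n=0$, $L=T_g P_0(T_g)+R$ is bounded iff $T_g$ is bounded iff $g\in\B$. For $n\geq 1$, the task is to isolate the top-order contribution $\tfrac{1}{n+1}T_{g^{n+1}}P_n(T_g)$ from the lower-order block $\sum_{k<n}\tfrac{1}{k+1}T_{g^{k+1}}P_k(T_g)$. This is done through normalized reproducing-kernel test functions $f_w(z)=(1-|w|^2)^c(1-\bar w z)^{-s}$: the lower-order terms only involve the powers $g^{k+1}$ with $k+1\leq n$ and fit, each individually, into a $g$-operator of the form covered by the inductive hypothesis; the top term, meanwhile, produces a quantity comparable to $|g(w)|^{n+1}$ (using $P_n=1$ in (b), or $P_n(0)\neq 0$ in (c) to secure a non-zero leading coefficient, the hypothesis $\al>-1$ entering in (c) when handling the extra polynomial factor via standard Bergman-space estimates). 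This forces $T_{g^{n+1}}\in\BB(A_\al^p)$, i.e.\ $g^{n+1}\in\B$ (resp.\ $BMOA$). The main obstacle is this isolation, since $g\in\B$ does not imply $g^k\in\B$ for $k\geq 2$ and a naive triangle-inequality separation fails; the resolution relies on the faster boundary growth of $g^{n+1}$ versus lower powers on the chosen family of test functions, combined with the nesting of the classes $\B_m$.
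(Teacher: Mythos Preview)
Your sufficiency arguments are essentially the paper's, and they are fine. The necessity arguments, however, have a common gap that the paper resolves in a way you do not mention: the dilation technique of Proposition~\ref{prop:norm:dilations}.

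\medskip

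\textbf{Part (a).} You split $L=A+S_h+R$ with $A=\sum_k\tfrac1{k+1}T_{g^{k+1}}P_k(T_g)$ and claim that on suitable test functions ``the $T_g$-driven piece $Af$ is controlled by $\|g\|_\B\|f\|_{\alpha,p}$''. But at this stage you only know $T_g\in\BB(A^p_\alpha)$; the operators $T_{g^{k+1}}$ for $k\ge 1$ may well be unbounded, so there is no reason $\|Af\|_{\alpha,p}$ should be finite at all, let alone controlled by $\|g\|_\B$. The paper sidesteps this by passing to $g_r$, $r<1$: then every $T_{g_r^{k+1}}$ is compact (indeed $g_r\in\H(\overline\D)\subset VMOA$), so $L_{g_r}=M_{g_rP_{n+1}(g_r)}+K_r$ with $K_r$ compact. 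Testing on the normalized reproducing kernels $h_\lambda$, which tend weakly to $0$ as $|\lambda|\to 1$, kills $K_rh_\lambda$ and leaves $|g_r(\zeta)P_{n+1}(g_r(\zeta))|\le\|L_g\|_{\alpha,p}$ via the Poisson/Berezin reproducing property. Your ``$f'$ concentrated near $w$'' heuristic is in the right spirit, but without the compactness separation it does not yield a rigorous bound on $A$.

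\medskip

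\textbf{Parts (b) and (c).} Your induction on $n$ does not close. To invoke the inductive hypothesis you would need the lower block $\sum_{k<n}\tfrac1{k+1}T_{g^{k+1}}P_k(T_g)$ to be bounded, but all you know is that the full $L$ is bounded; you cannot subtract. You yourself flag this (``a naive triangle-inequality separation fails''), but the fix you offer---``faster boundary growth of $g^{n+1}$ versus lower powers''---is not an argument, because a priori $g^{k+1}\notin\B$ for any $k\ge 1$, so there is no growth hierarchy to appeal to. The paper does not induct. For (b) it again passes to $g_r$, uses the nesting inequality $\|T_{g_r^{k+1}}\|\lesssim\|T_{g_r^{n+1}}\|^{(k+1)/(n+1)}$ from Proposition~\ref{prop:Bn:nested}, and obtains a polynomial inequality of the form
\[
\|T_{g_r^{n+1}}\|\;\le\; c\Bigl(\|L_g\|+\sum_{k<n}\|T_{g_r^{n+1}}\|^{(k+1)/(n+1)}\Bigr),
\]
which forces $\sup_r\|T_{g_r^{n+1}}\|<\infty$. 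For (c) the paper uses a different device: a one-parameter family $f_{\gamma,\lambda}(z)=z(1-\bar\lambda z)^{-\gamma}$ with the free parameter $\gamma$ chosen large (Lemma~\ref{lem:Tg-pointwise-est}); the pointwise bound $|T_g^kf_{\gamma,\lambda}(\lambda)|\le\|g\|_\B^k\gamma^{-k}|\lambda|^{-k}(1-|\lambda|^2)^{-\gamma}$ makes the $T_g$-polynomial factors in $P_n(T_g)$ subordinate when $\gamma$ is large, and this is precisely where $\alpha>-1$ is used (via Bergman kernel/norm asymptotics). Your proposal names the right test-function family but misses the essential role of the extra parameter $\gamma$ in absorbing the lower-order terms.
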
 
We have not been able to extend part c) of this theorem to  the  $H^p$-case. One direction follows directly from  Proposition~\ref{prop:Bn:nested}, but the remaining one is, in  our opinion, an interesting and challenging open question.
\begin{question}\label{the question} Let  $g\in \H(\D)$, $0<p<\infty$,  and let $L$ be a $g$-operator written in the form \eqref{eqn:general:expression:operator} with $P_{n+1}=0$, and  $P_n(0)\ne 0$, which is  bounded on $H^p$. Is it true that  $g^{n+1}\in BMOA$? \end{question}

When dealing with operators in  $\A_g$, an initial hurdle can be easily recognized, namely that these operators  are formally defined as sums of products of possibly unbounded operators on the spaces in question.  One way to overcome this difficulty is to consider dilations of the symbol $g$, which are defined, for $\lambda\in\overline{\D}$, by $g_\lambda(z)=g(\lambda z)$. In Proposition~\ref{prop:norm:dilations} we prove that if  
$L_g \in \A_g\cap \BB(A^p_{\alpha})$ then $\|L_{g_{\lambda}}\|_{\alpha,p}\le \|L_g\|_{\alpha,p}$ for all $\lambda\in\overline{\D}$. This fact  will be repeatedly used 
in the proofs of the results stated above. Other key ingredients for the proof of Theorem \ref{thm:algebraimpliesginBloch:introduction} are the estimates 
\[
\|T_g\|_{\alpha,p}^n\le c_n\|T_g^n\|_{\alpha,p},
\]
which will be established in 
 Proposition~\ref{prop:T_g-powers},
together with the analysis of iterated commutators of $T_g$ and $S^k_g$, $k\in\N$. A sample of this set of ideas can be found in Corollary~\ref{co:iteratedcom} below. The proof of Theorem \ref{thm:main:introduction} is somewhat more involved, in particular, it makes use of the boundary behaviour of $A_\alpha^p$-valued functions of the form $\lambda\to L_{g_\lambda}f$, $\lambda\in \D$,  
$f\in A_\alpha^p$.
\medskip

In order to discuss the class of $g$-operators covered by 
Theorem~{\ref{thm:main:introduction} \ref{item:thm:main:introduction:2}} it is convenient to introduce  the following terminology. An {\em $n$-letter $g$-word} is a   $g$-operator of the form $L=L_1\cdots L_n$, where each $L_j$ is either $M_g$, $S_g$ or $T_g$.  For $n\in\N$, let $\A^{(n)}_g$ be the  linear span of $g$-words with no more than $n$ letters and define 
the {\em order} of a $g$-operator $L$ to be the least $n\in\N$
such that $L\in\A^{(n)}_g$. It turns out that  if $L\in\A^{(n)}_g$ then  the words involved in its
representation~{\eqref{eqn:general:expression:operator}} have length at most $n$. For example,  $g$-operators of order two have the form
\begin{equation}\label{order2}
L=a_1T_g+a_2T_g^2+a_3 S_g T_g+a_4 S_g+a_5 S_g^2+ g(0)P(g-g(0))\delta_0,
\end{equation}
where  $a_j\in\C,~1\le j\le 5$, and $P$ is a polynomial of degree  smaller than $2$. These operators are covered by  Theorem~\ref{thm:main:introduction}, and  we have the following complete characterization of their boundedness.

\begin{corollary}\label{prop:boundedness:two:letters:operators}
Let $g\in \H(\D)$, $0<p<\infty$ and $\alpha\ge -1$. If $L$ is a $g$-operator of order two written in the form \eqref{order2}, then:
\begin{enumerate}[label={\sf\alph*)},ref={\sf\alph*)},
topsep=3pt,leftmargin=*,itemsep=3pt] 
\item When either $a_4\ne0$ or $a_5 \ne0$, $L\in\BB(A^p_{\alpha})$ if and only if $g\in H^\infty$.
\item When $a_3\ne 0$ and $a_4=a_5=0$,  $L\in\BB(A^p_{\alpha})$  if and only if  $g^2\in \B$, for $\alpha>-1$, and $g^2\in BMOA$, for $\alpha=-1$.
\item When $a_3=a_4=a_5=0$ and either $a_1\ne0$ or $a_2\ne0$,  $L\in\BB(A^p_{\alpha})$  if and only if $g\in \B$, for $\alpha>-1$, and $g\in BMOA$, for $\alpha=-1$.
\end{enumerate}
\end{corollary}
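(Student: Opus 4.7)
My plan is to deduce the corollary directly from Theorem~\ref{thm:main:introduction} and Theorem~\ref{thm:algebraimpliesginBloch:introduction} by first recognizing~\eqref{order2} as an instance of the canonical form~\eqref{eqn:general:expression:operator}. Taking $n=1$, one reads off $P_0(x)=a_1+a_2 x$, $P_1(x)=a_3$, $P_{n+1}(x)=P_2(x)=a_4+a_5 x$, and $P_{n+2}=P$; the essential uniqueness of the representation, discussed in~\S\ref{sec:Vector space structure}, means there is nothing arbitrary in this identification, so each case of the corollary will translate into the hypothesis of one of the main theorems.

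For case~(a), I would note that $a_4\ne 0$ or $a_5\ne 0$ is precisely $P_{n+1}\ne 0$, so Theorem~\ref{thm:main:introduction}\ref{item:thm:main:introduction:1} immediately yields $L\in\BB(A^p_\alpha)$ iff $g\in H^\infty$. For case~(b), $P_{n+1}=0$ and $P_n=a_3$ is a non-zero constant; my plan is to normalize by multiplying $L$ by $1/a_3$, which preserves boundedness, reducing to the hypothesis $P_n=1$ of Theorem~\ref{thm:main:introduction}\ref{item:thm:main:introduction:2}; that theorem then delivers boundedness iff $T_{g^{n+1}}=T_{g^2}\in\BB(A^p_\alpha)$, i.e.\ $g^2\in\B$ for $\alpha>-1$ and $g^2\in BMOA$ for $\alpha=-1$. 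For case~(c), the non-trivial part of $L$ is $a_1 T_g+a_2 T_g^2$, which is non-zero under the hypothesis; Theorem~\ref{thm:algebraimpliesginBloch:introduction}\ref{item:thm:algebraimpliesginBloch:introduction:1} then forces $T_g\in\BB(A^p_\alpha)$ whenever $L$ is bounded, i.e.\ $g\in\B$ (resp.\ $BMOA$). For the converse, $T_g\in\BB(A^p_\alpha)$ immediately gives $T_g^2\in\BB(A^p_\alpha)$, and the trivial rank-one summand is bounded since $g^{\deg P}\in A^p_\alpha$: the Bloch space embeds into every $A^p_\alpha$ with $\alpha>-1$, while $BMOA\subset\bigcap_{p<\infty}H^p$ by John--Nirenberg, so powers of $g$ inherit the required integrability.

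The whole argument is essentially bookkeeping once the dictionary between~\eqref{order2} and~\eqref{eqn:general:expression:operator} is written down. The only mild obstacle I anticipate is the normalization in case~(b): Theorem~\ref{thm:main:introduction}\ref{item:thm:main:introduction:2} is stated with leading polynomial exactly~$1$, so one has to observe that $P_n=a_3$ can be absorbed into a non-zero scalar multiple of~$L$ without affecting boundedness. No genuinely new analysis beyond the main theorems already proved is required.
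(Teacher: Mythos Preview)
Your proposal is correct and matches the paper's own approach: the corollary is presented there as an immediate consequence of Theorem~\ref{thm:main:introduction} (with Theorem~\ref{thm:algebraimpliesginBloch:introduction} in the background), obtained exactly by reading off the polynomials $P_0,\dots,P_3$ from~\eqref{order2} with $n=1$. Your handling of the three cases---including the scalar normalization in~(b) to force $P_n=1$, and the use of Theorem~\ref{thm:algebraimpliesginBloch:introduction}\ref{item:thm:algebraimpliesginBloch:introduction:1} plus the elementary converse in~(c)---is precisely what is needed.
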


On the other hand, our main result does not cover $g$-operators  with $P_{n+1}=0$ and $P_n(0)=0$ in the representation \eqref{eqn:general:expression:operator}. An  example of this type,  where the condition for the boundedness  is different,  follows from the second identity in~{\eqref{2letter}}. This together with $S_gT_g=\tfrac12T_{g^2}$  implies
\[
\tfrac14T_{g^2}^2=S_g(T_gS_g)T_g=S_g^2T_g^2-S_gT_g^3,
\]
{\em i.e.}\ the operator on the right  is the representation \eqref{eqn:general:expression:operator} of $\tfrac14T_{g^2}^2$.   In view of Theorem \ref{thm:main:introduction}  one might expect  that the presence of $S_g^2$ forces the boundedness of $T_{g^3}$, but by  Theorem \ref{thm:algebraimpliesginBloch:introduction} this operator is bounded on $A_\alpha^p$ if and only if  $g^2\in \B$, for $\alpha>-1$, and $g^2\in BMOA$, for $\alpha=-1$.\\ There are also $g$-operators of order $3$  with $P_{n+1}=0$ and $P_n(0)=0$ in the representation \eqref{eqn:general:expression:operator}. 
The simplest example is the $3$-letter-word $S_gT_g^2$ and in this case the situation differs even more dramatically to the one described in Theorem \ref{thm:main:introduction}. 
The following result shows that the boundedness of such $g$-operators cannot be characterized with conditions of the form $g\in H^\infty$, or $g^n\in \B\, (BMOA)$, with $n\in \N$. 

As usual, we denote by $\log$  the principal branch of the logarithm on 
$\C\setminus(-\infty,0]$, that is, $\log 1=0$. For an open set $U\subset \C$  and an analytic function   $h:U\to\C\setminus(-\infty,0],~\beta\in\C$, we define
$h^\beta=\exp(\beta\log h)$.

\begin{theorem}\label{thm:counterexamples}
Consider the function $g:\D\to\C\setminus(-\infty,0]$ defined by
\[
g(z)=\log\biggl(\frac{e}{1-z}\biggr) \qquad(z\in\D).
\]
 Then: 
\begin{enumerate}[label={\sf\alph*)},ref={\sf\alph*)},
topsep=3pt,leftmargin=*,itemsep=3pt] 
\item  $g\in BMOA$, but  for any $\alpha\ge -1,~p>0$,  we have $S_gT_g^2\notin \BB(A_\alpha^p)$.
\item For  $\frac12<\beta<\frac23$, $g^{2\beta}\notin\B$ (and so $g^{2\beta}\notin BMOA$), but  
 $S_{g^{\beta}}T^2_{g^{\beta}}\in\KK(A^p_{\alpha})$, for any $\alpha\ge-1$ and $p>0$.
\end{enumerate}
\end{theorem}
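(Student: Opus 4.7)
The two parts of the theorem call for different strategies, so I would treat them separately.

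\emph{Part (a).} The membership $g\in BMOA$ is classical for $g(z)=\log(e/(1-z))$. To show $S_g T_g^2\notin\BB(A^p_\alpha)$ I would test on the family $f_s:=g^s$ and let $s\to\infty$. Since $g(0)=1$, one obtains $T_g(g^k)=(g^{k+1}-1)/(k+1)$ by direct integration, and iterating this with one application of $S_g$ gives
\[
S_g T_g^2 g^s = \frac{g^{s+3}}{(s+1)(s+3)} + Q_s(g),
\]
where $Q_s$ is an explicit polynomial in $g$ of degree at most $2$ with coefficients of order $O(1/s)$. The norms are estimated via the boundary distribution of $g$: since $|g(e^{i\theta})|\asymp\log(1/|\theta|)$ near $\theta=0$, a direct calculation yields $\|g^s\|_{H^p}^p\asymp\Gamma(sp+1)$, and analogously $\|g^s\|_{\alpha,p}^p\asymp\Gamma(sp+1)/C_\alpha^{sp+1}$ for $\alpha>-1$. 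By Stirling's formula, $\|g^{s+3}\|_{\alpha,p}/\|g^s\|_{\alpha,p}\asymp s^3$, hence $\|S_g T_g^2 g^s\|_{\alpha,p}/\|g^s\|_{\alpha,p}\gtrsim s\to\infty$, proving unboundedness.

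\emph{Part (b).} Let $h:=g^\beta$. The failure $h^2\notin\B$ is direct: on the segment $z\in(0,1)$, $(1-|z|^2)(h^2)'(z)=2\beta(1+z)(\log(e/(1-z)))^{2\beta-1}$, which is unbounded because $2\beta-1>0$; since $BMOA\subset\B$, also $h^2\notin BMOA$. For compactness of $S_h T_h^2$ I would use the identity $S_h T_h^2 = M_h T_h^2 - T_h^3$ which follows from \eqref{eqn:formula}. The bound $(1-|z|^2)|h'(z)|\asymp(\log\tfrac{1}{1-|z|})^{\beta-1}\to 0$ places $h$ in the little Bloch space (respectively VMOA in the Hardy case), so $T_h\in\KK(A^p_\alpha)$, and hence $T_h^3\in\KK(A^p_\alpha)$. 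It remains to prove $M_h T_h^2\in\KK(A^p_\alpha)$. Via the Littlewood--Paley norm equivalence combined with
\[
(M_h T_h^2 f)'(z) = h'(z)\bigl(T_h^2 f(z) + h(z)\, T_h f(z)\bigr),
\]
the question reduces to controlling two integrals against the measure $d\nu:=|h'|^p(1-|z|^2)^{p+\alpha}\,dA$, which is itself vanishing Carleson for $A^p_\alpha$ since $(1-|z|^2)|h'(z)|\to 0$.

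The contribution $\int|T_h^2 f_n|^p\,d\nu$ vanishes for weakly null $f_n$ in $A^p_\alpha$, because $T_h^2 f_n$ is bounded in $A^p_\alpha$ and converges to $0$ uniformly on compacta. The main obstacle is the piece $\int|h\, T_h f_n|^p\,d\nu$: the measure $|hh'|^p(1-|z|^2)^{p+\alpha}\,dA$ is not Carleson for $A^p_\alpha$, so one cannot treat $T_h f_n$ as a generic $A^p_\alpha$-function. Using the pointwise bound $|T_h f(z)|\lesssim\|f\|_{\alpha,p}(1-|z|)^{-(\alpha+2)/p}(\log\tfrac{1}{1-|z|})^{\beta-1}$ together with $|h(z)|\asymp(\log\tfrac{1}{1-|z|})^\beta$, the effective logarithmic exponent becomes $3\beta-2$, which is strictly negative precisely when $\beta<2/3$. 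The hardest step is converting this pointwise decay into a genuine $L^p$-compactness statement; I expect this to require either a Luecking-type test condition restricted to the subspace $T_h(A^p_\alpha)$, or a further integration-by-parts trick (exploiting the identity $S_h T_h^2 f=\tfrac12 h^2 T_h f-\tfrac16 T_{h^3}f$) that absorbs the singularity of the non-Carleson measure via the cancellation between the two summands.
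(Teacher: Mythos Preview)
Your argument is correct and complete, though it differs from the paper's. The paper tests on $f_{r,k}(z)=(1-rz)^{-k}$ and estimates $(S_gT_g^2f_{r,k})'(r)$ from below, obtaining an extra factor $\log\frac{e}{1-r}$ that contradicts the standard pointwise bound for $A^p_\alpha$-functions as $r\to1^-$. Your route via $g^s$ and Gamma-function asymptotics is a legitimate alternative; the norm estimate $\|g^s\|^p_{\alpha,p}\asymp\Gamma(sp+1)/C_\alpha^{sp+1}$ requires a short (routine) computation you should include, and for $0<p<1$ the quasi-triangle inequality needs a word, but nothing is wrong.

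\textbf{Part (b).} Your check that $g^{2\beta}\notin\B$ is fine, as is the reduction $S_hT_h^2=M_hT_h^2-T_h^3$ and the compactness of $T_h^3$. The genuine gap is exactly where you flag it: the compactness of $M_hT_h^2$. Neither of your two suggested routes is carried out, and the second is unpromising as stated: in the identity $S_hT_h^2 f=\tfrac12 h^2 T_h f-\tfrac16 T_{h^3}f$, the operator $T_{h^3}=T_{g^{3\beta}}$ is \emph{unbounded} on $A^p_\alpha$ for every $\beta>1/3$ (since $(1-|z|)|(g^{3\beta})'(z)|\asymp(\log\tfrac1{1-|z|})^{3\beta-1}\to\infty$). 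You would therefore be seeking cancellation between two individually unbounded operators, which is harder, not easier, than the original problem.

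The paper's device is a different algebraic factorisation that avoids cancellation entirely. From $S_hT_h^2=\tfrac12 T_{g^{2\beta}}T_{g^\beta}$ one writes $(T_{g^{2\beta}}T_{g^\beta}f)'=2\beta g'F$ with $F=g^{2\beta-1}T_{g^\beta}f$, and then
\[
F'=(g^{1-\varepsilon})'\Bigl[\tfrac{2\beta-1}{1-\varepsilon}\,g^{2\beta-2+\varepsilon}\,T_{g^\beta}f+\tfrac{\beta}{1-\varepsilon}\,g^{3\beta-2+\varepsilon}\,f\Bigr]
\]
for any $\varepsilon\ne1$. This yields
\[
S_{g^\beta}T_{g^\beta}^2=\tfrac{(2\beta-1)\beta}{1-\varepsilon}\,T_gT_{g^{1-\varepsilon}}M_{g^{2\beta-2+\varepsilon}}T_{g^\beta}+\tfrac{\beta^2}{1-\varepsilon}\,T_gT_{g^{1-\varepsilon}}M_{g^{3\beta-2+\varepsilon}}.
\]
Choosing $0<\varepsilon<\min(2-3\beta,1)$, which is possible precisely when $\beta<2/3$, every $T$-symbol except $g$ itself has exponent strictly below $1$, hence lies in $VMOA$ (so the corresponding $T$ is compact), while $T_g$ is bounded since $g\in BMOA$; and every $M$-symbol has \emph{negative} exponent, hence lies in $H^\infty$ because $g$ is bounded away from zero on $\D$. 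Each summand is then a composition of bounded operators containing at least one compact factor, hence compact. The missing idea is this redistribution of the total exponent $3\beta$ among several factors, using the free parameter $\varepsilon$ to push all multiplier exponents below zero and all integral-operator exponents below one.
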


The paper is organized as follows.
 Section \ref{sec:spaces:of:symbols} contains some preliminary results concerning the Bloch space and $BMOA$, in particular  the condition  $g^k\in \B\, (BMOA)$, for some $k\in \N$.
In Section \ref{sec:algebra:of:g-operators} we   study the vector space structure of the algebra $\A_g$ and prove the representation \eqref{eqn:general:expression:operator}.
Section \ref{sec:boundedness:g-operators} is devoted to the proof of our main results, 
Theorems~\ref{thm:algebraimpliesginBloch:introduction} 
and~\ref{thm:main:introduction}. Finally, in the last section we prove Theorem~\ref{thm:counterexamples}.

As usual, $\N$ is the set of positive integers and $\T=\{z\in\C:|z|=1\}$ is the unit circle. For $\lambda\in\C$ and $r>0$, $D(\lambda,r)=\{z\in\C:|z-\lambda|<r\}$ is the open disc centered at $\lambda$ with radius $r$. For two non-negative functions $A$ and $B$, 
$A\lesssim B$ ($B\gtrsim A$) means that there is a finite positive constant $C$, independent of the variables involved, which satisfies $A\le C\,B$. Moreover, we will write $A\simeq B$ when $A\lesssim B$ and $B\lesssim A$. 

\section{The spaces of symbols}\label{sec:spaces:of:symbols}
  
In this section we will recall and prove  some preliminary results about $BMOA$ and the Bloch space.
For any $a\in\D$, define $\phi_a(z):=\frac{a-z}{1-\overline{a}z}$, and consider the classical $BMOA$ and Bloch spaces
endowed with their Garsia's seminorms $\vertiii{\cdot}_{BMOA}$ and  $\vertiii{\cdot}_{\B}$ (see, for instance, \cite{Axler, Girela} and the references therein): 
\begin{align*}
BMOA
&:=\Bigl\{f\in\H(\D)\,:\,\vertiii{f}^2_{BMOA}:=
\sup_{a\in\D}\|f\circ\phi_a-f(a)\|^2_{H^2}<\infty\Bigr\} \\
\B
&:=\Bigl\{f\in\H(\D)\,:\,\vertiii{f}^2_{\B}:=\sup_{a\in\D}\|f\circ\phi_a-f(a)\|^2_{A^2}<\infty\Bigr\}.
\end{align*}
For a given Banach space (or a complete
metric space) $X$ of analytic functions on $\D$, a positive Borel measure $\mu$ on $\D$ is called
a $q$-Carleson measure for $X$ (vanishing $q$-Carleson measure for $X$) if the identity operator $I: X \to L^q(\mu)$ is bounded (compact).
Recall that 
 $f\in\B$ if and only if $\|f\|_{\B}:=\sup_{z\in\D}(1-|z|^2)|f'(z)|<\infty$, and $f\in BMOA$ if and only if $(1-|z|^2)\,|f'(z)|^2\,dA(z)$ is a Carleson measure for $H^p$, $0<p<\infty$,
 or equivalently
\[
\|f\|^2_{BMOA}:=\sup_{a\in\D}\int_{\D}(1-|\phi_a|^2)\,|f'|^2\,dA<\infty.
\]
Moreover, $\vertiii{f}_{\B}\simeq\|f\|_{\B}$ and
$\vertiii{f}_{BMOA}\simeq\|f\|_{BMOA}$.

We also consider the little-oh subspaces of $BMOA$ and $\B$: 
\begin{align*}
VMOA
&:=\Bigl\{f\in H^2: \lim_{|a|\to 1^{-}}
\|f\circ\phi_a-f(a)\|^2_{H^2}=0\Bigr\} \\
\B_0
&:=\Bigl\{f\in A^2\,:\,\lim_{|a|\to 1^{-}}
\|f\circ\phi_a-f(a)\|^2_{A^2}=0\Bigr\}.
\end{align*}
For $f\in\H(\D)$, recall that $f\in\B_0$ if and only if 
$\lim_{|z|\to1^{-}}(1-|z|^2)|f'(z)|=0$, and $f\in VMOA$ if and only if  $(1-|z|^2)\,|f'(z)|^2\,dA(z)$ is a vanishing Carleson measure for
 $H^p$, $0<p<\infty$,
or equivalently
\[
\lim_{|a|\to1^{-}}\int_{\D}(1-|\phi_a|^2)\,|f'|^2\,dA=0.
\] 
For $0<p<\infty$ and $m,n\in\N$, $m\le n$, Jensen's inequality shows that 
$\|f^m\|^{1/m}_{\alpha,p}\le \|f^n\|^{1/n}_{\alpha,p}$.
We will show that this result also holds for the Garsia's $BMOA$ and Bloch seminorms.

\begin{proposition}\label{prop:Bn:nested}
	Let $m,n\in\N$, $m<n$, and $f\in\H(\D)$. Then,
	\begin{align}
	\vertiii{f^m}_{BMOA}^{1/m}
	&\le \vertiii{f^n}_{BMOA}^{1/n}
	\label{BMOA:ineq}\\
	\vertiii{f^m}_{\B}^{1/m}
	&\le \vertiii{f^n}_{\B}^{1/n}.
	\label{Bloch:ineq}
	\end{align}
In particular, if $f^n\in BMOA$	($f^n\in\B$), then 
$f^m\in BMOA$	($f^m\in\B$). Moreover, if $f^n\in VMOA$	($f^n\in\B_0$), then 
$f^m\in VMOA$	($f^m\in\B_0$).
\end{proposition}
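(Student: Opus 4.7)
The plan is to fix $a\in\D$ and, with $F=f\circ\phi_a$, prove the pointwise bound
\[
\|F^m-F(0)^m\|_{H^2}^{2/m}\le\|F^n-F(0)^n\|_{H^2}^{2/n}
\]
together with its Bergman analogue in $A^2$. Taking $\sup_a$ and then square roots will yield~\eqref{BMOA:ineq} and~\eqref{Bloch:ineq} at once, and the $VMOA$ and $\B_0$ statements will also fall out of the same pointwise inequality by letting $|a|\to 1^-$.

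The first ingredient is the exact identity
\[
\|F^k-F(0)^k\|_{H^2}^2=\|F\|_{L^{2k}(\T)}^{2k}-|F(0)|^{2k},
\]
obtained by expanding the square and using the mean-value identity $\int_\T F^k\,dm=F^k(0)$. Its Bergman analogue $\|F^k-F(0)^k\|_{A^2}^2=\|F\|_{A^{2k}}^{2k}-|F(0)|^{2k}$ follows the same way from $\int_\D F^k\,dA=F^k(0)$, which is immediate from the power-series expansion and the orthogonality $\int_\D z^j\,dA=0$ for $j\ge 1$.

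The second ingredient is the elementary inequality: for $0\le\alpha\le\beta\le\gamma$ and integers $1\le m\le n$,
\[
(\beta^m-\alpha^m)^{1/m}\le(\gamma^n-\alpha^n)^{1/n}.
\]
Since $t\mapsto t^m-\alpha^m$ is non-decreasing, only the case $\beta=\gamma$ needs attention; setting $u=(\alpha/\gamma)^m\in[0,1]$ and $p=n/m\ge 1$, the remaining claim reduces to $(1-u)^p\le 1-u^p$ on $[0,1]$. This last inequality is a one-line calculus check: $h(u):=1-u^p-(1-u)^p$ vanishes at $u=0,1$, has its unique interior critical point at $u=1/2$, and $h(1/2)=1-2^{1-p}\ge 0$.

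To finish I apply the lemma with $\alpha=|F(0)|^2$, $\beta=\|F\|_{L^{2m}(\T)}^2$ and $\gamma=\|F\|_{L^{2n}(\T)}^2$: the chain $\alpha\le\beta\le\gamma$ combines the sub-mean-value inequality $|F(0)|^{2k}\le\|F\|_{L^{2k}(\T)}^{2k}$ (subharmonicity of $|F|^{2k}$) with monotonicity of $L^p$-norms on the probability space $(\T,dm)$. Together with the identity above this furnishes the pointwise-in-$a$ bound, hence~\eqref{BMOA:ineq}. The Bloch case~\eqref{Bloch:ineq} is identical after replacing $(\T,dm)$ by $(\D,dA)$; and the $VMOA$ and $\B_0$ statements are then immediate, since the pointwise bound controls the $m$-quantity by a positive power of the $n$-quantity, which vanishes with it as $|a|\to 1^-$. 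The only delicate point is the elementary inequality, and it is dispatched by calculus on a single variable.
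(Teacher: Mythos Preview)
Your proof is correct and follows essentially the same route as the paper's: both reduce to the pointwise estimate $\|F^m-F(0)^m\|_{H^2}^{1/m}\le\|F^n-F(0)^n\|_{H^2}^{1/n}$ via the orthogonality identity $\|F^k-F(0)^k\|_{H^2}^2=\|F\|_{L^{2k}}^{2k}-|F(0)|^{2k}$, monotonicity of $L^p$-norms, and the elementary inequality $(1-u)^p\le 1-u^p$ for $p\ge1$ (which the paper states in the equivalent form of superadditivity $(x+y)^p\ge x^p+y^p$). The packaging differs slightly, but the ingredients are identical.
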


Bearing in mind that $f\mapsto f\circ\phi_a$ maps $H^2$ or $A^2$ to itself, Proposition~\ref{prop:Bn:nested} follows from the following lemma. 

\begin{lemma}
	Let $m,n\in\N$, $m<n$. Then:
	\begin{align}
	\|f^m-f^m(0)\|_{H^2}^{1/m}
	&\le \|f^n-f^n(0)\|_{H^2}^{1/n}
	\qquad(f\in\H(\D))
	\label{H2:ineq0}\\
	\|f^m-f^m(0)\|_{A^2}^{1/m}
	&\le \|f^n-f^n(0)\|_{A^2}^{1/n}
	\qquad(f\in\H(\D)).
	\label{A2:ineq0}
	\end{align}
\end{lemma}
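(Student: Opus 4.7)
The plan is to reduce both inequalities to the same elementary scalar estimate, exploiting the Hilbert-space orthogonality of constants against $z\,\mathcal H(\mathbb D)$ in $L^2(\mathbb T)$ and in $L^2(\mathbb D,dA)$. I describe the $H^2$ case; the $A^2$ case will be identical with $dm$ replaced by $dA$, since both are probability measures on their respective domains.

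First, Pythagoras gives $\|f^k-f^k(0)\|_{H^2}^2=\|f^k\|_{H^2}^2-|f(0)|^{2k}$, because the constant $f^k(0)$ is orthogonal in $L^2(\mathbb T)$ to $f^k-f^k(0)\in z\,\mathcal H(\mathbb D)$. Next, $\|f^k\|_{H^2}^2=\|f\|_{H^{2k}}^{2k}$, and since $dm$ is a probability measure the classical $L^p$-nesting yields $\|f\|_{H^{2m}}\le\|f\|_{H^{2n}}$. Finally, the mean-value inequality gives $|f^m(0)|^2\le\|f^m\|_{H^2}^2$. Writing $A=\|f^m\|_{H^2}^2$, $B=\|f^n\|_{H^2}^2$, $s=|f(0)|^{2m}$ and $\alpha=n/m\ge 1$, the desired inequality \eqref{H2:ineq0} is equivalent to
\[
(A-s)^{\alpha}\le B-s^{\alpha},\qquad\text{and we have}\qquad s\le A\ \text{ and }\ A^{\alpha}\le B.
\]

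I would then finish by the scalar power-subadditivity estimate: for $0\le s\le t$ and $\alpha\ge 1$,
\[
(t-s)^{\alpha}\le t^{\alpha}-s^{\alpha}.
\]
Applied with $t=A$ and combined with $A^{\alpha}\le B$, this chains to $(A-s)^{\alpha}\le A^{\alpha}-s^{\alpha}\le B-s^{\alpha}$, which is exactly what is needed. To prove the scalar estimate itself, I would normalize $u=s/t\in[0,1]$ and observe that $\psi(u)=u^{\alpha}+(1-u)^{\alpha}$ is convex on $[0,1]$ (its second derivative is nonnegative for $\alpha\ge 1$) with $\psi(0)=\psi(1)=1$, so convexity forces $\psi\le 1$ throughout.

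I do not anticipate a genuine obstacle: the argument is bookkeeping built on orthogonality, $L^p$-nesting on probability spaces, and one convexity fact. The $A^2$ case goes through in exactly the same way, since $\int_{\mathbb D}z^k\,dA=0$ for $k\ge 1$ gives the same Pythagorean splitting, $\|f^k\|_{A^2}^2=\|f\|_{A^{2k}}^{2k}$, and $dA$ is a probability measure on $\mathbb D$; only the measure changes, not the argument.
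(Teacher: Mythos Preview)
Your proof is correct and follows essentially the same route as the paper: Pythagorean splitting $\|f^k-f^k(0)\|^2=\|f^k\|^2-|f(0)|^{2k}$, the identity $\|f^k\|_{H^2}^2=\|f\|_{H^{2k}}^{2k}$, the $L^p$-nesting $\|f\|_{H^{2m}}\le\|f\|_{H^{2n}}$, and then a single scalar convexity inequality. Your inequality $(t-s)^\alpha\le t^\alpha-s^\alpha$ is exactly the superadditivity $(x+y)^\alpha\ge x^\alpha+y^\alpha$ (set $x=t-s$, $y=s$) that the paper invokes, so the two arguments differ only in notation and in how the scalar step is justified.
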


\vspace*{1pt}
\begin{proof}
	We only prove \eqref{H2:ineq0}, the proof of  \eqref{A2:ineq0} is completely analogous replacing $H^2$ by $A^2$.
	First of all, recall that
	\begin{equation}\label{SGProp:eq1}
	\|f^k\|_{H^2}^2=\|f\|_{H^{2k}}^{2k}\qquad(f\in\H(\D),\,k\in\N),
	\end{equation}
	and, by Jensen's inequality, 
	\begin{equation}\label{SGProp:eq2}
	\|f\|_{H^{2n}}\ge\|f\|_{H^{2m}}\qquad(f\in\H(\D)).
	\end{equation}
	Now \eqref{H2:ineq0}, in the case $f(0)=0$, directly follows from~{\eqref{SGProp:eq1}} and~{\eqref{SGProp:eq2}}. Indeed, we have that
	\[
	\|f^n\|^2_{H^2}=\|f\|^{2n}_{H^{2n}}\ge\|f\|^{2n}_{H^{2m}}=\|f^m\|_{H^2}^{(2n)/m}
	\quad(f\in\H(\D)),
	\]
	and so
	\begin{equation}\label{SGProp:eq3}
	\|f^n\|^{1/n}_{H^2}\ge\|f^m\|^{1/m}_{H^2}\quad(f\in\H(\D)),
	\end{equation}
	which, in particular, gives \eqref{H2:ineq0} when $f(0)=0$. \vspace*{3pt}
	
	The general case is a consequence of~{\eqref{SGProp:eq2}}, \eqref{SGProp:eq3}, and a simple argument. First note that
	\begin{equation}\label{SGProp:eq4}
	\|f^k-f^k(0)\|^2_{H^2}=\|f^k\|^2_{H^2}-|f(0)|^{2k}
	\quad(f\in\H(\D),\,k\in\N).
	\end{equation}
	Then, for any $f\in\H(\D)$, we have that
	\begin{align*}
	\|f^n-f^n(0)\|^2_{H^2}
	&\stackrel{\mbox{\tiny$(\ast)$}}{=}
	\|f^n\|^2_{H^2}-|f(0)|^{2n}\\
	&\stackrel{\mbox{\tiny$(\star)$}}{\ge}
	\|f^m\|^{(2n)/m}_{H^2}-|f(0)|^{2n}\\
	&\stackrel{\mbox{\tiny$(\ast)$}}{=}
	\left(\|f^m-f^m(0)\|^2_{H^2}
	+|f(0)|^{2m}\right)^{n/m}-|f(0)|^{2n}\\
	&\stackrel{\mbox{\tiny$(\diamond)$}}{\ge}
	\|f^m-f^m(0)\|^{(2n)/m}_{H^2},
	\end{align*}
	where $(\ast)$ and $(\star)$ follow from~{\eqref{SGProp:eq4}} and~{\eqref{SGProp:eq3}}, respectively, while $(\diamond)$ is a consequence of the classical superadditivity inequality
	\[
	(x+y)^{\alpha}\ge x^{\alpha}+y^{\alpha}\qquad(x,y\ge0,\,\alpha\ge1).
	\] 
	(Recall that any convex function $\varphi:[0,\infty)\to\R$
	whith $\varphi(0)=0$ is superadditive, {\em i.e.} $\varphi(x+y)\ge\varphi(x)+\varphi(y)$, for any 
	$x,y\ge0$.)\newline
	Hence 
	\[
	\|f^n-f^n(0)\|^{1/n}_{H^2}\ge \|f^m-f^m(0)\|^{1/m}_{H^2}
	\qquad(f\in\H(\D)),
	\]
	and the proof is complete.
\end{proof}

The final part of this section recalls the descriptions of the symbols $g\in\H(\D)$ such that 
the operators $T_g$, $S_g$ and $M_g$  are bounded, or compact, on $A^p_{\alpha}$.

\begin{theorem}
\label{thm:TSM-bounded}
Let $g\in\H(\D)$, $0<p<\infty$ and $\alpha\ge-1$. Then:
\begin{enumerate}[label={\sf\alph*)},ref={\sf\alph*)},topsep=3pt, leftmargin=*,itemsep=3pt] 
\item \label{item:thm:TSM-bounded:1}
$T_g\in\BB(A^p_\alpha)$ if and only if $g\in \B$, when $\alpha>-1$, and $g\in BMOA$, when $\alpha=-1$.
 Moreover, $\|T_g\|_{\alpha,p}\simeq\|g\|_{\B}$, if $\alpha>-1$, and $\|T_g\|_{\alpha,p}\simeq\|g\|_{BMOA}$, if $\alpha=-1$.
\item \label{item:thm:TSM-bounded:2}
$T_g\in\KK(A^p_{\alpha})$ if and only if $g\in\B_0$, when $\alpha>-1$, and $g\in VMOA$, when $\alpha=-1$.
\end{enumerate}
\end{theorem}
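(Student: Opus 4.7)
The plan rests on two standard reductions. First, the Littlewood--Paley identity $\|h\|_{\alpha,p}^p \simeq |h(0)|^p + \int_{\D}|h'(z)|^p(1-|z|^2)^{p+\alpha}\,dA(z)$ (valid for $0<p<\infty$, $\alpha>-1$), together with its Hardy-space analogue $\|h\|_{H^p}^p \simeq |h(0)|^p + \|A(h)\|_{L^p(\T)}^p$ through the Lusin area function $A$ (for $\alpha=-1$). Second, the elementary identity $(T_gf)'(z)=f(z)g'(z)$. Together these reduce boundedness of $T_g$ on $A^p_{\alpha}$ to a weighted Carleson-type inequality in which $g$ enters only through the measure $d\mu_g(z):=|g'(z)|^p(1-|z|^2)^{p+\alpha}\,dA(z)$ if $\alpha>-1$, and $d\nu_g(z):=|g'(z)|^2(1-|z|^2)\,dA(z)$ if $\alpha=-1$.

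For the Bergman case $\alpha>-1$, sufficiency is immediate from the pointwise Bloch bound $(1-|z|^2)|g'(z)|\le \|g\|_{\B}$: this upgrades to $|(T_gf)'(z)|^p(1-|z|^2)^{p+\alpha}\le \|g\|_{\B}^p|f(z)|^p(1-|z|^2)^{\alpha}$, and integration yields $\|T_gf\|_{\alpha,p}\lesssim\|g\|_{\B}\|f\|_{\alpha,p}$. For necessity I would test against the normalized reproducing-kernel family $f_a(z)=(1-|a|^2)^c/(1-\overline{a}z)^{c+(2+\alpha)/p}$ which, for $c>0$ chosen large enough, satisfies $\|f_a\|_{\alpha,p}\simeq 1$ and $|f_a(a)|\simeq (1-|a|^2)^{-(2+\alpha)/p}$. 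Applying the standard point-evaluation estimate $|h'(a)|^p(1-|a|^2)^{p+\alpha+2}\lesssim\|h\|_{\alpha,p}^p$ to $h=T_gf_a$, and using $(T_gf_a)'(a)=f_a(a)g'(a)$, produces $(1-|a|^2)|g'(a)|\lesssim\|T_g\|_{\alpha,p}$, i.e.\ $g\in\B$.

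For the Hardy case $\alpha=-1$, the same scheme phrased through the area function reduces boundedness of $T_g$ on $H^p$ to the statement that $d\nu_g$ is a Carleson measure for $H^p$, which by the classical Carleson--Fefferman theorem is equivalent to $g\in BMOA$. The case $p=2$ reduces directly to Carleson's embedding for $H^2$; the general case $0<p<\infty$ then follows from the fact that the Carleson condition is independent of $p$. Necessity is again a test-function argument against normalized reproducing kernels, localized to the Carleson box over the arc at $a/|a|$.

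Compactness is handled by the standard weak-to-strong criterion: $T_g$ is compact on $A^p_{\alpha}$ iff it sends every bounded sequence in $A^p_{\alpha}$ that converges to $0$ uniformly on compact subsets of $\D$ to a norm-null sequence. Applying this to the family $\{f_a\}$ above (which tends to $0$ uniformly on compacts as $|a|\to 1$) extracts $\lim_{|z|\to1}(1-|z|^2)|g'(z)|=0$ in the Bergman case, or the vanishing-Carleson condition defining $VMOA$ in the Hardy case. For sufficiency I would use the dilation trick $g_r(z):=g(rz)$: for $r<1$ the symbol $g_r$ has bounded derivative, so $T_{g_r}$ is compact by approximation by finite-rank operators, and the little-oh (resp.\ vanishing-Carleson) hypothesis yields $\|T_g-T_{g_r}\|_{\alpha,p}\to 0$ as $r\to 1^-$. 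The main technical obstacle is the low-$p$ Hardy case, where the equivalence with $BMOA$ must be routed through the area-function characterization of $H^p$, and keeping the argument uniform across the quasi-Banach range $0<p<1$ is the principal difficulty.
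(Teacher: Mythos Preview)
The paper does not actually prove this theorem: immediately after the statement it simply records that the result is already known, citing Aleman--Siskakis (Complex Variables 1995 and Indiana 1997), Aleman--Cima (J.\ Anal.\ Math.\ 2001), and Aleman--Constantin (J.\ Anal.\ Math.\ 2009) for the various ranges of $p$ and $\alpha$. So there is no ``paper's own proof'' to compare against; the theorem is quoted as background.

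Your sketch is in line with the arguments in those references. The Bergman case via the Littlewood--Paley equivalence and the pointwise Bloch bound is exactly the standard route, and your reproducing-kernel test for necessity is the usual one. For $\alpha=-1$ the reduction to a Carleson condition through the area function is correct in spirit; the genuinely delicate part, as you yourself flag, is the range $0<p<1$, which is precisely what Aleman--Cima handles and is not a one-line consequence of the $p\ge1$ case. Your compactness argument (little-oh/vanishing Carleson via kernels for necessity, dilation approximation $T_{g_r}\to T_g$ in norm for sufficiency) is also the standard scheme. In short: the proposal is a reasonable outline of the classical proofs, but be aware that in this paper the theorem is cited rather than proved.
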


Theorem \ref{thm:TSM-bounded} is originally proved, for $\alpha=-1$, in  \cite[Thm.~1, Corollary 1]{Aleman:Siskakis:1} ($p\ge1$) and in \cite[Thm.~1(ii), Corollary 1(ii)]{Aleman:Cima} ($0<p<1$)  and, for $\alpha>-1$,  in
\cite[Thm.~1]{Aleman:Siskakis:2} ($p\ge1$) and in 
\cite[Thm.~4.1(i)]{Aleman:Constantin} ($0<p<1$).

\begin{proposition}\label{prop:SM-bounded-compact}
Let $g\in\H(\D)$, $0<p<\infty$ and $\alpha\ge-1$. Then:
\begin{enumerate}[label={\sf\alph*)},ref={\sf\alph*)},topsep=3pt, leftmargin=*,itemsep=3pt] 
\item \label{item:prop:SM-bounded-compact:1}
$S_g\in\BB(A^p_\alpha)$ (or $M_g\in\BB(A^p_\alpha)$)  if and only if $g\in H^\infty$. 
Moreover, $\|S_g\|_{\alpha,p}\simeq\|M_g\|_{\alpha,p}\simeq\|g\|_{H^{\infty}}$.
\item \label{item:prop:SM-bounded-compact:2}
$S_g\in\KK(A^p_{\alpha})$ (or $M_g\in\KK(A^p_{\alpha})$) if and only if $g\equiv0$.
\end{enumerate}
\end{proposition}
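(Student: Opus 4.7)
\noindent\emph{Plan.}
The proof of part~(a) for $M_g$ splits into sufficiency and necessity. If $g\in H^\infty$, the pointwise bound $|f(z)g(z)|\le \|g\|_{H^\infty}|f(z)|$ integrates directly (against $(1-|z|^2)^\alpha\,dA$ for $\alpha>-1$, or along boundary circles in the Hardy case $\alpha=-1$) to yield $\|M_gf\|_{\alpha,p}\le\|g\|_{H^\infty}\|f\|_{\alpha,p}$, hence $\|M_g\|_{\alpha,p}\lesssim\|g\|_{H^\infty}$. For the reverse estimate, I would test $M_g$ against the standard family $f_a(z)=(1-|a|^2)^{(\alpha+2)/p}(1-\overline a z)^{-2(\alpha+2)/p}$ with $a\in\D$, which satisfies $\|f_a\|_{\alpha,p}\simeq 1$ uniformly in $a$ and $|f_a(a)|^p\simeq (1-|a|^2)^{-(\alpha+2)}$. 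Combined with the standard pointwise bound $|h(a)|^p\lesssim\|h\|_{\alpha,p}^p\,(1-|a|^2)^{-(\alpha+2)}$, valid for all $\alpha\ge-1$, applied to $h=M_gf_a$, this gives $|g(a)|\lesssim\|M_g\|_{\alpha,p}$ for every $a\in\D$, whence $g\in H^\infty$ with $\|g\|_{H^\infty}\lesssim\|M_g\|_{\alpha,p}$.

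For $S_g$, the plan is to reduce to the $M_g$ case via the identity $(S_gf)'(z)=f'(z)g(z)$ combined with the classical Littlewood--Paley type equivalence
\[
\|h\|_{\alpha,p}^p\simeq|h(0)|^p+\int_\D|h'(z)|^p(1-|z|^2)^{\alpha+p}\,dA(z),
\]
valid for every $\alpha\ge-1$ and $0<p<\infty$. Since $(S_gf)(0)=0$, this yields $\|S_gf\|_{\alpha,p}^p\simeq\|M_g(f')\|_{\alpha+p,p}^p$. Because differentiation induces an isomorphism $A^p_\alpha/\C\to A^p_{\alpha+p}$ (inverted by integration from $0$), we obtain $S_g\in\BB(A^p_\alpha)\Leftrightarrow M_g\in\BB(A^p_{\alpha+p})$, with comparable norms. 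The hypothesis $\alpha\ge-1$ and $p>0$ ensures $\alpha+p>-1$, so the target weight lies in the genuine Bergman range and the first paragraph forces $g\in H^\infty$ with $\|g\|_{H^\infty}\simeq\|S_g\|_{\alpha,p}$, completing part~(a).

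For part~(b), the sufficiency is trivial. For necessity, observe that the family $\{f_a\}$ defined above is norm-bounded in $A^p_\alpha$ and satisfies $f_a\to 0$ uniformly on compact subsets of $\D$ as $|a|\to1^-$. A standard characterization of compactness on $A^p_\alpha$ (valid also in the quasi-Banach range $0<p<1$) ensures that if $M_g$ is compact then $\|M_gf_a\|_{\alpha,p}\to0$, and the pointwise lower bound from the first paragraph then forces $|g(a)|\to0$ as $|a|\to1^-$, whence the maximum modulus principle yields $g\equiv0$. The corresponding statement for $S_g$ follows from the reduction of the second paragraph, since compactness of $S_g$ on $A^p_\alpha$ is equivalent to compactness of $M_g$ on $A^p_{\alpha+p}$.

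The main obstacle I anticipate is the uniform treatment of the Hardy endpoint $\alpha=-1$ together with the quasi-Banach range $0<p<1$: the Littlewood--Paley seminorm equivalence, the test-function normalization, and the compactness characterization via uniform-on-compacta convergence of bounded sequences must all be invoked across these cases; each is classical but must be cited with care so that the proof genuinely unifies the Bergman and Hardy statements.
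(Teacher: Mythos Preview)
Your proposal is correct. The treatment of $M_g$ (both boundedness and compactness) matches the paper's: same test family $f_a$, same pointwise estimate, same weak-null-sequence argument for compactness.

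The route for $S_g$ is genuinely different. The paper argues the two directions separately: for sufficiency it uses the paraproduct identity $S_g=M_g-T_g-g(0)\delta_0$ together with the already-established boundedness of $T_g$ when $g\in H^\infty$; for necessity it tests directly on the same family $h_\lambda=f_a$ but applies the \emph{derivative} point estimate
\[
|(S_gh_\lambda)'(\lambda)|\lesssim (1-|\lambda|^2)^{-\frac{\alpha+2}{p}-1}\|S_gh_\lambda\|_{\alpha,p},
\]
and reads off $|g(\lambda)|$ from the explicit formula $(S_gh_\lambda)'(\lambda)=g(\lambda)h_\lambda'(\lambda)$. Your approach instead invokes the Littlewood--Paley equivalence and the differentiation isomorphism $A^p_\alpha/\C\simeq A^p_{\alpha+p}$ to reduce $S_g$ on $A^p_\alpha$ to $M_g$ on $A^p_{\alpha+p}$ in one stroke, handling both directions and the compactness statement uniformly. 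The trade-off: your argument needs the Littlewood--Paley equivalence across the full range $\alpha\ge-1$, $0<p<\infty$ (classical but requiring care at the Hardy endpoint, as you note), while the paper's sufficiency argument leans on the prior characterization of $T_g$-boundedness. Both are valid; yours is arguably more self-contained once the Littlewood--Paley ingredient is granted.
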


The characterization of the boundedness for $M_g$ follows from  a classical result on pointwise multipliers (see~{\cite[Lemma 11]{Duren:Romberg:Shields}} or \cite[Lemma 1.10]{Vinogradov}). 
The remaining part of 
 Proposition \ref{prop:SM-bounded-compact} is well known for the experts, but unfortunately we have not found any explicit reference. For a sake of completeness we include a sketch of the proof. If $g\in H^{\infty}$ then $M_g,T_g,g(0)\delta_0\,\in\BB(A^p_ {\alpha})$, and so 
 $S_g\in\BB(A^p_{\alpha})$, by \eqref{eqn:formula}.
 In order to prove the converse, recall that the Bergman kernel for $A^2_{\alpha}$ is $K_{\alpha}(z,\lambda)=(1-\overline{\lambda}z)^{-\alpha-2}$, and, in particular, the analytic function 
 \[
 h_{\lambda}(z)=\frac{(1-|\lambda|^2)^{\frac{\alpha+2}p}}{(1-\overline{\lambda}z)^{\frac{2\alpha+4}p}}
\qquad(\lambda\in\D)
 \]
 satisfies $\|h_ {\lambda}\|_ {\alpha,p}=1$. Thus if $S_g\in\BB(A^p_{\alpha})$ then 
 \[
 |(S_gh_{\lambda})'(\lambda)|
 \le \frac{c_{\alpha,p}}{(1-|\lambda|^2)^{\frac{\alpha+2}p+1}}
 \|S_gh_ {\lambda}\|_ {\alpha,p}
 \le \frac{c_{\alpha,p}\|S_g\|_ {\alpha,p}}{(1-|\lambda|^2)^{\frac{\alpha+2}p+1}},
 \]
from which follows that 
$\|g\|_{H^{\infty}}\le C_{\alpha,p}\,\|S_g\|_ {\alpha,p}$.
A similar argument shows that if  $M_g\in\BB(A^p_{\alpha})$ then $g\in H^{\infty}$.

Using standard arguments on compact operators between spaces of analytic functions (see Lemma \ref{lem:compactness}) together with the above estimates it is easy to prove part \ref{item:prop:SM-bounded-compact:2} of Proposition \ref{prop:SM-bounded-compact}.

\section{The algebra $\A_g$ generated by the operators $T_g$, $S_g$, and $M_g$.}\label{sec:algebra:of:g-operators}

The main goal of this section is to show that any operator $L\in\A_g$  has a unique representation of the form  \eqref{eqn:general:expression:operator} when $g$ is non constant and $g(0)\neq 0$. A powerful purely algebraic machinery which helps dealing with such questions are the Gr\"obner bases 
\cite{AdLo}, \cite{Ufn}. However, we have preferred a direct approach, partly for the sake of completeness, but also  because our further arguments need some more specific information about this representation, like for example Proposition~\ref{prop:algebraLgk} below.

\subsection{Some useful identities} In this section we gather some formulas that will be used later on.

\begin{proposition}
Let $g\in \H(\D)$, and $j,k\in\N$. Then:
\begin{gather}
M_g   = S_g+T_g+g(0)\,\delta_0 \label{eqn:MSTdelta}\\
M_g^k = M_{g^k} \label{eqn:Mk}\\
S_g^k = S_{g^k} \label{eqn:Sk}\\
S_{g^j}T_{g^k}=\tfrac{k}{j+k}\,T_{g^{j+k}} \label{eqn:ST}\\
S_{g^j}M_{g^k}=S_{g^{j+k}}+\tfrac{k}{j+k}\,T_{g^{j+k}} 
                                           \label{eqn:SM}\\
T_{g^j}M_{g^k} = \tfrac{j}{j+k}\,T_{g^{j+k}} \label{eqn:TM}\\
T_gS_g = S_gT_g-T_g^2-g(0)(g-g(0))\,\delta_0                                   \label{eqn:TS}
\end{gather}
\end{proposition}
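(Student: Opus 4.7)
The plan is to verify each identity by direct computation using the defining integral representations of $T_g$, $S_g$, and $M_g$; the only non-routine identity is the last one, which will be deduced from the previous ones.

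For \eqref{eqn:MSTdelta}, I would apply the usual integration by parts to $fg$: since $(fg)'=fg'+f'g$, integrating from $0$ to $z$ yields $f(z)g(z)-f(0)g(0)=T_gf(z)+S_gf(z)$, giving \eqref{eqn:MSTdelta}. For \eqref{eqn:Mk} it suffices to observe that pointwise multiplication is associative, so $M_gM_h=M_{gh}$, and iteration gives $M_g^k=M_{g^k}$. For \eqref{eqn:Sk} I would note that $(S_gf)'=g\,f'$, so by induction $(S_g^kf)'=g\,(S_g^{k-1}f)'=\cdots=g^k f'$, and since $S_g^kf(0)=0=S_{g^k}f(0)$, we conclude $S_g^k=S_{g^k}$.

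Identities \eqref{eqn:ST}, \eqref{eqn:SM}, \eqref{eqn:TM} are all obtained from the key observation $(g^m)'=m\,g^{m-1}g'$. For \eqref{eqn:ST}, setting $h=T_{g^k}f$ gives $h'=k\,f\,g^{k-1}g'$, hence
\[
S_{g^j}T_{g^k}f(z)=\int_0^z h'(\zeta)g^j(\zeta)\,d\zeta=k\int_0^z f(\zeta)g^{j+k-1}(\zeta)g'(\zeta)\,d\zeta=\tfrac{k}{j+k}T_{g^{j+k}}f(z).
\]
For \eqref{eqn:SM}, expanding $(fg^k)'=f'g^k+kfg^{k-1}g'$ inside $S_{g^j}M_{g^k}f(z)=\int_0^z(fg^k)'(\zeta)g^j(\zeta)\,d\zeta$ splits it into exactly $S_{g^{j+k}}f(z)+\tfrac{k}{j+k}T_{g^{j+k}}f(z)$. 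For \eqref{eqn:TM}, the computation $T_{g^j}M_{g^k}f(z)=\int_0^z f(\zeta)g^k(\zeta)(g^j)'(\zeta)\,d\zeta=j\int_0^z f g^{j+k-1}g'\,d\zeta$ yields $\tfrac{j}{j+k}T_{g^{j+k}}f(z)$.

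The last identity \eqref{eqn:TS} is where some care is needed, and my idea is to avoid computing $T_gS_g$ by hand and instead leverage the identities already proved. Applying $T_g$ on the left of \eqref{eqn:MSTdelta} yields $T_gM_g=T_gS_g+T_g^2+g(0)\,T_g\delta_0$. Since $\delta_0 f=f(0)$ is a scalar, $T_g\delta_0 f=f(0)\,T_g(1)=f(0)(g-g(0))$, that is, $T_g\delta_0=(g-g(0))\delta_0$. On the other hand, \eqref{eqn:TM} with $j=k=1$ gives $T_gM_g=\tfrac12 T_{g^2}$, while \eqref{eqn:ST} with $j=k=1$ gives $S_gT_g=\tfrac12 T_{g^2}$. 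Combining these produces $S_gT_g=T_gS_g+T_g^2+g(0)(g-g(0))\delta_0$, which rearranges to \eqref{eqn:TS}. The only genuine subtlety is the little lemma $T_g\delta_0=(g-g(0))\delta_0$, so I would make sure to state it clearly before using it.
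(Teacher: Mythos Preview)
Your proposal is correct and follows essentially the same route as the paper: integration by parts for \eqref{eqn:MSTdelta}, induction for \eqref{eqn:Sk}, direct differentiation for \eqref{eqn:ST} and \eqref{eqn:TM}, and the deduction of \eqref{eqn:TS} from \eqref{eqn:MSTdelta} together with $T_gM_g=\tfrac12T_{g^2}=S_gT_g$ and $T_g\delta_0=(g-g(0))\delta_0$. The only cosmetic difference is in \eqref{eqn:SM}: you compute directly by expanding $(fg^k)'$ inside the integral, whereas the paper derives it algebraically by writing $M_{g^k}=S_{g^k}+T_{g^k}+g(0)^k\delta_0$, noting $S_{g^j}\delta_0=0$, and then invoking \eqref{eqn:Sk} and \eqref{eqn:ST}.
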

\vspace*{1pt}
\begin{proof} Let $f\in\H(\D)$.\hfill
\begin{enumerate}[itemsep=3pt,leftmargin=*,wide=0pt]
\item[\eqref{eqn:MSTdelta}]  Since $(gf)'=g'f+gf'$, we have
\[
g(z)f(z)
= g(0)f(0)+\int_0^zg'(\zeta)f(\zeta)\,d\zeta
   +\int_0^zg(\zeta)f'(\zeta)\,d\zeta,
\]
that is, $M_gf=T_gf+S_gf+g(0)\,\delta_0f$.
\item[\eqref{eqn:Mk}] $M_g^k f=g^k f=M_{g^k}f$.
\item[\eqref{eqn:Sk}] We proceed by induction on $k$. For $k=1$ there is nothing to prove. Now assume that $S_g^k = S_{g^k}$. Then
\[
S^{k+1}_gf(z)=S_g(S_{g^k}f)(z)
=\int_0^zg(\zeta)^{k+1}f'(\zeta)\,d\zeta
=S_{g^{k+1}}f(z),
\]
 that is, $S^{k+1}_g=S_{g^{k+1}}$.
\item[\eqref{eqn:ST}] It follows by integration from the identity   
$g^j(T_{g^k}f)'=\frac{k}{j+k}\,(g^{j+k})'f$.
\item[\eqref{eqn:SM}] It follows from~{\eqref{eqn:MSTdelta}},
\eqref{eqn:Sk} and \eqref{eqn:ST}:
\[
S_{g^j}M_{g^k}
=S_{g^j}S_{g^k}+S_{g^j}T_{g^k}
=S_{g^{j+k}}+\tfrac{k}{j+k}\,T_{g^{j+k}}
\]
\item[\eqref{eqn:TM}] It follows by integration from the identity   
$(g^j)'M_{g^k}f=\frac{j}{j+k}\,(g^{j+k})'f$.
\item[\eqref{eqn:TS}] It follows from~{\eqref{eqn:MSTdelta}},
\eqref{eqn:TM} and~{\eqref{eqn:ST}} :
\begin{align*}
T_gS_g 
&= T_gM_g-T_g^2-g(0)(g-g(0))\,\delta_0 \\ 
&=S_gT_g-T_g^2-g(0)(g-g(0))\,\delta_0\qedhere
\end{align*}
\end{enumerate}
\end{proof}

\begin{proposition} Let $g\in \H(\D)$, then
\begin{alignat}{1}
T_g(g-g(0))^n=\tfrac1{n+1}(g-g(0))^{n+1}\qquad(n\in\N\cup\{0\})
\label{eqn:Tg:on:powers:of:g-g(0)}\\
S_g(g-g(0))^n=g(0)(g-g(0))^n+\tfrac{n}{n+1}(g-g(0))^{n+1}
\qquad(n\in\N)
\label{eqn:Sg:on:powers:of:g-g(0)}
\end{alignat}
\end{proposition}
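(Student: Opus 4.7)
The plan is to establish both identities by direct computation, and in fact to derive the $S_g$ formula as a consequence of the $T_g$ formula together with the already-proved decomposition \eqref{eqn:MSTdelta}.

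For \eqref{eqn:Tg:on:powers:of:g-g(0)}, I would simply plug $f=(g-g(0))^n$ into the defining integral of $T_g$:
\[
T_g(g-g(0))^n(z)=\int_0^z(g(\zeta)-g(0))^n\,g'(\zeta)\,d\zeta.
\]
The integrand is exactly $\frac{d}{d\zeta}\!\left[\tfrac{1}{n+1}(g(\zeta)-g(0))^{n+1}\right]$, and this antiderivative vanishes at $\zeta=0$, so the fundamental theorem of calculus gives $\tfrac{1}{n+1}(g(z)-g(0))^{n+1}$, as claimed. This covers all $n\in\N\cup\{0\}$.

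For \eqref{eqn:Sg:on:powers:of:g-g(0)}, rather than computing the $S_g$-integral directly, I would observe that for $n\ge 1$ the evaluation $\delta_0(g-g(0))^n=(g(0)-g(0))^n=0$, so \eqref{eqn:MSTdelta} specializes to
\[
S_g(g-g(0))^n=M_g(g-g(0))^n-T_g(g-g(0))^n.
\]
Then write $g=(g-g(0))+g(0)$ inside the multiplication operator to get
\[
M_g(g-g(0))^n=(g-g(0))^{n+1}+g(0)(g-g(0))^n,
\]
and subtract the value $\tfrac{1}{n+1}(g-g(0))^{n+1}$ from \eqref{eqn:Tg:on:powers:of:g-g(0)}; the $(g-g(0))^{n+1}$ coefficient collapses to $\tfrac{n}{n+1}$, and the $g(0)(g-g(0))^n$ term remains, which is precisely the asserted formula.

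There is no real obstacle here; both identities are routine once the derivative of $(g-g(0))^{n+1}$ is recognised, and the second identity is really just bookkeeping using \eqref{eqn:MSTdelta}. The one small point worth noting is that \eqref{eqn:Sg:on:powers:of:g-g(0)} is stated only for $n\in\N$, which is exactly the range where $\delta_0(g-g(0))^n=0$; for $n=0$ the formula would pick up an extra term from $\delta_0$, so restricting to $n\ge 1$ is essential.
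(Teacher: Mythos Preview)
Your proof is correct. For \eqref{eqn:Tg:on:powers:of:g-g(0)} you do exactly what the paper does (it just says ``direct computation''). For \eqref{eqn:Sg:on:powers:of:g-g(0)} the paper instead computes the defining integral of $S_g$ directly: it writes
\[
S_g(g-g(0))^n(z)=n\int_0^z g(\zeta)g'(\zeta)(g(\zeta)-g(0))^{n-1}\,d\zeta,
\]
splits $g=(g-g(0))+g(0)$ inside the integral, and evaluates the two resulting pieces. Your route via \eqref{eqn:MSTdelta} is a clean alternative: it avoids redoing an integral and instead reuses the $T_g$ computation together with the trivial evaluation of $M_g$, at the cost of having to observe that $\delta_0(g-g(0))^n=0$ for $n\ge1$. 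Both arguments amount to the same ``write $g=(g-g(0))+g(0)$'' trick, just applied at different stages, and your remark about why the restriction $n\ge1$ is needed is a nice bonus.
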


\begin{proof}
Identity~{\eqref{eqn:Tg:on:powers:of:g-g(0)}} is a direct computation, while~{\eqref{eqn:Sg:on:powers:of:g-g(0)}} is easily checked:
\begin{align*}
S_g(g-g(0))^n(z)
&=n\int_0^zg(\zeta)g'(\zeta)(g(\zeta)-g(0))^{n-1}d\zeta \\
&=n\int_0^zg'(\zeta)(g(\zeta)-g(0))^nd\zeta \\
& \,\,\,\,\,\,\,\,  
     +ng(0)\int_0^zg'(\zeta)(g(\zeta)-g(0))^{n-1}d\zeta \\
&= \frac{n}{n+1}(g(z)-g(0))^{n+1}+g(0)(g(z)-g(0))^n.
\qedhere
\end{align*}
\end{proof}

\begin{corollary}\label{cor:T:S:polynomial:g0}
Let $g\in \H(\D)$ and  let $P$ be a polynomial of degree $n$. Then:

\begin{enumerate}[label={\sf\alph*)},ref={\sf\alph*)},topsep=3pt, leftmargin=*,itemsep=3pt] 
\item \label{item:cor:T:S:polynomial:g0:1} 
 $T_g P(g-g(0))=Q(g-g(0))$, where $Q(z)=\int_0^zP(\zeta)\,d\zeta$ is a polynomial of degree $n+1$.
\item \label{item:cor:T:S:polynomial:g0:2}
 $S_g P(g-g(0))=Q(g-g(0))$, with 
$Q(z)=g(0)(P(z)-P(0))+\int_0^z\zeta\,P'(\zeta)\,d\zeta$, which is a polynomial of degree of $n+1$.
\end{enumerate}
\end{corollary}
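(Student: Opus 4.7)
The plan is to reduce both statements to the monomial identities \eqref{eqn:Tg:on:powers:of:g-g(0)} and \eqref{eqn:Sg:on:powers:of:g-g(0)} by expanding $P$ in powers of $z$ and invoking linearity of $T_g$ and $S_g$. Write
\[
P(z)=\sum_{k=0}^n a_k z^k,
\]
with $a_n\ne0$, so that $P(g-g(0))=\sum_{k=0}^n a_k (g-g(0))^k$.

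For part \ref{item:cor:T:S:polynomial:g0:1}, I would simply observe that $T_g$ is linear and that \eqref{eqn:Tg:on:powers:of:g-g(0)} (valid already for $k=0$, since $T_g 1=g-g(0)$) gives
\[
T_g P(g-g(0))=\sum_{k=0}^n \frac{a_k}{k+1}(g-g(0))^{k+1}=Q(g-g(0)),
\]
where $Q(z)=\sum_{k=0}^n\frac{a_k}{k+1}z^{k+1}=\int_0^z P(\zeta)\,d\zeta$, a polynomial of degree exactly $n+1$ because $a_n\ne0$.

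For part \ref{item:cor:T:S:polynomial:g0:2} the only thing to watch is that \eqref{eqn:Sg:on:powers:of:g-g(0)} is stated for $n\in\N$, so the constant term has to be treated separately: since $S_g c=0$ for any constant $c$, the $a_0$-term contributes nothing. Applying \eqref{eqn:Sg:on:powers:of:g-g(0)} to the remaining monomials gives
\[
S_g P(g-g(0))
=g(0)\sum_{k=1}^n a_k (g-g(0))^k
 +\sum_{k=1}^n \frac{k\,a_k}{k+1}(g-g(0))^{k+1}.
\]
The first sum is $g(0)\bigl(P(g-g(0))-P(0)\bigr)$ and the second is $R(g-g(0))$, where
\[
R(z)=\sum_{k=1}^n \frac{k\,a_k}{k+1}z^{k+1}=\int_0^z \zeta\,P'(\zeta)\,d\zeta,
\]
since $\zeta P'(\zeta)=\sum_{k=1}^n k\,a_k\zeta^k$. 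This yields the stated formula for $Q$, whose leading coefficient is $\frac{n}{n+1}a_n\ne0$, so $\deg Q=n+1$.

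There is no real obstacle here: the statement is a direct bookkeeping consequence of the monomial identities proved just above, once one takes care of the $k=0$ term separately in the $S_g$ case.
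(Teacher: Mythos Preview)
Your proof is correct and matches the paper's approach exactly: the paper simply says part~\ref{item:cor:T:S:polynomial:g0:1} follows from~\eqref{eqn:Tg:on:powers:of:g-g(0)} and part~\ref{item:cor:T:S:polynomial:g0:2} from~\eqref{eqn:Sg:on:powers:of:g-g(0)} together with $S_g1=0$, which is precisely what you have written out in detail. One tiny caveat: your degree argument in~\ref{item:cor:T:S:polynomial:g0:2} via the leading coefficient $\tfrac{n}{n+1}a_n$ tacitly assumes $n\ge1$ (for $n=0$ the polynomial $Q$ vanishes identically), but this is an imprecision in the statement rather than in your reasoning.
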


\begin{proof}
Part~{\ref{item:cor:T:S:polynomial:g0:1}}  directly follows from~{\eqref{eqn:Tg:on:powers:of:g-g(0)}}. Part~{\ref{item:cor:T:S:polynomial:g0:2}}
is a direct consequence of~{\eqref{eqn:Sg:on:powers:of:g-g(0)}}
and the fact that $S_g1=0$.
\end{proof}

\begin{corollary}\label{cor:ST:mon}
Let $g\in \H(\D)$ and let $m,n\in\N$. Then 
\[
S^{m-j}_gT^j_g(g-g(0))^n
=\tfrac{n!}{(m+n)(n+j-1)!}\,(g-g(0))^{m+n}+P(g-g(0))
\quad(0\le j\le m),
\]
where $P$ is a polynomial of degree less than $m+n$ and whose coefficients only depend on $g(0)$, $m$, $n$ and $j$.
\end{corollary}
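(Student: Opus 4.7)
The plan is to prove the corollary by iterating Corollary~\ref{cor:T:S:polynomial:g0}, tracking only the top-degree coefficient; the lower-order remainder can be absorbed into $P(g-g(0))$ once one verifies that its coefficients carry no $g$-dependence beyond $g(0)$.

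First I would use Corollary~\ref{cor:T:S:polynomial:g0} to observe that both $T_g$ and $S_g$ send a polynomial of degree $k\ge1$ in $(g-g(0))$ to a polynomial of degree $k+1$ in $(g-g(0))$, with coefficients depending only on $g(0)$ and on the coefficients of the input. A short induction on $m$ then shows that $S_g^{m-j}T_g^j(g-g(0))^n$ is itself a polynomial of degree $m+n$ in $(g-g(0))$ whose coefficients depend only on $g(0),m,n,j$. This disposes of the error term $P(g-g(0))$ in the statement; what remains is to pin down the top-degree coefficient.

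For the leading coefficient, I would read off directly from the explicit formulas for $Q$ in Corollary~\ref{cor:T:S:polynomial:g0} that $T_g$ scales the top coefficient of a degree-$k$ polynomial by $1/(k+1)$, while $S_g$ scales it by $k/(k+1)$ (the summand $g(0)(P(z)-P(0))$ in part~\ref{item:cor:T:S:polynomial:g0:2} has degree only $k$ and so does not contribute to the leading term in degree $k+1$). Starting from leading coefficient $1$ in degree $n$, the $j$ successive applications of $T_g$ contribute the factor $\prod_{i=1}^{j}\tfrac{1}{n+i}=\tfrac{n!}{(n+j)!}$, raising the degree to $n+j$, and the subsequent $m-j$ applications of $S_g$ contribute the telescoping product $\prod_{k=n+j}^{n+m-1}\tfrac{k}{k+1}=\tfrac{n+j}{n+m}$. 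Multiplying the two factors recovers the claimed constant $\tfrac{n!}{(m+n)(n+j-1)!}$.

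There is no serious obstacle in this argument: it is essentially pure bookkeeping, and the only subtlety is checking that the telescoping collapses correctly and that the formula is valid at the endpoints $j=0$ and $j=m$, which one would sanity-check against identities~\eqref{eqn:Tg:on:powers:of:g-g(0)} and~\eqref{eqn:Sg:on:powers:of:g-g(0)}. Note also that $(n+j-1)!$ is well-defined for all $0\le j\le m$ since the hypothesis $n\in\N$ guarantees $n\ge1$, which is also exactly what ensures that the degree strictly increases at each step of the iteration.
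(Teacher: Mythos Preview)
Your proposal is correct and follows essentially the same approach as the paper: iterate the basic identities for $T_g$ and $S_g$ on powers of $g-g(0)$ and track the leading coefficient, absorbing lower-order terms into $P$. The only cosmetic difference is that the paper applies~\eqref{eqn:Tg:on:powers:of:g-g(0)} and~\eqref{eqn:Sg:on:powers:of:g-g(0)} directly (so $T_g^j(g-g(0))^n$ is computed exactly as a single monomial, with the remainder arising only from the $S_g^{m-j}$ stage), whereas you route through Corollary~\ref{cor:T:S:polynomial:g0} and treat both operators uniformly on polynomials; the bookkeeping and the final telescoping product are identical.
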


\begin{proof}
By~{\eqref{eqn:Tg:on:powers:of:g-g(0)}} it is clear that
\[
T_g^j(g-g(0))^n
=\tfrac1{(n+1)\cdots(n+j)}(g-g(0))^{n+j}
=\tfrac{n!}{(n+j)!}(g-g(0))^{n+j}.
\]
But~{\eqref{eqn:Sg:on:powers:of:g-g(0)}} gives that
\[
S^{m-j}_g(g-g(0))^{n+j}=\tfrac{n+j}{m+n}\,(g-g(0))^{m+n}+Q(g-g(0)),
\]
where $Q$ is a polynomial of degree less than $m+n$ whose coefficients only depend on $g(0)$, $m$, $n$ and $j$. Hence the proof is complete.
\end{proof}

\subsection{Vector space structure of $\A_g$}\label{sec:Vector space structure}

\begin{definition}\label{def:ST-decomposition}
Let $L\in\A_g^{(n)}$, where $n\in\N$.
We say that $L$ admits an {\em $ST$-decomposition} if
there exists a polynomial $P$ of degree less than $n$ satisfying 
\begin{equation*}
	L=\sum_{k=1}^n \sum_{j=0}^kc_{j,k}S_g^jT_g^{k-j}+ g(0)P(g-g(0))\,\delta_0,
\end{equation*} 
where $c_{j,k}\in\C$, for any $j,k$.  
\end{definition}

\begin{proposition}\label{prop:ST-decomposition} 
Let $g\in\H(\D)$ and $n\in\N$. Then
every  $L\in\A_g^{(n)}$ admits an $ST$-decomposition.
\end{proposition}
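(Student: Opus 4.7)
The plan is to proceed by induction on $n$. For $n=1$, the operators $S_g$ and $T_g$ are already of the required form, while \eqref{eqn:MSTdelta} gives $M_g = S_g + T_g + g(0)\delta_0$, which is an ST-decomposition with constant polynomial $P\equiv 1$ of degree strictly less than $1$.

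For the inductive step, assume the result for $\A_g^{(n)}$ and let $L\in\A_g^{(n+1)}$. By linearity we may take $L = L_1 W$ where $L_1$ is a single letter and $W$ is an $n$-letter word, and using \eqref{eqn:MSTdelta} to expand $M_g$ we further reduce to $L_1\in\{S_g,T_g,g(0)\delta_0\}$. The induction hypothesis provides an ST-decomposition $W = \sum_{k=1}^n\sum_{j=0}^k c_{j,k}S_g^jT_g^{k-j} + g(0)P(g-g(0))\delta_0$ with $\deg P < n$, and linearity once more reduces the problem to showing that $L_1$ applied to each individual summand produces an ST-decomposition of order at most $n+1$.

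Most cases are routine. The product $S_g\cdot S_g^jT_g^{k-j} = S_g^{j+1}T_g^{k-j}$ has total exponent $k+1\le n+1$; applying $S_g$ or $T_g$ to $g(0)P(g-g(0))\delta_0$ yields $g(0)\tilde P(g-g(0))\delta_0$ with $\deg\tilde P \le \deg P + 1 \le n$ by Corollary \ref{cor:T:S:polynomial:g0}; multiplying $g(0)\delta_0$ by $S_g^jT_g^{k-j}$ (with $k\ge 1$) gives zero, because $S_g f$ and $T_g f$ vanish at $0$ for every $f\in\H(\D)$; and $g(0)\delta_0\cdot g(0)P(g-g(0))\delta_0 = g(0)^2 P(0)\delta_0$ has constant polynomial, of degree $0 < n+1$. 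When $L_1 = T_g$ acts on $T_g^k = S_g^0 T_g^k$ we simply obtain $T_g^{k+1}$.

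The main obstacle is the remaining case, where $L_1 = T_g$ acts on $S_g^j T_g^{k-j}$ with $j\ge 1$ and the commutation relation \eqref{eqn:TS} must be iterated. I would first establish by induction on $m\ge 1$ the auxiliary identity
\[
T_g^m S_g \,=\, S_g T_g^m - m\,T_g^{m+1} - \tfrac{g(0)}{m!}(g-g(0))^m\,\delta_0,
\]
which follows from \eqref{eqn:TS}, the vanishing $\delta_0 T_g = 0$, and the iterate $T_g^{m-1}(g-g(0)) = \tfrac{1}{m!}(g-g(0))^m$ coming from \eqref{eqn:Tg:on:powers:of:g-g(0)}. Applying this repeatedly to push the leading $T_g$ in $T_g S_g^j$ through the block $S_g^j$, using $\delta_0 S_g = 0$ at each intermediate step to discard stray $\delta_0$-contributions, produces an expression $T_g S_g^j = \sum_{i=0}^{j}\beta_i\, S_g^i T_g^{j+1-i} + g(0) R(g-g(0))\delta_0$ with $\deg R \le j$. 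Multiplying on the right by $T_g^{k-j}$ then converts the sum into a linear combination of $S_g^i T_g^{k+1-i}$; the residual $\delta_0$-term either survives (when $k=j$, in which case $\deg R \le k < n+1$) or is annihilated by $\delta_0 T_g = 0$ (when $k > j$). This closes the induction.
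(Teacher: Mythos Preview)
Your proof is correct and follows the same inductive architecture as the paper's: both induct on $n$, reduce via \eqref{eqn:MSTdelta} to left-multiplication by $S_g$, $T_g$, or $g(0)\delta_0$, and identify $T_g\cdot S_g^jT_g^{k-j}$ with $j\ge 1$ as the only nontrivial case. The handling of that case, however, differs. The paper applies \eqref{eqn:TS} just once to write
\[
T_gS_g^jT_g^{k-j}=S_g\bigl(T_gS_g^{j-1}T_g^{k-j}\bigr)-T_g\bigl(T_gS_g^{j-1}T_g^{k-j}\bigr),
\]
observes that the inner word $T_gS_g^{j-1}T_g^{k-j}$ has only $k\le n-1$ letters, and then falls back on the outer induction hypothesis (together with the already-treated left-multiplication cases) to finish. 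You instead establish the closed-form commutation identity $T_g^mS_g=S_gT_g^m-mT_g^{m+1}-\tfrac{g(0)}{m!}(g-g(0))^m\delta_0$ and iterate it to move the leading $T_g$ through $S_g^j$ explicitly. Your route is more computational but entirely self-contained for this step; the paper's is slicker but leans harder on the induction hypothesis. Both are valid and yield the same degree control on the residual $\delta_0$-polynomial.
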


\begin{proof}
We proceed by induction on $n$.
For $n=1$ there is nothing to prove because~{\eqref{eqn:MSTdelta}} holds. 
  Let $n>1$. Since, by the induction hypothesis, any  $m$-letter $g$-word, with $m\le n-1$, admits an $ST$-decomposition, we will complete the proof by induction once we have checked that $L^{(n)}=L_nL^{(n-1)}$ has an $ST$-decomposition, when $L_n$ is either $S_g$, $T_g$, or $M_g$ and $L^{(n-1)}$ is either $g(0)P(g-g(0))\,\delta_0$, where $P$ is a polynomial of degree less than $n-1$, or $S_g^jT_g^{k-j}$, where $0\le j\le k$ and $1\le k\le n-1$. 

Assume first that $L^{(n-1)}=g(0)P(g-g(0))\,\delta_0$.
By the identity~{\eqref{eqn:MSTdelta}}  we only need to consider the case when $L_n$ is either $T_g$ or $S_g$. Then, by Corollary~{\ref{cor:T:S:polynomial:g0}}, $L^{(n)}=g(0)Q(g-g(0))\,\delta_0$, where $Q$ is a polynomial of degree less than $n$. 

Now assume that $L^{(n-1)}=S_g^jT_g^{k-j}$. As above, we only need to consider the cases $L_n=S_g$ and $L_n=T_g$. If $L_n=S_g$ then
$L^{(n)}=S_g^{j+1}T_g^{k-j}$, and, in particular, $L^{(n)}$ has an $ST$-decomposition. Now consider the case $L_n=T_g$.
If $j=0$ then $L^{(n)}=T_g^{k+1}$ and we are done.
If $j=k=1$, then 
$L^{(n)}=T_gS_g=S_gT_g-T_g^2-g(0)(g-g(0))\,\delta_0$, 
by~{\eqref{eqn:TS}}, so we also are done. Finally, if $j>1$ and $k>1$ then, again by~{\eqref{eqn:TS}}, we have that
\[
L^{(n)}=S_gT_gS_g^{j-1}T_g^{k-j}-T_g^2S_g^{j-1}T_g^{k-j},
\]
because $\delta_0S_g=0$. Since $T_gS_g^{j-1}T_g^{k-j}$ and $T_g^2S_g^{j-1}T_g^{k-j-1}$ are $g$-words with less than $n$ letters, they admit $ST$-decompositions, by the induction hypothesis. It directly follows that $L^{(n)}$ also has an $ST$-decomposition.
\end{proof}

From now on, in order to simplify the notation, we will write $g_0=g-g(0)$.
 By the above proposition, any non-trivial $g$-operator $L$ can be written as 
 \begin{equation}\label{eqn:general:formula:word:operator}
L=\sum_{k=0}^nS_g^kT_gP_k(T_g)+S_gP_{n+1}(S_g)+g(0)P_{n+2}(g_0)\,\delta_0,
\end{equation}
where $n\in\N\cup\{0\}$ and $P_0,\dots,P_{n+2}$ are polynomials  
such that $\deg P_{n+2}<n$ and either $P_n\ne0$ or $P_{n+1}\ne0$.
In other
words, the vector space $\A_g$ is spanned by
$\{S_g^jT_g^k:j,k\in\N\cup\{0\},\,j+k\ge1\}
\cup\{(g_0)^j\,\delta_0: j\in\N\cup\{0\}\}$
when $g(0)\ne0$, and by
$\{S_g^jT_g^k:j,k\in\N\cup\{0\},\,j+k\ge1\}$,
 when $g(0)=0$.

Our next goal is proving the uniqueness of the $ST$-decomposition
when the symbol $g$ is non constant and $g(0)\neq 0$.
We will need two preliminary results.

\begin{proposition} \label{prop:algebraLgk}
Let $g\in\H(\D)$, and let 
$L=L_1+g(0)P(g-g(0))\delta_0$, where 
\[
L_1=\sum_{k=1}^m \sum_{j=0}^kc_{j,k}S_g^jT_g^{k-j}
\]
is a $g$-operator of order $m\in\N$ and $P$ is a polynomial of degree less than $m$.
	Then there exists an increasing sequence $\{n_i\}_i$ in $\N$ such that $L[(g-g(0))^{n_i}]=P_i(g-g(0))$, where $P_i$ is a polynomial of degree $m+n_i$.	
\end{proposition}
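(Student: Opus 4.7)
The plan is to evaluate $L$ at the test functions $g_0^n := (g-g(0))^n$ for large $n \in \N$, where the delta piece automatically vanishes: since $g_0(0)=0$, we have $\delta_0(g_0^n)=0$ for every $n\ge 1$, so the entire contribution of $g(0)P(g_0)\delta_0$ drops out and $L(g_0^n)=L_1(g_0^n)$.

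Next I would apply Corollary~\ref{cor:ST:mon} term-by-term. For each $1 \le k \le m$ and $0 \le j \le k$, it gives
\[
S_g^j T_g^{k-j}(g_0^n)=\frac{n!}{(k+n)(n+k-j-1)!}\,g_0^{k+n}+Q_{j,k,n}(g_0),
\]
with $\deg Q_{j,k,n}<k+n$. Summing contributions from $k<m$ yields only terms of degree $<m+n$ in $g_0$, so the coefficient of $g_0^{m+n}$ in $L_1(g_0^n)$ comes exclusively from $k=m$ and equals
\[
A(n):=\frac{n!}{m+n}\sum_{j=0}^m\frac{c_{j,m}}{(n+m-j-1)!}.
\]

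The crux of the proof is showing that $A(n)\ne 0$ for all but finitely many $n$. Clearing the nonvanishing factor $(m+n)(n+m-1)!/n!$, this reduces to the nonvanishing of the polynomial
\[
R(n):=\sum_{j=0}^m c_{j,m}\prod_{i=1}^{j}(n+m-i),
\]
in which the $j$-th summand is monic of degree exactly $j$ in $n$. Polynomials of pairwise distinct degrees are linearly independent, and the hypothesis that $L_1$ has order $m$ forces at least one $c_{j,m}$ to be nonzero (otherwise $L_1\in\A_g^{(m-1)}$, contradicting the order). Therefore $R$ is a nonzero polynomial of degree $\le m$ with at most $m$ roots in $\N$. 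Enumerating $\{n\in\N:A(n)\ne 0\}$ in increasing order yields the required sequence $\{n_i\}$ with $L(g_0^{n_i})=P_i(g_0)$ of exact degree $m+n_i$.

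The main obstacle is this final nonvanishing step: recognizing that the sum of reciprocal factorials, after clearing denominators, becomes a linear combination of polynomials of distinct degrees in $n$. Once this algebraic identity is in hand, the nonvanishing of the leading coefficient is immediate from the order hypothesis, and the rest of the argument is a careful degree count powered by Corollary~\ref{cor:ST:mon}.
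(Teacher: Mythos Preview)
Your argument is correct. Both you and the paper feed $L$ the test functions $g_0^N$, kill the $\delta_0$-piece, and use Corollary~\ref{cor:ST:mon} to isolate the coefficient of $g_0^{m+N}$, which only involves the top row $(c_{0,m},\dots,c_{m,m})$. The difference lies in how nonvanishing of this coefficient is established. The paper fixes $n$ and varies the exponent over the window $N=n+1,\dots,n+m+1$; the resulting $m+1$ leading coefficients are the entries of the vector $\bigl(\frac{1}{(n+m-j+k)!}\bigr)_{j,k}\,(c_{j,m})_j$, and the paper computes the determinant of this matrix explicitly (reducing it by column operations to a product of factorials) to conclude the map is injective, so at least one entry of the image is nonzero. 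You instead let the exponent $n$ run freely and observe that the single leading coefficient $A(n)$, after clearing the harmless factor $n!/\bigl((m+n)(n+m-1)!\bigr)$, is the polynomial $R(n)=\sum_j c_{j,m}\prod_{i=1}^j(n+m-i)$; since the summands have pairwise distinct degrees, $R$ is a nonzero polynomial and vanishes at only finitely many integers. Your route is more elementary (no determinant calculation) and in fact yields the sharper conclusion that $A(n)\ne 0$ for all but at most $m$ values of $n$, whereas the paper guarantees only one good exponent in each block of $m+1$ consecutive integers. Both are more than enough to produce the required increasing sequence $\{n_i\}$.
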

 
\begin{proof}
By Corollary~{\ref{cor:ST:mon}}, for $n\in\N$ and $0\le k\le m$, we have
\[
L [(g_0)^{n+k+1}]=L_1 [(g_0)^{n+k+1}]=\frac{(n+k+1)!}{m+n+k+1}
\,a_{k,n}\,(g_0)^{m+n+k+1}+P_k(g_0),
\]
where $P_k$ is a polynomial of degree less than $m+n+k+1$ and 
\[
a_{k,n}=\sum_{j=0}^{m} \frac{c_{j,m}}{(n+m-j+k)!}.
\]
Since $L_1$ has order $m$, $(c_{0,m},c_{1,m},\cdots,c_{m,m})\ne(0,0,\dots,0)$, so we have that
$(a_{0,n},a_{1,n},\dots,a_{m,n})\ne(0,0,\dots,0)$, provided that
\begin{equation}\label{eqn:determinant}
D^{(m)}_{n}:=\det\left(\frac1{(n+m-j+k)!}\right)_{j,k=0}^m\ne0, 
\end{equation}
and, in particular, there is some $0\le k\le m$ such that $Lg_0^{n+k+1}=P(g_0)$, where $P$ is a polynomial of degree $m+n+k+1$. Thus we only have to check~{\eqref{eqn:determinant}}.
In order to do that we recall the so called Pochhammer symbols:
\[
(k)_0=1\qquad (k)_{\ell}=k(k+1)\cdots (k+\ell-1)\qquad(k,\ell\in\N).
\]
Since 
\[
\frac1{(n+m-j+k)!}=\frac1{(n+m+k)!}\,(n+m-j+k+1)_{j},
\]
we have that $D^{(m)}_{n}=b_{n,m}\,\Delta^{(m)}_{n}$, where $b_{n,m}>0$ and
\[
\Delta^{(m)}_{n}:=
	\begin{vmatrix}
	(n+m+1)_0& (n+m+2)_0&\cdots &(n+2m+1)_0\\
	(n+m)_1 & (n+m+1)_1 & \cdots & (n+2m)_1 \\
	\vdots  & \vdots  & \ddots & \vdots  \\
	(n+1)_m & (n+2)_m & \cdots & (n+m+1)_m
	\end{vmatrix}.
\]
But $(\ell)_0=1$ and  $(\ell+1)_{j+1}-(\ell)_{j+1}=(j+1)(\ell+1)_j$, we have
\[
\Delta^{(m)}_{n}=
\begin{vmatrix}
1(n+m+1)_0          & \cdots & 1(n+2m)_0 \\
2(n+m)_1         & \cdots & 2(n+2m-1)_1 \\
\vdots           & \ddots & \vdots \\
m(n+2)_{m-1} & \cdots & m(n+m)_{m-1} 
\end{vmatrix},
\]
and so $\Delta^{(m)}_n=m!\,\Delta^{(m-1)}_{n+1}$. Since $\Delta^{(1)}_{n+m}=1$,
 we get~{\eqref{eqn:determinant}}.
\end{proof}

\begin{lemma}\label{lem:basis:for:Agn}
 Let $g\in\H(\D)$.
If $g$ is not constant then $\{g^n:n\in\N\cup\{0\}\}$ and $\{(g-g(0))^n:n\in\N\cup\{0\}\}$ are bases for the vector space $\{P(g): P\mbox{ polynomial }\}$.
\end{lemma}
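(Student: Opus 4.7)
The plan is to set $V:=\{P(g):P\text{ polynomial}\}$, which is by construction a linear subspace of $\H(\D)$ spanned by $\{g^n:n\in\N\cup\{0\}\}$, and then establish the two bases by checking linear independence in the first case and doing a change of variables in the second.

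First I would handle $\{g^n\}$. Only linear independence needs proof. Suppose $\sum_{n=0}^N c_n g^n \equiv 0$ on $\D$. Setting $P(w)=\sum_{n=0}^N c_n w^n$, this reads $P\circ g\equiv0$. Since $g$ is non-constant and analytic, the open mapping theorem gives that $g(\D)$ is an open subset of $\C$; in particular $g(\D)$ is infinite. Thus $P$ is a polynomial in one complex variable vanishing on an infinite set, forcing $P=0$ and hence all $c_n=0$. Therefore $\{g^n:n\ge0\}$ is a basis of $V$.

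For the second family, write $h:=g-g(0)$. The spanning statement is a change of variables: by the binomial theorem,
\[
g^n=(h+g(0))^n=\sum_{k=0}^n\binom{n}{k}g(0)^{n-k}h^k,
\]
so every element $P(g)\in V$ can be rewritten as $Q(h)$ for some polynomial $Q$, proving $V\subset\operatorname{span}\{h^n:n\ge0\}$; the reverse inclusion is immediate since each $h^n$ is a polynomial in $g$. Linear independence of $\{h^n\}$ follows verbatim from the argument above, since $h=g-g(0)$ is again non-constant and analytic, so $h(\D)$ is open and any polynomial vanishing on $h(\D)$ must be zero.

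There is no real obstacle here; the only subtlety worth noting is the appeal to the open mapping theorem (or equivalently the identity principle), which is what converts the hypothesis ``$g$ non-constant'' into the purely algebraic statement that $g$ is transcendental over $\C$ inside $\H(\D)$. Once that observation is in place, both assertions reduce to one-variable polynomial algebra.
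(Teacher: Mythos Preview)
Your proof is correct and follows essentially the same approach as the paper: both argue that a non-constant analytic $g$ takes infinitely many values (you invoke the open mapping theorem, the paper simply states this), so any polynomial $P$ with $P(g)\equiv0$ has infinitely many roots and must vanish identically. Your explicit binomial change of variables for the second family is a bit more detailed than the paper's ``similar argument'' remark, but the substance is identical.
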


\begin{proof}
It is clear that $\{g^n:n\in\N\cup\{0\}\}$ and $\{(g_0)^n:n\in\N\cup\{0\}\}$ span the vector space 
$\{P(g): P\mbox{ polynomial }\}$. Now we want to prove that $\{g^n:n\in\N\cup\{0\}\}$ is linearly independent, which means that if $P(g)=0$, for some polynomial $P$, then $P\equiv0$. Thus assume that $P(g)=0$, for some polynomial $P$. Since $g$ is not constant, $g$ takes infinitely many values. It follows that $P$ has infinitely many zeros, that is, $P\equiv0$.
 A similar argument shows that $\{(g_0)^n:n\in\N\cup\{0\}\}$ is linearly independent, so the proof is complete.
\end{proof}

\begin{proposition}\label{prop:basis:for:Agn}
 Let $g\in\H(\D)$.

\begin{enumerate}[label={\sf\alph*)},topsep=3pt, leftmargin=*,itemsep=3pt] 
\item If $g\not\equiv0$ is constant and $I$ is the identity mapping on $\H(\D)$, then $\{I,\,\delta_0\}$ is a basis for $\A^{(n)}_g$, for every $n\in\N$, and so it is also a basis for $\A_g$.
\item If $g$ is not constant and $g(0)=0$, then \begin{equation}\label{eqn:basis:g(0)=0}
\{S_g^jT_g^{k-j}:1\le k\le n,\,0\le j\le k\}
\end{equation} 
is a basis for $\A^{(n)}_g$, and so $\{S_g^jT_g^k:j,k\in\N\cup\{0\},\,j+k\ge1\}$  is a basis for $\A_g$.
\item If $g$ is not constant and $g(0)\ne0$, then
\begin{equation}\label{eqn:basis:g(0)ne0}
\{S_g^jT_g^{k-j}:1\le k\le n,\,0\le j\le k\}
\cup\{(g-g(0))^j\delta_0: 0\le j< n\} 
\end{equation}
is a basis for $\A^{(n)}_g$, and so 
\[
\{S_g^jT_g^k:j,k\in\N\cup\{0\},\,j+k\ge1\}
\cup\{(g-g(0))^j\delta_0: j\in\N\cup\{0\}\}
\]
 is a basis for $\A_g$.
\end{enumerate}
\end{proposition}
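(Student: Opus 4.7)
The spanning half of the proposition follows from Proposition \ref{prop:ST-decomposition}: any term $g(0)P(g_0)\delta_0$ with $\deg P<n$ expands into $\mathrm{span}\{(g_0)^j\delta_0:0\le j<n\}$, and so the proposition shows that the proposed sets span $\A_g^{(n)}$ in each of the three cases. Hence only linear independence needs to be established. (Throughout, I write $g_0:=g-g(0)$.)

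For part (a), when $g\equiv g(0)\ne 0$ is constant, a direct computation gives $T_g=0$, $S_g=g(0)(I-\delta_0)$, and $M_g=g(0)I$, so every $g$-word lies in $V:=\mathrm{span}\{I,\delta_0\}$. The subspace $V$ is closed under composition (since $\delta_0\circ I=I\circ\delta_0=\delta_0\circ\delta_0=\delta_0$), so $\A_g^{(n)}\subseteq V$ for every $n\ge 1$; the reverse inclusion is immediate from $I=g(0)^{-1}M_g$ and $\delta_0=g(0)^{-1}(M_g-S_g)\in\A_g^{(1)}$. Linear independence of $\{I,\delta_0\}$ follows by testing at $f(z)=z$, which kills the coefficient of $I$, and then at $f\equiv 1$, which kills the coefficient of $\delta_0$.

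For parts (b) and (c), suppose
\[
L\;=\;L_1+g(0)P(g_0)\,\delta_0\;=\;0,\qquad L_1:=\sum_{k=1}^n\sum_{j=0}^k c_{j,k}\,S_g^jT_g^{k-j},
\]
where $\deg P<n$ (and $P\equiv 0$ in case (b), since there $g(0)=0$). Let $m$ be the order of $L_1$, i.e.\ the largest $k$ with some $c_{j,k}\ne 0$, or $m=0$ if $L_1=0$. The plan is to separate $L_1$ from the $\delta_0$-term by testing on powers of $g_0$. Since $g_0(0)=0$, we have $\delta_0[(g_0)^{n_i}]=0$ for every $n_i\ge 1$, so $L[(g_0)^{n_i}]=L_1[(g_0)^{n_i}]$. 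If $m\ge 1$, Proposition \ref{prop:algebraLgk} applied to $L_1$ (with trivial polynomial part) yields some $n_i\in\N$ with $L_1[(g_0)^{n_i}]=Q(g_0)$ and $\deg Q=m+n_i\ge 1$; since $g$ is non-constant, Lemma \ref{lem:basis:for:Agn} forces $Q(g_0)\not\equiv 0$ in $\H(\D)$, contradicting $L=0$. Hence $m=0$ and all $c_{j,k}$ vanish, which completes case (b).

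In case (c) one is left with $g(0)P(g_0)\delta_0=0$; evaluating at $f\equiv 1$ gives $g(0)P(g_0)=0$ in $\H(\D)$, and since $g(0)\ne 0$ and $\{(g_0)^j\}_{j\ge 0}$ is linearly independent (Lemma \ref{lem:basis:for:Agn}), one concludes $P=0$. The assertions for $\A_g$ then follow by taking unions over $n$. The principal technical obstacle is the determinant computation behind Proposition \ref{prop:algebraLgk}, which pins down the precise leading degree of $L_1[(g_0)^{n_i}]$; once that is available, the argument is a clean two-step separation of the $ST$-part (detected on high powers of $g_0$) from the $\delta_0$-part (detected on the constant function $1$).
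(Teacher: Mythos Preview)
Your argument is essentially the paper's own proof: spanning via Proposition~\ref{prop:ST-decomposition}, linear independence via Proposition~\ref{prop:algebraLgk} and Lemma~\ref{lem:basis:for:Agn}, and the two-step separation (test on $(g_0)^{n_i}$ to kill the $\delta_0$-part, then on $1$ to kill the $ST$-part).

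There is, however, one step you skip in case~(c): you never check that the elements $(g_0)^j\delta_0$, $0\le j<n$, actually lie in $\A_g^{(n)}$. Proposition~\ref{prop:ST-decomposition} only gives $\A_g^{(n)}\subseteq\mathrm{span}(\text{proposed set})$; for the proposed set to be a basis of $\A_g^{(n)}$ (rather than of some possibly larger space of operators) you also need the reverse inclusion. The paper fills this by observing $\delta_0=g(0)^{-1}(M_g-S_g-T_g)\in\A_g^{(1)}$ when $g(0)\ne0$, and then $(g_0)^j\delta_0=j!\,T_g^j\delta_0\in\A_g^{(j+1)}\subseteq\A_g^{(n)}$ for $0\le j<n$, via~\eqref{eqn:Tg:on:powers:of:g-g(0)}. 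With that one line added, your proof is complete and coincides with the paper's.
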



\begin{proof}

\begin{enumerate}[label={\sffamily{\alph*)}},topsep=3pt, 
leftmargin=0pt, itemsep=3pt, wide, listparindent=0pt, itemindent=6pt] 
\item Assume $g\equiv c\ne0$. Then $T_g=0$, $S_g=cI-c\delta_0$ and $M_g=cI$, so both $\A^{(n)}_g$ and $\A_g$ are spanned (as vector spaces) by $I$ and $\delta_0$. On the other hand, $I$ and $\delta_0$ are linearly independent. Indeed, if $\alpha I+\beta\delta_0=0$, for some $\alpha,\beta\in\C$, then
$\alpha f=(\alpha I+\beta\delta_0)f=0$, for $f(z)=z$, so $\alpha=0$, and therefore 
$\beta=(\alpha I+\beta\delta_0)1=0$.
\item Assume $g$ is not constant and $g(0)=0$. Then Proposition~{\ref{prop:ST-decomposition}} shows that  $\A^{(n)}_g$ is spanned by~{\eqref{eqn:basis:g(0)=0}}. On the other hand, the linear independence of~{\eqref{eqn:basis:g(0)=0}} follows from
Proposition~{\ref{prop:algebraLgk}}. Indeed, if
\begin{equation}\label{eqn:independence:b}
\sum_{k=1}^n\sum_{j=0}^kc_{j,k}S_g^{k-j}T_g^j=0,
\end{equation}
where $c_{j,k}\in\C$, then $c_{j,k}=0$, for any $1\le k\le n$ and $0\le j\le k$, since otherwise Proposition~{\ref{prop:algebraLgk}} shows that there is some $\ell\in\N$ such that 
\[
\left(\sum_{k=1}^n\sum_{j=0}^kc_{j,k}S_g^{k-j}T_g^j\right)g^{\ell}=P(g),
\]  
where $P$ is a non-constant polynomial, which is absurd, taking into account~{\eqref{eqn:independence:b}} and Lemma~\ref{lem:basis:for:Agn}.

\item Assume $g$ is not constant and $g(0)\ne0$. First, note that~{\eqref{eqn:MSTdelta}} shows that $\delta_0=\frac1{g(0)}(M_g-S_g-T_g)\in\A^{(1)}_g$, and so~{\eqref{eqn:Tg:on:powers:of:g-g(0)}} gives that
\[
(g_0)^j\,\delta_0=j!\,(T_g^j1)\,\delta_0
=j!\,T_g^j\delta_0\in\A^{(n)}_g\quad(0\le j<n).
\]
On the other hand, since Proposition~{\ref{prop:ST-decomposition}} shows that  $\A^{(n)}_g$ is spanned by~{\eqref{eqn:basis:g(0)ne0}}, we only have to prove the linear independence of~{\eqref{eqn:basis:g(0)ne0}}.
 Assume that 
\begin{equation}\label{eqn:independence:c}
\sum_{k=1}^n\sum_{j=0}^kc_{j,k}S_g^{k-j}T_g^j
+P(g_0)\,\delta_0=0,
\end{equation}
where $c_{j,k}\in\C$ and $P$ is a polynomial. Then $c_{j,k}=0$, for any $1\le k\le n$ and $0\le j\le k$, since otherwise Proposition~{\ref{prop:algebraLgk}} shows that there is some $\ell\in\N$ such that 
\[
\left(\sum_{k=1}^n\sum_{j=0}^kc_{j,k}S_g^{k-j}T_g^j+P(g_0)\delta_0\right)
(g_0)^{\ell}=Q(g_0),
\]  
where $Q$ is a non-constant polynomial,  which is absurd, taking into account~{\eqref{eqn:independence:c}}  and Lemma~\ref{lem:basis:for:Agn}.
Therefore 
\[
P(g_0)=P(g_0)\,\delta_0 1=0,
\] 
and a second application of Lemma~{\ref{lem:basis:for:Agn}} gives that $P\equiv0$.\qedhere
\end{enumerate}	
\end{proof}

We end this section by giving a second application of Propositions~{\ref{prop:ST-decomposition}} 
and~{\ref{prop:algebraLgk}} (and Lemma~{\ref{lem:basis:for:Agn}}) which clarifies the concept of trivial $g$-operator. We recall that $L\in\A_g$ is trivial if $L=g(0)P(g_0)\delta_0$, for some polynomial~{$P$.}

\begin{proposition}\label{prop:char:trivial:g-operators}
Let $g\in\H(\D)$.
\begin{enumerate}[label={\sf\alph*)},topsep=3pt, leftmargin=*,itemsep=3pt] 
\item\label{item:char:trivial:g-operators:a} 
If $g(0)=0$ and $L=P(g)\delta_0\in\A_g$, for some polynomial $P$, then $L=0$. 
\item\label{item:char:trivial:g-operators:b}  
A $g$-operator $L$ is trivial if and only if  $L(z^{\ell})=0$, for every $\ell\in\N$.
\end{enumerate}
\end{proposition}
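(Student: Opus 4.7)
I would handle the two parts in turn, leveraging the $ST$-decomposition (Proposition~\ref{prop:ST-decomposition}), the computation of $L$ on powers of $g-g(0)$ (Proposition~\ref{prop:algebraLgk}), and the linear independence of the bases in Proposition~\ref{prop:basis:for:Agn}.

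For~\ref{item:char:trivial:g-operators:a}, I would argue by contradiction. Suppose $L=P(g)\,\delta_0$ is a non-zero element of $\A_g$ with $g(0)=0$; then necessarily $g$ is non-constant (since $g\equiv 0$ forces $\A_g=\{0\}$). Writing the $ST$-decomposition of $L$ and noting that the summand $g(0)Q(g_0)\,\delta_0$ vanishes when $g(0)=0$, we see that $L$ equals its own non-trivial part, which has some order $m\ge 1$. Proposition~\ref{prop:algebraLgk} (with the polynomial in that statement taken to be zero) then yields some $n\in\N$ for which $L[g^n]=L[(g-g(0))^n]$ is a polynomial in $g$ of degree $m+n\ge 2$. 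On the other hand, directly from the definition of $L$,
\[
L(g^n)=P(g)\,g(0)^n=0,
\]
since $g(0)=0$ and $n\ge 1$. This forces a non-constant polynomial in $g$ to vanish identically, contradicting Lemma~\ref{lem:basis:for:Agn}.

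The \emph{only if} direction of~\ref{item:char:trivial:g-operators:b} is immediate, since $z^\ell(0)=0$ for every $\ell\ge 1$. For the \emph{if} direction, I would first observe that $M_g$, $T_g$, $S_g$, and $\delta_0$ are all continuous on $\H(\D)$ endowed with the topology of uniform convergence on compacta, so $L$ is continuous as well. The hypothesis $L(z^\ell)=0$ for every $\ell\in\N$ then propagates, via the Taylor expansion of $f$ around the origin, to $L(f)=0$ for every $f\in\H(\D)$ with $f(0)=0$. Combined with linearity, this gives $Lf=f(0)\,L(1)$ for every $f\in\H(\D)$, so $L=h\,\delta_0$ where $h:=L(1)\in\H(\D)$.

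It remains to show that $L$ is trivial. Writing $L=L_1+g(0)\,Q(g_0)\,\delta_0$ via the $ST$-decomposition, iterating~\eqref{eqn:Tg:on:powers:of:g-g(0)} gives $T_g^k 1=g_0^k/k!$, and successive applications of Corollary~\ref{cor:T:S:polynomial:g0} then show that each $S_g^j T_g^{k-j}1$ is a polynomial in $g_0$. Hence $h=L_1(1)+g(0)Q(g_0)=R(g_0)$ for some polynomial $R$, which rearranges to $L_1=(R(g_0)-g(0)Q(g_0))\,\delta_0=:S(g_0)\,\delta_0$. A case analysis via Proposition~\ref{prop:basis:for:Agn} now finishes the argument: when $g$ is non-constant with $g(0)\ne 0$, $L_1$ simultaneously lies in the spans of the two disjoint basis pieces $\{S_g^j T_g^{k-j}\}$ and $\{g_0^j\,\delta_0\}$, forcing $L_1=0$; when $g$ is non-constant with $g(0)=0$, then $L=L_1=S(g)\,\delta_0$ is killed by part~\ref{item:char:trivial:g-operators:a}; the constant cases are immediate from the basis $\{I,\delta_0\}$ upon evaluating at $z$. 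The main conceptual hurdle, I expect, is showing that $h=L(1)$ lies in the polynomial algebra generated by $g_0$; once that is in hand, the clean basis description in Proposition~\ref{prop:basis:for:Agn} does the rest.
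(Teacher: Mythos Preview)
Your proof is correct and follows essentially the same route as the paper: part~\ref{item:char:trivial:g-operators:a} is argued identically (contradiction via Proposition~\ref{prop:algebraLgk} and Lemma~\ref{lem:basis:for:Agn}), and in part~\ref{item:char:trivial:g-operators:b} both you and the paper use continuity of $L$ on $\H(\D)$ to reduce to $L=(L1)\,\delta_0$ with $L1$ a polynomial in $g_0$.

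The only difference is in the final step of~\ref{item:char:trivial:g-operators:b}. Once you have $L=R(g_0)\,\delta_0$, the paper concludes in one line: if $g(0)\ne0$ this is $g(0)\bigl(\tfrac{1}{g(0)}R(g_0)\bigr)\delta_0$, hence trivial by definition; if $g(0)=0$ it is $P(g)\,\delta_0$, hence zero by part~\ref{item:char:trivial:g-operators:a}. Your detour through rewriting $L_1=S(g_0)\,\delta_0$ and invoking the basis of Proposition~\ref{prop:basis:for:Agn} to force $L_1=0$ is valid but unnecessary---it reproves, in this special case, what the definition of ``trivial'' already hands you. (On the other hand, you spell out why $L1$ is a polynomial in $g_0$, which the paper simply asserts.)
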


\begin{proof}
Assume that $g(0)=0$ and $L=P(g)\delta_0\in\A_g$, for some polynomial $P$. 
If $g$ is constant then $g\equiv0$, so $M_g=S_g=T_g=0$, 
and therefore $\A_g=0$, which gives that $L=0$. 
When $g$ is not constant we proceed by contradiction. 
Suppose that $L\ne0$. Then $L\in\A_g^{(m)}$, for some 
$m\in\N$, so Propositions~\ref{prop:ST-decomposition} 
and~\ref{prop:algebraLgk} show that there is $n\in\N$ 
such that $Lg^n=Q(g)$, where $Q$ is a polynomial of 
degree $m+n$. But, since $g(0)=0$, $Lg^n=0$, so 
$Q(g)=0$, and Lemma~\ref{lem:basis:for:Agn} implies 
that $Q\equiv0$, which is a contradiction and finishes 
the proof of part~\ref{item:char:trivial:g-operators:a}.

Finally, we prove part~\ref{item:char:trivial:g-operators:b}.
Now assume that $L\in\A_g$. If $L$ is trivial, it is 
clear that $L(z^{\ell})=0$, for every $\ell\in\N$.
On the other hand, if $L(z^{\ell})=0$, for any $\ell\in\N$, then $LP=L(P(0))=P(0)(L1)$, for any polynomial~{$P$.} Now the continuity of $L:\H(\D)\to\H(\D)$ implies that $Lf=f(0)(L1)$, for any $f\in\H(\D)$, that is, $L=(L1)\,\delta_0$. But $L1=P(g_0)$, where $P$ is a polynomial, and, by part~\ref{item:char:trivial:g-operators:a}, we conclude that $L$ is trivial.
\end{proof}

\section{Main results}
\label{sec:boundedness:g-operators}

We start this section by studying 
the behaviour of the iterates of $T_g$. 

\begin{proposition}\label{prop:T_g-powers} 
Let $g\in\H(\D)$. If $n\in\N$, $n>1$, and $T_g^n\in\BB(A^p_{\alpha})$, then $T_g\in\BB(A^p_{\alpha})$ and there exists a constant $c_n>0$, which only depends on $n$, such that
\begin{equation}\label{eqn:T_g-powers:1}
\|T_gf\|^n_{\alpha,p}\le c_n\,\|T^n_gf\|_{\alpha,p}\|f\|_{\alpha,p}^{n-1}
\qquad f\in A^p_\alpha,
\end{equation}
and so
\begin{equation}\label{eqn:T_g-powers:2}
\|T_g\|_{\alpha,p}^n\le c_n\,\|T_g^n\|_{\alpha,p}.
\end{equation}
In particular, $T^n_g\in\BB(A^p_{\alpha})$, for some $n\in\N$,  if and only $T^n_g\in\BB(A^p_{\alpha})$, for any $n\in\N$.

\end{proposition}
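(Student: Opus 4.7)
The plan is to establish \eqref{eqn:T_g-powers:1} in two stages: first reducing the general case $n>1$ to the base case $n=2$ via a log-concavity argument for the sequence $\|T_g^k f\|_{\alpha,p}$, and then proving the base case directly via a pointwise functional identity. Once \eqref{eqn:T_g-powers:1} is proven, \eqref{eqn:T_g-powers:2} follows by taking the supremum over $\|f\|_{\alpha,p}\le 1$, and the final equivalence is immediate since the trivial bound $\|T_g^n\|_{\alpha,p}\le\|T_g\|_{\alpha,p}^n$ takes care of the reverse implication.

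\emph{Reduction to $n=2$.} Assume the base case
\[
\|T_g h\|_{\alpha,p}^2\le c_2\,\|T_g^2 h\|_{\alpha,p}\,\|h\|_{\alpha,p}
\]
has been established for every $h\in A^p_\alpha$. Applying it with $h=T_g^{k-1}f$ and writing $a_k:=\|T_g^k f\|_{\alpha,p}$ yields the discrete log-concavity
\[
a_k^2\le c_2\,a_{k+1}\,a_{k-1}\qquad(k\ge 1).
\]
Setting $\delta_k:=\log a_{k+1}-\log a_k$, this rearranges as $\delta_k-\delta_{k-1}\ge-\log c_2$, hence $\delta_k\ge\delta_0-k\log c_2$. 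Summing from $k=0$ to $n-1$ gives $\log a_n-\log a_0\ge n(\log a_1-\log a_0)-\tfrac{n(n-1)}{2}\log c_2$, which is exactly
\[
a_1^n\le c_2^{n(n-1)/2}\,a_n\,a_0^{n-1},
\]
i.e.\ \eqref{eqn:T_g-powers:1} with $c_n=c_2^{n(n-1)/2}$. The finiteness of the intermediate $a_k$ for $k<n$ is handled by a density argument, first proving the estimate for $f$ in a dense subclass (polynomials, whose iterates remain in $A^p_\alpha$) and extending by continuity.

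\emph{Base case $n=2$.} The key is the pointwise functional identity
\[
(T_g f)^2=2\,T_g[f\cdot T_g f],
\]
verified by noting that both sides vanish at $0$ and have derivative $2(T_gf)\,g'f$. Since $d\mu_\alpha=(\alpha+1)(1-|z|^2)^\alpha dA$ is a probability measure on $\D$ for $\alpha>-1$ (with a Hardy-space analogue for $\alpha=-1$), Jensen's inequality yields $\|T_g f\|_{\alpha,p}\le\|T_g f\|_{\alpha,2p}$. Combining these,
\[
\|T_g f\|_{\alpha,p}^2\le\|T_g f\|_{\alpha,2p}^2=\|(T_g f)^2\|_{\alpha,p}=2\,\|T_g[f\cdot T_g f]\|_{\alpha,p}.
\]
The remaining task is to prove $\|T_g[f\cdot T_g f]\|_{\alpha,p}\lesssim\|T_g^2 f\|_{\alpha,p}\,\|f\|_{\alpha,p}$. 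The Littlewood--Paley characterization $\|T_g\phi\|_{\alpha,p}^p\asymp\int|g'\phi|^p(1-|z|^2)^{p+\alpha}dA$ reduces this to the weighted integral inequality
\[
\int|g'|^p|f|^p|T_g f|^p(1-|z|^2)^{p+\alpha}\,dA
\lesssim\|f\|_{\alpha,p}^p\,\|T_g^2 f\|_{\alpha,p}^p,
\]
in which the total mass of the auxiliary measure $d\nu:=|g'|^p|T_gf|^p(1-|z|^2)^{p+\alpha}dA$ is comparable to $\|T_g^2 f\|_{\alpha,p}^p$.

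\emph{Main obstacle.} The technical heart is this last integral estimate: because $\nu$ depends on $f$ through $T_g f$, it is not a Carleson measure for $A^p_\alpha$ in any uniform sense, and obtaining $\int|f|^p d\nu\lesssim\|f\|_{\alpha,p}^p\,|\nu|$ requires exploiting the specific interplay between $f$ and $T_g f$ rather than a generic embedding. I would approach it by a carefully chosen Cauchy--Schwarz factorization of the integrand, combined with the dilation bound from Proposition~\ref{prop:norm:dilations}, which lets one first argue under the assumption that the symbol is bounded (so all operators involved are bounded) and then pass to the limit $\lambda\to 1^{-}$ to recover the general case.
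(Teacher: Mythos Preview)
Your overall architecture—log-concavity reduction to $n=2$ plus the pointwise identity $(T_gf)^2=2\,T_g[f\cdot T_gf]$—is exactly the paper's strategy, and the reduction step matches the paper's inductive argument essentially verbatim.

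The genuine gap is in the base case. You insert a Jensen step, passing from $\|T_gf\|_{\alpha,p}^2$ up to $\|(T_gf)^2\|_{\alpha,p}$, and are then left needing
\[
\|T_g[f\cdot T_gf]\|_{\alpha,p}\lesssim\|T_g^2f\|_{\alpha,p}\,\|f\|_{\alpha,p}.
\]
This is the ``main obstacle'' you flag, and your proposed attack (a Cauchy--Schwarz factorization plus dilation) is not convincing: the measure $d\nu=|g'|^p|T_gf|^p(1-|z|^2)^{p+\alpha}dA$ depends on $f$ in a way that blocks any uniform Carleson-type embedding, and nothing in your sketch indicates where the missing cancellation would come from. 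In fact this $A^p_\alpha$-level inequality is strictly stronger than what is required, and it is not clear it even holds.

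The paper avoids the obstacle entirely by one change of viewpoint. Rewrite the same identity as
\[
\tfrac12(T_gf)^2=T_{T_g^2f}(f),
\]
i.e.\ regard the integral as the operator $T_G$ with symbol $G=T_g^2f$ acting on $f$, rather than as $T_g$ acting on the product $f\cdot T_gf$. Now take the $A^{p/2}_\alpha$-norm instead of the $A^p_\alpha$-norm: since $\|(T_gf)^2\|_{\alpha,p/2}=\|T_gf\|_{\alpha,p}^2$ is an \emph{equality}, no Jensen is needed, and Lemma~\ref{lem:p-to-q} (the H\"older-type bound $\|T_Gh\|_{\alpha,q}\lesssim\|G\|_{\alpha,r}\|h\|_{\alpha,s}$ for $\tfrac1r+\tfrac1s=\tfrac1q$) with $r=s=p$, $q=p/2$ gives
\[
\|T_gf\|_{\alpha,p}^2=2\,\|T_{T_g^2f}f\|_{\alpha,p/2}\lesssim\|T_g^2f\|_{\alpha,p}\,\|f\|_{\alpha,p}
\]
in one line. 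Your obstacle is an artifact of working at the wrong exponent: at $q=p/2$ the H\"older indices match and the estimate is immediate.
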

In order to prove Proposition~{\ref{prop:T_g-powers}} we need the following useful result, which is proved in 
\cite[Thm.\ 1 (i)]{Aleman:Cima} for $\alpha=-1$, while for $\alpha>-1$ it is a direct consequence of H\"older's inequality and the fact that the differentiation operator $f\mapsto f'$ is a topological isomorphism from
$A^p_{\alpha}(0)=\{f\in A^p_ {\alpha}:f(0)=0\}$ onto $A^p_{\alpha+p}$ 
\cite[Thm.\ 4.28]{Zhu}.

\begin{lemma}\label{lem:p-to-q} Let $r,q,s>0,~\frac1{r}+\frac1{s}=\frac1{q}$, and $g\in A_{\alpha}^r$.  Then $T_g:A_{\alpha}^s\to A_{\alpha}^q$ is bounded and there exists a constant $c>0$, independent of $g$,  satisfying that $\|T_g\|_{A_{\alpha}^s\to A_{\alpha}^q}\le c\,\|g\|_{\alpha,r}$.  
\end{lemma}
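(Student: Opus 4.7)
The plan is to treat the two cases $\alpha=-1$ and $\alpha>-1$ separately and use the hint given right after the statement. For $\alpha=-1$, I would just cite \cite[Thm.~1(i)]{Aleman:Cima}, where this mapping property from $H^s$ to $H^q$ for $g\in H^r$ is already proved; there is nothing to do.

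For $\alpha>-1$, I would exploit the topological isomorphism $f\mapsto f'$ from $A^p_\alpha(0)$ onto $A^p_{\alpha+p}$ (\cite[Thm.~4.28]{Zhu}) at both ends. Since $(T_gf)(0)=0$, the isomorphism with $p=q$ gives
\[
\|T_gf\|_{\alpha,q}\simeq \|(T_gf)'\|_{\alpha+q,q}=\|fg'\|_{\alpha+q,q},
\]
so the task reduces to bounding $\|fg'\|_{\alpha+q,q}$ by $\|f\|_{\alpha,s}\|g\|_{\alpha,r}$. The key step is a weighted H\"older inequality with conjugate exponents $s/q$ and $r/q$ (which are conjugate because $q/s+q/r=q(1/s+1/r)=1$), where one splits the weight as
\[
(1-|z|^2)^{\alpha+q}=(1-|z|^2)^{\alpha q/s}\cdot (1-|z|^2)^{\alpha+q-\alpha q/s}.
\]
Applying H\"older then gives
\[
\|fg'\|_{\alpha+q,q}^q\le \|f\|_{\alpha,s}^q\biggl(\int_{\D}|g'|^{r}(1-|z|^2)^{(\alpha+q-\alpha q/s)r/q}\,dA\biggr)^{q/r}.
\]
The arithmetic check that makes everything fit is
\[
\bigl(\alpha+q-\tfrac{\alpha q}{s}\bigr)\tfrac{r}{q}
=\alpha r\bigl(\tfrac1q-\tfrac1s\bigr)+r=\tfrac{\alpha r}{r}+r=\alpha+r,
\]
using $\frac1q-\frac1s=\frac1r$. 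Thus the second factor is exactly $\|g'\|_{\alpha+r,r}^q$.

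To finish, I would apply the isomorphism a second time, now with $p=r$, to obtain $\|g'\|_{\alpha+r,r}\simeq \|g-g(0)\|_{\alpha,r}\le \|g\|_{\alpha,r}+|g(0)|\,\|1\|_{\alpha,r}$, and then absorb $|g(0)|$ into $\|g\|_{\alpha,r}$ by the standard pointwise evaluation estimate $|g(0)|\lesssim \|g\|_{\alpha,r}$, which is valid because point evaluations are bounded on $A^r_\alpha$ for $\alpha>-1$. Combining these inequalities produces the desired bound $\|T_gf\|_{\alpha,q}\le c\,\|g\|_{\alpha,r}\|f\|_{\alpha,s}$ with a constant $c$ that depends only on $r,s,q,\alpha$ (and not on $g$), which is what the lemma asserts.

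The only genuinely delicate point is checking the exponent identity in the H\"older step; everything else is bookkeeping. I do not foresee a real obstacle, since each ingredient (the Zhu isomorphism, H\"older, the pointwise estimate) is standard and the algebraic identity $\frac1q-\frac1s=\frac1r$ is exactly what makes the two applications of the Zhu isomorphism compatible.
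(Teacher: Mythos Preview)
Your proposal is correct and follows precisely the route indicated by the paper itself: cite \cite[Thm.~1(i)]{Aleman:Cima} for $\alpha=-1$, and for $\alpha>-1$ combine the differentiation isomorphism $A^p_\alpha(0)\to A^p_{\alpha+p}$ from \cite[Thm.~4.28]{Zhu} (applied with $p=q$ and $p=r$) with H\"older's inequality using exponents $s/q$ and $r/q$. The exponent computation you highlight is exactly the point where the hypothesis $\tfrac1r+\tfrac1s=\tfrac1q$ is used, and your handling of the $g(0)$ term via the pointwise estimate is the standard way to close the argument.
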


\begin{proof}[\bf Proof of Proposition~{\ref{prop:T_g-powers}}]  
Note that $T_g^n1$ is a polynomial of degree $n$ in $g$, so that $g^k\in A_\al^p,~1\le k\le n$.  Inductively it follows easily that   $g^k\in A_\al^p$, for all $k\ge 1$. Then using integration by parts we see that  $T_g^kf\in A_\al^p$ whenever $k\ge 1$ and $f$ is a polynomial.
For $k>1$, we apply Lemma~{\ref{lem:p-to-q}}  with $r=s=p,~q=p/2$, to conclude that if $f$ is a polynomial and $h\in A_\al^p$ then $T_{T_g^kf}h\in A_\al^{\frac{p}{2}}$ with   
\[
\|T_{T_g^kf}h\|_{\al,\frac{p}{2}}\lesssim \|T_g^kf\|_{\al,p}\|h\|_{\al,p}.
\]
Now, for $k\ge2$, let $h=T^{k-2}_gf$ and note that 
\[
T_{T_g^kf}h(z)=\int_0^zg'(\zeta)(T_g^{k-2}f)(\zeta)
(T_g^{k-1}f)(\zeta)\,d\zeta=\tfrac1{2}(T_g^{k-1}f)^2(z).
\]
Since $\|(T_g^{k-1}f)^2\|_{\alpha,\frac{p}2}=\|T_g^{k-1}f\|^2_{\al, p}$, this leads to the estimate 
\begin{equation}\label{eqn:T_g-powers:estimate1}
\|T_g^{k-1}f\|^2_{\al, p}\lesssim\|T_g^kf\|_{\al,p}\|T_g^{k-2}f\|_{\al,p}
\qquad(k\ge2).
\end{equation}
	
By induction on $j\ge 1$, from~{\eqref{eqn:T_g-powers:estimate1}} we obtain \begin{equation}\label{eqn:T_g-powers:estimatej}
\|T_g^{k-j}f\|^{j+1}_{\al, p}\lesssim\|T_g^kf\|_{\al,p}\|T_g^{k-j-1}f\|^j_{\al,p}
\qquad(k\ge j+1).
\end{equation}
Indeed, assume that 
\begin{equation}\label{eqn:induc-j-1}
\|T_g^{k-j+1}f\|^{j}_{\al, p}\lesssim\|T_g^kf\|_{\al,p}\|T_g^{k-j}f\|^{j-1}_{\al,p}
\qquad(k\ge j),
\end{equation}
and we want to obtain \eqref{eqn:T_g-powers:estimatej}.
	
By \eqref{eqn:T_g-powers:estimate1}, for each $k\ge j+1$ we have
\[
\|T_g^{k-j} f\|_{\alpha,p}^{2j}
\lesssim \|T_g^{k-j+1} f\|_{\alpha,p}^{j}
\|T_g^{k-j-1} f\|_{\alpha,p}^j.
\]
Now, by \eqref{eqn:induc-j-1}, we obtain
\[
\|T_g^{k-j} f\|_{\alpha,p}^{2j}
\lesssim \|T_g^{k} f\|_{\alpha,p}
\|T_g^{k-j} f\|_{\alpha,p}^{j-1}\|T_g^{k-j-1} f\|_{\alpha,p}^j,
\]
which proves \eqref{eqn:T_g-powers:estimatej}. 

Finally, the estimates~{\eqref{eqn:T_g-powers:estimate1}}
and~{\eqref{eqn:T_g-powers:estimatej}}  for $k=2$ and $k-j=2$, respectively, give that
\begin{align*}
\|T_gf\|^2_{\alpha,p}
&\lesssim\|T_g^2f\|_{\alpha,p}\|f\|_{\alpha,p}\\ 
\|T_g^2f\|_{\al,p}^{k-1}
&\lesssim\|T_g^kf\|_{\al,p}\|T_gf\|_{\al,p}^{k-2}
\qquad(k\ge 3).
\end{align*}
Therefore  
\[
\|T_gf\|_{\alpha,p}^{2(k-1)}
\lesssim \|T_g^2f\|_{\alpha,p}^{k-1}\|f\|_{\alpha,p}^{k-1}
\lesssim \|T_g^kf\|_{\alpha,p} \|T_gf\|_{\alpha,p}^{k-2}\|f\|_{\alpha,p}^{k-1}
\qquad(k\ge3),
\]
and so
\[
\|T_gf\|_{\alpha,p}^k
\lesssim \|T_g^kf\|_{\alpha,p}\|f\|_{\alpha,p}^{k-1},\quad \mbox{for any polynomial $f$}
\qquad(k\ge2).
\]
In particular, if $k=n$, bearing in mind that the polynomials are dense in $A^p_\alpha$, 
the preceding estimate shows that~{\eqref{eqn:T_g-powers:1}} holds, and, as a consequence, \eqref{eqn:T_g-powers:2} also holds. Hence $T_g\in\BB(A^p_{\alpha})$.
\end{proof}

For $h\in\H(\D)$ and  $\lambda\in\overline{\D}$, let us consider the dilated functions
\[
h_{\lambda}(z):=h(\lambda z), \qquad z\in\D.
\] 
The map $h\mapsto h_\lambda$  is a linear contractive  operator on $A^p_{\alpha}$. Moreover, 
\begin{equation}\label{eq:dilation:MST}
(M_gf)_\lambda=M_{g_\lambda}f_\lambda\qquad
(S_gf)_\lambda=S_{g_\lambda}f_\lambda\qquad
(T_gf)_\lambda=T_{g_\lambda}f_\lambda.
\end{equation}
Now a repeated application of~{\eqref{eq:dilation:MST}} shows that
\begin{equation}\label{eq:dilation:operator:function}
L_{g_{\lambda}}f_{\lambda}=(L_gf)_{\lambda}\qquad(L_g\in\A_g).
\end{equation}

The following result is a key tool in our study of the boundedness of operators in $\A_g$.

\begin{proposition}\label{prop:norm:dilations}  
Let $g\in\H(\D)$ and let $L_g\in\A_g$.
If  $L_g\in\BB(A_\al^p)$  then $L_{g_{\lambda}}\in\BB(A_\al^p)$ and $\|L_{g_{\lambda}}\|_{\alpha,p}\le \|L_g\|_{\alpha,p}$, for any $\lambda\in\overline{\D}$.
Moreover, if ${\displaystyle\varliminf_{r\nearrow1}\|L_{g_r}\|_{\alpha,p}<\infty}$,
then $L_g\in\BB(A_{\alpha}^p)$ and 
${\displaystyle\|L_g\|_{\alpha,p}=\varliminf_{r\nearrow1}\|L_{g_r}\|_{\alpha,p}}$.
 \end{proposition}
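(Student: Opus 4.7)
My plan is to prove part a) via a subharmonicity/Poisson-integral argument, then deduce part b) from a) together with monotone convergence of dilations.

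For part a), I would fix $h\in A^p_\alpha$ and consider $\Phi(\lambda):=L_{g_\lambda}h\in\H(\D)$. The crucial observation is that for each fixed $z\in\D$, the scalar map $\lambda\mapsto L_{g_\lambda}h(z)$ is analytic on the disc $D(0,1/|z|)$, which strictly contains $\overline{\D}$. Indeed, this is transparent for each basic paraproduct—for instance,
\[
T_{g_\lambda}h(z)=\int_0^z h(\zeta)\,\lambda\,g'(\lambda\zeta)\,d\zeta,
\]
whose integrand is analytic in $\lambda$ on $D(0,1/|\zeta|)\supset D(0,1/|z|)$ uniformly in $\zeta\in[0,z]$. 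The property propagates to sums and compositions via the ST-decomposition (Proposition~\ref{prop:ST-decomposition}), treating also the $\delta_0$-term $(g_\lambda-g_\lambda(0))^j\,\delta_0h=(g(\lambda z)-g(0))^jh(0)$. Hence $|\Phi(\lambda)(z)|^p$ is subharmonic in $\lambda$ on a neighborhood of $\overline{\D}$ and satisfies the Poisson inequality
\[
|\Phi(\lambda)(z)|^p\le\int_0^{2\pi}P_\lambda(\theta)\,|\Phi(e^{i\theta})(z)|^p\,\frac{d\theta}{2\pi},\qquad\lambda\in\D,
\]
with Poisson kernel $P_\lambda(\theta)=(1-|\lambda|^2)/|e^{i\theta}-\lambda|^2$.

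Next I would control the boundary. For $\mu\in\T$, the rotation $U_\mu f=f_\mu$ is an isometry of $A^p_\alpha$ with inverse $U_{\bar\mu}$ (by rotation-invariance of $dA$, and this works for all $p>0$). Combining this with the dilation identity~\eqref{eq:dilation:operator:function} applied to $f=U_{\bar\mu}h$ yields $L_{g_\mu}h=U_\mu L_gU_{\bar\mu}h$, so $\|\Phi(\mu)\|_{\alpha,p}\le\|L_g\|_{\alpha,p}\|h\|_{\alpha,p}$ for every $\mu\in\T$. Integrating the Poisson inequality over $\D$ against $dA_\alpha$ and applying Fubini (legal because the integrand is non-negative) together with the fact that $P_\lambda$ has mean one on $\T$ gives
\[
\|L_{g_\lambda}h\|^p_{\alpha,p}\le\int_0^{2\pi}P_\lambda(\theta)\,\|L_{g_{e^{i\theta}}}h\|^p_{\alpha,p}\,\frac{d\theta}{2\pi}\le\|L_g\|^p_{\alpha,p}\|h\|^p_{\alpha,p},
\]
which proves $\|L_{g_\lambda}\|_{\alpha,p}\le\|L_g\|_{\alpha,p}$ for all $\lambda\in\overline{\D}$.

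For part b), I would use that for any $f\in\H(\D)$ the map $r\mapsto\|f_r\|_{\alpha,p}$ is non-decreasing on $(0,1)$ (since $M_p(s,f)$ is non-decreasing in $s$ by subharmonicity of $|f|^p$) and tends to $\|f\|_{\alpha,p}$ as $r\nearrow 1$. Applying this with $f$ replaced by $L_gf$ and invoking the dilation identity $(L_gf)_r=L_{g_r}f_r$ gives
\[
\|L_gf\|_{\alpha,p}=\lim_{r\nearrow 1}\|L_{g_r}f_r\|_{\alpha,p}\le\varliminf_{r\nearrow 1}\|L_{g_r}\|_{\alpha,p}\cdot\|f\|_{\alpha,p},
\]
where the last inequality uses $\|f_r\|_{\alpha,p}\le\|f\|_{\alpha,p}$. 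Hence $\|L_g\|_{\alpha,p}\le\varliminf_{r\nearrow 1}\|L_{g_r}\|_{\alpha,p}$, and in particular the finiteness of this $\varliminf$ guarantees $L_g\in\BB(A^p_\alpha)$; the reverse inequality is then immediate from part a).

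The main obstacle I anticipate is justifying the analyticity of $\lambda\mapsto L_{g_\lambda}h(z)$ on a neighborhood of $\overline{\D}$ for each $z\in\D$. This must be extracted carefully from the ST-decomposition, and one has to note that although the disc of analyticity $D(0,1/|z|)$ shrinks as $|z|\nearrow 1$, it always strictly contains $\overline{\D}$, which is exactly what the Poisson inequality needs. A minor but worth-noting point is the case $p<1$: since $A^p_\alpha$ is merely a quasi-Banach space, one works throughout at the level of the $p$-th power $\|\cdot\|^p_{\alpha,p}$, which is an honest integral of subharmonic quantities, so the whole argument goes through unchanged.
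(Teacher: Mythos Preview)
Your argument is correct and rests on the same core idea as the paper---subharmonicity of $\lambda\mapsto\|L_{g_\lambda}h\|_{\alpha,p}^p$---but the execution is streamlined in a pleasant way. The paper restricts first to polynomial test functions $f$, proves that $F(\lambda)=L_{g_\lambda}f$ is continuous as a map $\overline{\D}\to A^p_\alpha$ (this needs a separate argument at boundary points $\zeta\in\T$, exploiting that $f_{1/\lambda}$ makes sense for polynomials), applies the maximum principle to the subharmonic function $u_f(\lambda)=\|L_{g_\lambda}f\|_{\alpha,p}^p$, and finally passes to arbitrary $f$ by density. Your observation that for each fixed $z\in\D$ the scalar function $\lambda\mapsto L_{g_\lambda}h(z)$ is analytic on the strictly larger disc $D(0,1/|z|)$ makes the pointwise Poisson inequality automatic; integrating in $z$ via Tonelli then bypasses both the boundary-continuity verification and the density step. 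For part (b) the two arguments coincide: the paper invokes Fatou's lemma, which is of course implied by your monotone-convergence statement $\|F_r\|_{\alpha,p}\nearrow\|F\|_{\alpha,p}$.
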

\begin{proof}

First note that, for any $\lambda\in\T$,  {\eqref{eq:dilation:operator:function}}~gives that
$L_{g_{\lambda}}f=(L_gf_{\overline{\lambda}})_{\lambda}$
and, since $f\mapsto f_{\lambda}$ is an invertible isometry on $A_\al^p$, it follows  that  
$L_{g_{\lambda}}\in\BB(A_\al^p)$ and $\|L_{g_{\lambda}}\|_{\alpha,p}=\|L_g\|_{\alpha,p}$.
If $\lambda\in \D$, then $g_{\lambda}\in\H(\overline{\D})$, so $M_{g_{\lambda}},S_{g_{\lambda}},T_{g_{\lambda}}\in\BB(A^p_{\alpha})$, and, as a consequence, $L_{g_{\lambda}}\in\BB(A^p_{\alpha})$.

   In order to estimate the operator norm of $L_{g_{\lambda}}$, let $f$ be a polynomial  and  observe that, for fixed $z\in \D$, the function $\lambda\mapsto L_{g_{\lambda}}f(z)$ is analytic on $\D$. Indeed, this is an inmediate consequence of the fact that if $(\lambda,z)\mapsto h(\lambda,z)$ is an analytic function on the bidisc $\D^2$ then 
\[
M_{g_{\lambda}}h(\lambda,\cdot)(z),\quad S_{g_{\lambda}}h(\lambda,\cdot)(z)\quad\mbox{and}\quad T_{g_{\lambda}}h(\lambda,\cdot)(z)
\] 
are also analytic functions of $(\lambda,z)$ on $\D^2$.

Next we are going to show that $F(\lambda)=L_{g_{\lambda}}f$ defines a continuous mapping from $\overline{\D}$ to 
$A^p_{\alpha}$. 

Assume first that $\zeta\in\D$.
For each $z\in\D$ the function 
$\lambda\mapsto L_{g_{\lambda}} f(z)$ is analytic on $\D$, which  implies  that $L_{g_{\lambda}} f(z)\to L_{g_{\zeta}}f(z)$, as $\lambda\to\zeta$. 
Since $L_{g_{\lambda}} f$ is uniformly bounded on $\D$, for $|\lambda-\zeta|<\frac12(1-|\zeta|)$,
 the Dominated Convergence Theorem shows that $\|F(\lambda)-F(\zeta)\|_{\al,p}\to0$, as $\lambda\to\zeta$.

If $\zeta\in\T$, we write,  by abuse of notation,  $f_{1/\lambda}(z)=f(z/\lambda)$, which  is well defined for  a polynomial $f$ and $\lambda\in\C\setminus\{0\}$. 
Then, by~{\eqref{eq:dilation:operator:function}}, for any $\lambda\in\overline{\D}\setminus\{0\}$ we have that
\[
F(\lambda)-F(\zeta)=L_{g_{\lambda}}f-L_{g_{\zeta}} f=
( L_gf_{1/\lambda}-L_gf_{\overline{\zeta}})_{\lambda}+
		(L_gf_{\overline{\zeta}})_{\lambda}-(L_gf_{\overline{\zeta}})_\z,
\] 
and so
\begin{align*}
\|F(\lambda)-F(\zeta)\|_{\al,p} 
&\le c \left[\|(L_gf_{1/\lambda}
                -L_gf_{\overline{\zeta}})_{\lambda}\|_{\al,p}
  +\|(L_gf_{\overline{\zeta}})_{\lambda}
               -(L_gf_{\overline{\zeta}})_\z\|_{\al,p}\right] \\
&\le c\left[\|L_gf_{1/\lambda}-L_gf_{\overline{\zeta}} \|_{\al,p}
  +\|(L_gf_{\overline{\zeta}})_{\lambda}
                -(L_gf_{\overline{\zeta}})_\z\|_{\al,p}\right] \\
&\le c\left[\|L_g\|\|f_{1/\lambda}
                -f_{\overline{\zeta}}\|_{\al,p}+
		\| (L_gf_{\overline{\zeta}})_{\lambda}-(L_gf_{\overline{\zeta}})_\z \|_{\al,p}\right],
\end{align*}
where $c=1$ if $p\ge 1$, and  $c=2^{1/p}$ if $0<p<1$.	
		Recall that $f$ is a polynomial and use the  elementary fact that, for $h\in A_{\al}^p$, $\|h_{\lambda}- h_\z\|_{\al,p}\to 0$, as $\lambda\to\zeta$,  to conclude that the right hand side converges to $0$ and therefore $\|F(\lambda)-F(\zeta)\|_{\al,p}\to0$, as $\lambda\to\zeta$. 

Hence we have just proved that $F:\overline{\D}\to A^p_{\alpha}$ is continuous, and, as a consequence, the function 
$u_f:\overline{\D}\to\C$, defined by
\[
u_f(\lambda)=\|F(\lambda)\|_{\al,p}^p=\|L_{g_{\lambda}}f\|_{\al,p}^p,
\]
 is also continuous. Moreover, since, for fixed $z\in\D$, $L_{g_{\lambda}}f(z)$ is an analytic function on $\lambda$, it is clear that $u_f$ is subharmonic in $\D$. It follows that $u_f$ attains its maximum at some point $\zeta\in\T$, which gives that
\[
\|L_{g_{\lambda}}f\|_{\alpha,p}=u_f(\lambda) \le \|L_{g_{\zeta}} f\|_{\alpha,p}\le \|L_{g_{\zeta}}\|\|f\|_{\al,p}=\|L_g\|\|f\|_{\alpha,p},
\]
for any $\lambda\in\overline{\D}$ and for any polynomial $f$.
Since the polynomials are dense in $A_\al^p$, 
we conclude that $\|L_{g_{\lambda}}\|_{\alpha,p}\le\|L_g\|_{\alpha,p}$.

Finally, for any $f\in A^p_{\alpha}$, Fatou's lemma shows that
\begin{align*}
\|L_gf\|_{\alpha,p}
&\le\varliminf_{r\nearrow1}\|(L_gf)_r\|_{\alpha,p}
=\varliminf_{r\nearrow1}\|L_{g_r}f_r\|_{\alpha,p} \\
&\le \varliminf_{r\nearrow1}\|L_{g_r}\|_{\alpha,p}\|f_r\|_{\alpha,p}
\le \varliminf_{r\nearrow1}\|L_{g_r}\|_{\alpha,p}\|f\|_{\alpha,p}.\qedhere
\end{align*}
\end{proof}

\subsection{Proof of Theorem~{\ref{thm:algebraimpliesginBloch:introduction}}}
 From now on we shall use repeatedly the following elementary fact:

\begin{remark}\label{rem:real-variable:boundedness}
 If a function $\varphi:[0,\infty)\to \mathbb{R}$ satisfies $\lim_{x\to\infty}\varphi(x)=\infty$, then the preimage by  $\varphi$ of any bounded set of real numbers is bounded.
\end{remark}

We will also need a couple of preliminary results.

\begin{lemma}\label{lem:P(g):bounded}
Let $g\in\H(\D)$ and let $P$ be a polynomial of degree $n\ge1$. If $P(g)\in H^{\infty}$, then $g\in H^{\infty}$.  
\end{lemma}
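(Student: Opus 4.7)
My plan is to use exactly the principle recorded in Remark~\ref{rem:real-variable:boundedness}, applied to the real-variable function $\varphi(x)=\min_{|w|=x}|P(w)|$. Since $P$ is a polynomial of degree $n\ge 1$, its leading term dominates at infinity, so $\varphi(x)\to\infty$ as $x\to\infty$. In particular, given the finite constant $M:=\|P(g)\|_{H^\infty}$, I can choose $R>0$ so that $|P(w)|>M$ whenever $|w|>R$.

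With this $R$ fixed, the argument is a one-line contradiction: if there were some $z_0\in\D$ with $|g(z_0)|>R$, then $|P(g)(z_0)|=|P(g(z_0))|>M=\|P(g)\|_{H^\infty}$, which is absurd. Hence $|g(z)|\le R$ for every $z\in\D$, i.e.\ $g\in H^{\infty}$ with $\|g\|_{H^\infty}\le R$.

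There is no real obstacle here; the only substantive ingredient is the fact that a non-constant polynomial is proper as a map $\C\to\C$, which is elementary. The lemma is a preparatory observation to be used later (presumably to show that boundedness of compositions like $M_g^k=M_{g^k}$ forces $g\in H^\infty$, via $P(w)=w^k$), and the proof can be stated in just a few lines.
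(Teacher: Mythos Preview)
Your proof is correct and follows essentially the same route as the paper: both arguments invoke Remark~\ref{rem:real-variable:boundedness} via a function $\varphi(x)\to\infty$ that bounds $|P(w)|$ from below on $|w|=x$. The only cosmetic difference is that the paper uses the explicit reverse-triangle-inequality bound $\varphi(x)=|a_n|x^n-\sum_{k=0}^{n-1}|a_k|x^k$, whereas you take $\varphi(x)=\min_{|w|=x}|P(w)|$; the substance is identical.
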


\begin{proof}
Assume that $P(g)\in H^{\infty}$, where $P(z)=\sum_{k=0}^na_kz^k$ is a polynomial of degree $n\ge1$. Then 
\[
|a_n||g(z)|^n-\sum_{k=0}^{n-1} |a_k||g(z)|^k\le\|P(g)\|_{\infty}\qquad(z\in\D),
\] 
and so Remark~{\ref{rem:real-variable:boundedness}}
completes the proof.
\end{proof}

\begin{lemma}\label{lem:polynomial:Tg:bounded}
Let $g\in\H(\D)$ and  let $P$ be a polynomial of degree $n\ge1$. If $P(T_g)\in\BB(A^p_{\alpha})$, then $T_g\in\BB(A^p_{\alpha})$.  
\end{lemma}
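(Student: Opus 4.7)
The plan is to combine the dilation invariance from Proposition~\ref{prop:norm:dilations} with the power estimate of Proposition~\ref{prop:T_g-powers}. Write $P(z)=\sum_{k=0}^n a_k z^k$ with $a_n\ne 0$. First I would observe that $P(T_g)$ lies in $\A_g$, and for $\lambda\in\overline{\D}$ its dilation is exactly $P(T_{g_\lambda})$, so Proposition~\ref{prop:norm:dilations} gives the uniform control
\[
\|P(T_{g_\lambda})\|_{\alpha,p}\le\|P(T_g)\|_{\alpha,p}=:C\qquad(\lambda\in\overline{\D}).
\]
For $\lambda\in\D$ the dilation $g_\lambda$ extends analytically across $\T$, so $g_\lambda\in H^\infty$ and Theorem~\ref{thm:TSM-bounded} together with Proposition~\ref{prop:SM-bounded-compact} guarantees that $T_{g_\lambda}\in\BB(A^p_\alpha)$; set $c_\lambda:=\|T_{g_\lambda}\|_{\alpha,p}<\infty$.

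Next I would isolate the highest power. Applying the (quasi-)triangle inequality to the identity $a_n T_{g_\lambda}^n=P(T_{g_\lambda})-\sum_{k=0}^{n-1}a_k T_{g_\lambda}^k$, together with the trivial bound $\|T_{g_\lambda}^k\|_{\alpha,p}\le c_\lambda^k$, gives
\[
|a_n|\,\|T_{g_\lambda}^n\|_{\alpha,p}\le C+\sum_{k=0}^{n-1}|a_k|\,c_\lambda^k
\]
when $p\ge1$, and the analogous inequality with $p$-th powers throughout when $0<p<1$ (using $\|T+S\|_{\alpha,p}^p\le\|T\|_{\alpha,p}^p+\|S\|_{\alpha,p}^p$ for the operator quasi-norm). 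Now Proposition~\ref{prop:T_g-powers} applied to the bounded operator $T_{g_\lambda}$ yields $c_\lambda^n\le c_n\,\|T_{g_\lambda}^n\|_{\alpha,p}$, and combining the two inequalities produces a polynomial estimate of the form
\[
c_\lambda^n\le A+B\sum_{k=0}^{n-1}c_\lambda^k,
\]
with constants $A,B$ depending only on $n$, $P$ and $C$, not on $\lambda$. Since $\varphi(x):=x^n-B\sum_{k=0}^{n-1}x^k$ tends to $+\infty$ as $x\to\infty$, Remark~\ref{rem:real-variable:boundedness} forces the set $\{c_\lambda:\lambda\in\D\}$ to be bounded.

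Finally, restricting to $\lambda=r\in[0,1)$ gives $\varliminf_{r\nearrow1}\|T_{g_r}\|_{\alpha,p}<\infty$, so the second half of Proposition~\ref{prop:norm:dilations} lets me conclude $T_g\in\BB(A^p_\alpha)$. The only delicate point is the triangle-inequality step when $0<p<1$; this is handled by the standard $p$-triangle inequality for operator quasi-norms, after which the same polynomial-growth argument in $c_\lambda^p$ carries through without change.
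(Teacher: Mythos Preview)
Your proof is correct and follows essentially the same route as the paper's: dilate via Proposition~\ref{prop:norm:dilations}, isolate the top-degree term, invoke Proposition~\ref{prop:T_g-powers} to turn $\|T_{g_\lambda}^n\|$ into $\|T_{g_\lambda}\|^n$, apply Remark~\ref{rem:real-variable:boundedness}, and conclude with the second half of Proposition~\ref{prop:norm:dilations}. The only cosmetic difference is that the paper works directly with radial dilations $g_r$ and absorbs the $0<p<1$ quasi-triangle constant into a single coefficient $c_{n,p}$, whereas you treat general $\lambda\in\D$ and handle the $p<1$ case by passing to $p$-th powers; both are equivalent.
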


\begin{proof}
Assume that $P(T_g)\in\BB(A^p_{\alpha})$, where $P(z)=\sum_{k=0}^na_kz^k$ is a polynomial of degree $n\ge1$. Then, by Proposition~{\ref{prop:norm:dilations}},  
\[
|a_n| \|T_{g_r}^{n}\|_{\alpha,p}
-c_{n,p}\sum_{k=0}^{n-1} |a_k|\|T_{g_r}\|^k_{\alpha,p}
\le\|P(T_{g_r})\|_{\alpha,p}\le\|P(T_g)\|_{\alpha,p}
\quad(0<r<1).
\]
Now Proposition~{\ref{prop:T_g-powers}} shows that
$\|T_{g_r}^{n}\|_{\alpha,p}\ge c_n\|T_{g_r}\|_{\alpha,p}^{n}$, for some constant $c_{n}>0$ only dependent on~{$n$}. Thus $\varphi(\|T_{g_r}\|_{\alpha,p})\le\|P(T_g)\|_{\alpha,p}$, for every $0<r<1$, where 
$\varphi(x)=c_n|a_n|x^n-c_{n,p}\sum_{k=0}^{n-1}|a_k|x^k$.   
Hence Remark~{\ref{rem:real-variable:boundedness}} 
and Proposition ~{\ref{prop:norm:dilations}} end the proof.
\end{proof}

\begin{proof}[\bf Proof of Theorem \ref{thm:algebraimpliesginBloch:introduction}
\ref{item:thm:algebraimpliesginBloch:introduction:2}]
 Let be $P(z)=a_{m} z^{m}+Q(z)$, where $Q$ is a polynomial of degree less than $m$. Then, by~{\eqref{eq:dilation:operator:function}}  
\[
g(0)^2\, P((g_0)_r)=(L_gg)_r=L_{g_r}g_r
\qquad(0<r<1)
\]
so, since $\|L_{g_r}g_r\|_{\alpha,p}\le\|L\|_{\alpha,p}\|g_r\|_{\alpha,p}$ (see Proposition~{\ref{prop:norm:dilations}}), we obtain the estimate
\begin{align*}
|g(0)|^2|a_m|\|(g_0)_r^{m}\|_{\alpha,p}
&=\|L_{g_r}g_r-g(0)^2Q((g_0)_r)\|_{\alpha,p}\\
&\lesssim \sum_{j=0}^{m-1}\|(g_0)_r^j\|_{\alpha,p} 
\lesssim \sum_{j=0}^{m-1}\|(g_0)_r^{m}\|^{j/m}_{\alpha,p}.
\end{align*}
Therefore Remark~{\ref{rem:real-variable:boundedness}}
implies that $\sup_{0<r<1}\|(g_0)_r^{m}\|_{\alpha,p}<\infty$, and hence Fatou's lemma shows that $g_0^m\in A_{\alpha}^p$, which means that $g^m\in A^p_{\alpha}$. 
\end{proof}

Prior to proving  Theorem \ref{thm:algebraimpliesginBloch:introduction}
\ref{item:thm:algebraimpliesginBloch:introduction:1} some definitions and results about the theory of iterated commutators are needed.
Let $A,B:\H(\D)\to\H(\D)$ be two linear operators.
The {\it commutator} of $A$ and $B$ is the linear operator $[A,B]:=AB-BA$. If $C,D:\H(\D)\to\H(\D)$ are linear operators which commute with $B$ then
\begin{equation}\label{eqn:commutator:CAD:B:C:D:commute:B}
[CAD,B]=C[A,B]D.
\end{equation}
\noindent
The {\it iterated commutators} $[A,B]_k$, $k\in\N$, are defined inductively as follows:
\[
[A,B]_1:=[A,B]
\qquad\mbox{and}\qquad
[A,B]_{k+1}:=[[A,B]_k,B],\quad\mbox{for $k\in\N$.}
\]

We will use the following formula
\begin{equation}\label{eqn:formula:iterated:commutator}
[A,B]_k=\sum_{j=0}^k(-1)^j\binom{k}{j}B^jAB^{k-j}
\qquad(k\in\N).
\end{equation}

\begin{proposition} Let $g\in \H(\D)$ and $k\in\N$, then
\begin{equation}\label{eqn:commutator:Sgk:Tg}
[S^k_g,T_g]=T_gT_{g^k}+g(0)^k(g-g(0))\,\delta_0 
\end{equation}
\end{proposition}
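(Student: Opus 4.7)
The plan is to prove \eqref{eqn:commutator:Sgk:Tg} by induction on $k$, using the Leibniz-type commutator identity $[AB,C]=A[B,C]+[A,C]B$. The base case $k=1$ is just a reformulation of \eqref{eqn:TS}: moving $T_gS_g$ to one side and $S_gT_g$ to the other gives
\[
[S_g,T_g]=S_gT_g-T_gS_g=T_g^2+g(0)(g-g(0))\,\delta_0,
\]
which is \eqref{eqn:commutator:Sgk:Tg} for $k=1$ (since $T_{g^1}=T_g$).

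For the inductive step, assume \eqref{eqn:commutator:Sgk:Tg} for $k$ and write
\[
[S_g^{k+1},T_g]=S_g[S_g^k,T_g]+[S_g,T_g]\,S_g^k.
\]
After substituting the induction hypothesis and the base case, two of the $\delta_0$-terms contain $\delta_0 S_g^k$, which vanishes because $S_g^kf(0)=0$ for every $f\in\H(\D)$. For the $T_g$-terms, I would exploit \eqref{eqn:Sk} and \eqref{eqn:MSTdelta} applied to $g^k$ to write $T_{g^k}+S_g^k=M_{g^k}-g(0)^k\delta_0$, so that
\[
T_g^2T_{g^k}+T_g^2S_g^k=T_g^2M_{g^k}-g(0)^kT_g^2\delta_0=\tfrac{1}{k+1}\,T_gT_{g^{k+1}}-g(0)^kT_g^2\delta_0,
\]
where the last equality uses \eqref{eqn:TM} with $j=1$. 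For the remaining piece $S_gT_gT_{g^k}$, I would use \eqref{eqn:TS} to replace $S_gT_g$ by $T_gS_g+T_g^2+g(0)(g-g(0))\delta_0$; again $\delta_0T_{g^k}=0$, and \eqref{eqn:ST} gives $S_gT_{g^k}=\frac{k}{k+1}T_{g^{k+1}}$, so
\[
S_gT_gT_{g^k}=\tfrac{k}{k+1}\,T_gT_{g^{k+1}}+T_g^2T_{g^k}.
\]
Combining these two displays, the coefficients of $T_gT_{g^{k+1}}$ add to $1$, which is exactly the required leading term.

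What remains is to identify the $\delta_0$-part. A direct computation from the definitions shows
\[
S_g\bigl((g-g(0))\delta_0\bigr)=\tfrac12(g^2-g(0)^2)\,\delta_0,\qquad
T_g^2\delta_0=\tfrac12(g-g(0))^2\,\delta_0
\]
(the first because $S_g(g-g(0))=S_gg=\frac12(g^2-g(0)^2)$, the second by iterating $T_g1=g-g(0)$). Collecting everything, the $\delta_0$-coefficient is
\[
\tfrac{g(0)^k}{2}\bigl((g^2-g(0)^2)-(g-g(0))^2\bigr),
\]
and the algebraic collapse $(g^2-g(0)^2)-(g-g(0))^2=(g-g(0))\bigl((g+g(0))-(g-g(0))\bigr)=2g(0)(g-g(0))$ turns this into $g(0)^{k+1}(g-g(0))$, completing the induction.

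The main obstacle is purely the $\delta_0$-bookkeeping: every product in the Leibniz expansion spawns a small $\delta_0$-correction, and one has to verify that the three non-trivial $\delta_0$-contributions (from $S_g[S_g^k,T_g]$, from the $T_g^2$-term, and from the base case applied on the right) fuse into the single clean term $g(0)^{k+1}(g-g(0))\delta_0$. A viable alternative is a direct computation based on integration by parts: using $S_g^k=S_{g^k}$ and Fubini one obtains $T_gS_{g^k}f=g\cdot S_{g^k}f-S_{g^{k+1}}f$, while $T_gT_{g^k}f=g\cdot T_{g^k}f-\frac{k}{k+1}T_{g^{k+1}}f$; two applications of \eqref{eqn:MSTdelta} (to $g^k$ and to $g^{k+1}$) then eliminate the multiplicative terms $g\cdot T_{g^k}f$ and $g\cdot S_{g^k}f$, producing the identity at once. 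I would present the induction proof because it makes transparent which previously established identities are being used.
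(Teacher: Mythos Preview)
Your induction argument is correct. (One tiny slip in exposition: after the Leibniz expansion only \emph{one} term carries $\delta_0 S_g^k$; the other vanishing piece you use is $\delta_0 T_{g^k}$, which appears later when you rewrite $S_gT_gT_{g^k}$. Both vanish for the same reason, so the argument is unaffected.)

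The paper's proof, however, is a single-line direct computation rather than an induction. It observes that \eqref{eqn:ST} and \eqref{eqn:TM} together give $S_g^kT_g=S_{g^k}T_g=\frac{1}{k+1}T_{g^{k+1}}=T_gM_{g^k}$, and then simply expands $M_{g^k}=S_{g^k}+T_{g^k}+g(0)^k\delta_0$ via \eqref{eqn:MSTdelta}:
\[
S_g^kT_g=T_gM_{g^k}=T_gS_g^k+T_gT_{g^k}+g(0)^kT_g\delta_0
=T_gS_g^k+T_gT_{g^k}+g(0)^k(g-g(0))\,\delta_0,
\]
which is the commutator identity at once. The key insight you miss is that $S_{g^k}T_g$ and $T_gM_{g^k}$ are \emph{equal} (both equal $\frac{1}{k+1}T_{g^{k+1}}$), which eliminates the need for any recursion. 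Your inductive route works, but it repackages this single identity into a Leibniz expansion plus several subsidiary computations and a nontrivial $\delta_0$-cancellation; the direct argument buys brevity and makes the structural reason for the formula transparent. Interestingly, your proposed ``alternative'' at the end---using $M_{g^k}$ to eliminate multiplicative terms---is essentially the paper's approach, and is the one I would recommend presenting.
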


\begin{proof}
By~{\eqref{eqn:ST}} and~{\eqref{eqn:TM}}  we have that
$S_g^kT_g=S_{g^k}T_g=T_gM_{g^k}$, so~{\eqref{eqn:MSTdelta}} gives that
\[
S_g^kT_g
= T_gT_{g^k}+T_gS_{g^k}+g(0)^kT_g\delta_0
= T_gT_{g^k}+T_gS_g^k+g(0)^kg_0\,\delta_0,
\]
which is just~{\eqref{eqn:commutator:Sgk:Tg}}.
\end{proof}

\begin{proposition} Let $g\in\H(\D)$ and $k\in\N$, then
\begin{equation}
\label{eqn:commutator:exp-power:g-g(0):delta0:Tg}
[\tfrac{(g-g(0))^k}{k!}\,\delta_0,T_g]_j
=(-1)^j\,\tfrac{(g-g(0))^{k+j}}{(k+j)!}\,\delta_0
\qquad(j,k\in\N).
\end{equation}
\end{proposition}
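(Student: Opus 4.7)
The plan is to prove the identity by induction on $j$, with the base case $j=1$ doing essentially all the work and the inductive step reducing to the base case with $k$ replaced by $k+j$.

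The starting observation is that for any $f\in\H(\D)$, $(T_gf)(0)=0$, which means $\delta_0\circ T_g = 0$. Consequently, writing $A_k := \tfrac{g_0^k}{k!}\delta_0$, the product $A_k T_g$ vanishes identically. Therefore
\[
[A_k,T_g] = A_k T_g - T_g A_k = -T_g A_k.
\]
On the other hand, for $f\in\H(\D)$ we have $A_k f = \tfrac{f(0)}{k!}\,g_0^k$, so
\[
T_g A_k f = \tfrac{f(0)}{k!}\,T_g(g_0^k).
\]
By the identity \eqref{eqn:Tg:on:powers:of:g-g(0)} we have $T_g(g_0^k) = \tfrac{1}{k+1}g_0^{k+1}$, so
\[
T_g A_k f = \tfrac{f(0)}{(k+1)!}\,g_0^{k+1} = A_{k+1} f.
\]
This yields the base case
\[
[A_k, T_g] = -A_{k+1} = (-1)^1\,\tfrac{g_0^{k+1}}{(k+1)!}\,\delta_0,
\]
which is \eqref{eqn:commutator:exp-power:g-g(0):delta0:Tg} for $j=1$.

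For the inductive step, assume the identity holds at level $j$, that is, $[A_k,T_g]_j = (-1)^j A_{k+j}$. Then by the definition of the iterated commutator,
\[
[A_k,T_g]_{j+1} = \bigl[[A_k,T_g]_j,\,T_g\bigr] = (-1)^j\,[A_{k+j},T_g].
\]
Applying the base case computation but with $k$ replaced by $k+j$ gives $[A_{k+j},T_g] = -A_{k+j+1}$, hence
\[
[A_k,T_g]_{j+1} = (-1)^{j+1} A_{k+j+1} = (-1)^{j+1}\,\tfrac{g_0^{k+j+1}}{(k+j+1)!}\,\delta_0,
\]
which closes the induction. I do not anticipate any real obstacle: the whole argument rests on the two elementary facts that $\delta_0 T_g = 0$ and that $T_g$ raises the power of $g_0$ as in \eqref{eqn:Tg:on:powers:of:g-g(0)}, so formula \eqref{eqn:formula:iterated:commutator} is not even needed for this computation.
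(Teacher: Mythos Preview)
Your proof is correct and follows essentially the same approach as the paper: induction on $j$, using $\delta_0 T_g = 0$ together with~\eqref{eqn:Tg:on:powers:of:g-g(0)} to handle the base case, and then reducing the inductive step to the base case with $k$ shifted. The only difference is cosmetic---you introduce the shorthand $A_k$ and spell out explicitly that $A_kT_g=0$, whereas the paper leaves this implicit in writing $[\tfrac{(g_0)^k}{k!}\delta_0,T_g]=-T_g\bigl(\tfrac{(g_0)^k}{k!}\bigr)\delta_0$.
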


\begin{proof}
 Observe that~{\eqref{eqn:commutator:exp-power:g-g(0):delta0:Tg}} follows by induction on $j$ from~{\eqref{eqn:Tg:on:powers:of:g-g(0)}}. Indeed, \eqref{eqn:Tg:on:powers:of:g-g(0)} directly shows~{\eqref{eqn:commutator:exp-power:g-g(0):delta0:Tg}} for $j=1$:
\[
[\tfrac{(g_0)^k}{k!}\,\delta_0,T_g]
=-T_g\bigl(\tfrac{(g_0)^k}{k!}\bigr)\delta_0
=-\tfrac{(g_0)^{k+1}}{(k+1)!}\,\delta_0.
\]
Moreover, if $[\tfrac{(g_0)^k}{k!}\,\delta_0,T_g]_j
=(-1)^j\,\tfrac{(g_0)^{k+j}}{(k+j)!}\,\delta_0$ holds, 
then~{\eqref{eqn:Tg:on:powers:of:g-g(0)}}  gives that
\[
[\tfrac{(g_0)^k}{k!}\,\delta_0,T_g]_{j+1}
=(-1)^{j+1}
  T_g\bigl(\tfrac{(g_0)^{k+j}}{(k+j)!}\bigr)\delta_0
=(-1)^{j+1}\tfrac{(g_0)^{k+j+1}}{(k+j+1)!}\,\delta_0.
\qedhere
\]
\end{proof}

\begin{corollary}\label{co:iteratedcom} Let $g\in \H(\D)$ and $k\in\N$, then
\begin{equation}\label{eqn:iterated:commutators:Sgk:Tg}
[S_g^k,T_g]_j
=\tfrac{k!}{(k-j)!}\,T_g^jS_g^{k-j}T^j_g
 -\tfrac{(-1)^j}{j!}\,g(0)^k(g-g(0))^j\,\delta_0
\quad(1\le j\le k).
\end{equation}
Moreover, 
\begin{equation}\label{eqn:iterated:commutators:Sgk:Tg:0}
[S_g^k,T_g]_j=-\tfrac{(-1)^j}{j!}\,g(0)^k(g-g(0))^j\,\delta_0\qquad(j>k).
\end{equation}
\end{corollary}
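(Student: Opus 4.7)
My plan is to prove~\eqref{eqn:iterated:commutators:Sgk:Tg} by induction on $j\in\{1,\dots,k\}$, and then extend to $j>k$ by a simple continuation that uses only the second summand. The key observation throughout is that the action of $T_g^j$ ($j\ge1$) on any $h\in\H(\D)$ vanishes at the origin, so whenever a ``$\delta_0$-term'' is sandwiched on the right by $T_g^j$, it kills it.

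\textbf{Base case $j=1$.} First rewrite~\eqref{eqn:commutator:Sgk:Tg} using the identities of Section~\ref{sec:algebra:of:g-operators}: by~\eqref{eqn:Sk} we have $S_g^{k-1}=S_{g^{k-1}}$, and then~\eqref{eqn:ST} (with $j=k-1$, exponent $1$ on $T_g$) yields $T_{g^k}=kS_g^{k-1}T_g$. Substituting into~\eqref{eqn:commutator:Sgk:Tg} gives exactly
\[
[S_g^k,T_g]=kT_gS_g^{k-1}T_g+g(0)^k(g-g(0))\delta_0,
\]
which is~\eqref{eqn:iterated:commutators:Sgk:Tg} at $j=1$ since $\tfrac{k!}{(k-1)!}=k$ and $-(-1)/1!=1$.

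\textbf{Inductive step for $1\le j<k$.} Assume~\eqref{eqn:iterated:commutators:Sgk:Tg} holds at a given $j$ and apply $[\cdot,T_g]$ to both sides:
\[
[S_g^k,T_g]_{j+1}
=\tfrac{k!}{(k-j)!}\,[T_g^jS_g^{k-j}T_g^j,T_g]
-\tfrac{(-1)^j}{j!}\,g(0)^k\,[(g-g(0))^j\delta_0,T_g].
\]
Since $T_g$ commutes with $T_g^j$, identity~\eqref{eqn:commutator:CAD:B:C:D:commute:B} gives $[T_g^jS_g^{k-j}T_g^j,T_g]=T_g^j[S_g^{k-j},T_g]T_g^j$. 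Applying the already-proved base case with $k$ replaced by $k-j$, we get
\[
[S_g^{k-j},T_g]=(k-j)T_gS_g^{k-j-1}T_g+g(0)^{k-j}(g-g(0))\delta_0.
\]
Sandwiching with $T_g^j$ on both sides and acting on an arbitrary $f$, the second summand produces a factor $\delta_0(T_g^jf)=(T_g^jf)(0)=0$; so only the first survives, yielding
\[
[T_g^jS_g^{k-j}T_g^j,T_g]=(k-j)T_g^{j+1}S_g^{k-j-1}T_g^{j+1}.
\]
For the other commutator, \eqref{eqn:commutator:exp-power:g-g(0):delta0:Tg} with $k$ replaced by $j$ and iteration index $1$ gives
\[
[(g-g(0))^j\delta_0,T_g]=-\tfrac1{j+1}(g-g(0))^{j+1}\delta_0.
\]
Plugging both in and simplifying factorials and signs yields precisely
\[
[S_g^k,T_g]_{j+1}
=\tfrac{k!}{(k-j-1)!}T_g^{j+1}S_g^{k-(j+1)}T_g^{j+1}
-\tfrac{(-1)^{j+1}}{(j+1)!}g(0)^k(g-g(0))^{j+1}\delta_0,
\]
completing the induction and thus establishing~\eqref{eqn:iterated:commutators:Sgk:Tg}.

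\textbf{The case $j>k$.} Evaluating~\eqref{eqn:iterated:commutators:Sgk:Tg} at $j=k$ gives
\[
[S_g^k,T_g]_k=k!\,T_g^{2k}-\tfrac{(-1)^k}{k!}\,g(0)^k(g-g(0))^k\delta_0.
\]
Taking one more commutator with $T_g$, the first summand vanishes because $T_g^{2k}$ commutes with $T_g$, while the second is handled by~\eqref{eqn:commutator:exp-power:g-g(0):delta0:Tg} exactly as above. Iterating this observation, every further bracket with $T_g$ only advances the power of $(g-g(0))$ by one and adjusts the prefactor according to~\eqref{eqn:commutator:exp-power:g-g(0):delta0:Tg}, producing~\eqref{eqn:iterated:commutators:Sgk:Tg:0}.

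The only delicate point is the vanishing of the $\delta_0$-contribution in $T_g^j[S_g^{k-j},T_g]T_g^j$; everything else is bookkeeping of binomial/factorial coefficients and signs, driven by the already-established identities~\eqref{eqn:commutator:Sgk:Tg} and~\eqref{eqn:commutator:exp-power:g-g(0):delta0:Tg}.
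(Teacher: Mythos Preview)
Your proof is correct and follows essentially the same route as the paper's: induction on $j$ using~\eqref{eqn:commutator:CAD:B:C:D:commute:B} to pull the commutator inside $T_g^j(\,\cdot\,)T_g^j$, the base case~\eqref{eqn:commutator:Sgk:Tg} (rewritten via $T_{g^{k-j}}=(k-j)S_g^{k-j-1}T_g$), the vanishing of $\delta_0T_g^j$, and~\eqref{eqn:commutator:exp-power:g-g(0):delta0:Tg} for the $\delta_0$-part; the extension to $j>k$ via $[T_g^{2k},T_g]=0$ is also identical. The only cosmetic difference is that you spell out $T_{g^k}=kS_g^{k-1}T_g$ explicitly in the base case, whereas the paper leaves this implicit.
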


\begin{proof}
We prove~{\eqref{eqn:iterated:commutators:Sgk:Tg}} by induction on $j$. 
For $j=1$, \eqref{eqn:iterated:commutators:Sgk:Tg} is just~{\eqref{eqn:commutator:Sgk:Tg}}. Now fix $1\le j<k$ and assume that
\[
[S_g^k,T_g]_j
=\tfrac{k!}{(k-j)!}\,T_g^jS_g^{k-j}T^j_g
 -\tfrac{(-1)^j}{j!}\,g(0)^k(g_0)^j\,\delta_0.
\] 
Then~{\eqref{eqn:commutator:CAD:B:C:D:commute:B}} and~{\eqref{eqn:Tg:on:powers:of:g-g(0)}} show that
\[
[S_g^k,T_g]_{j+1}
=\tfrac{k!}{(k-j)!}\,T_g^j[S_g^{k-j},T_g]T^j_g
  -\tfrac{(-1)^{j+1}}{(j+1)!}\,g(0)^k(g_0)^{j+1}\,\delta_0.
\]
Therefore~{\eqref{eqn:commutator:Sgk:Tg}} implies that
\[
[S_g^k,T_g]_{j+1}
=\tfrac{k!}{(k-j-1)!}\,T_g^{j+1}S_g^{k-j-1}T^{j+1}_g
  -\tfrac{(-1)^{j+1}}{(j+1)!}\,g(0)^k(g_0)^{j+1}\,\delta_0.
\]
Thus~{\eqref{eqn:iterated:commutators:Sgk:Tg}} is proved.
In particular, 
\[
[S_g^k,T_g]_k
=k!\,T_g^{2k}-\tfrac{(-1)^k}{k!}\,g(0)^k(g_0)^k\,\delta_0,
\]
and so, for $j>k$, \eqref{eqn:commutator:exp-power:g-g(0):delta0:Tg} implies that
\[
[S_g^k,T_g]_j
=-(-1)^kg(0)^k[\tfrac{(g_0)^k}{k!}\,\delta_0,T_g]_{j-k}
=-\tfrac{(-1)^j}{j!}g(0)^k(g_0)^j\,\delta_0.
\qedhere
\]
\end{proof}

\begin{proof}[\bf Proof of Theorem \ref{thm:algebraimpliesginBloch:introduction}
\ref{item:thm:algebraimpliesginBloch:introduction:1}]
First of all, we observe that if $L$ is not trivial and $L\in\BB(A^p_{\alpha})$, then $g^k\in A^p_\alpha$, for any $k\in\N$.
Indeed, Propositions~{\ref{prop:ST-decomposition}} and~{\ref{prop:algebraLgk}} show that there is an strictly increasing sequence $\{k_j\}$  in $\N$ such that $L(g_0^{k_j})=P_j(g_0)$, where $P_j$ is a polynomial of degree $d_j>k_j$. Then arguing as in the proof of part \ref{item:thm:algebraimpliesginBloch:introduction:2} we obtain that $g^{d_j}\in A^p_{\alpha}$, and consequently, $g^k\in A^p_{\alpha}$, for every $k\in\N$.

Now we prove part \ref{item:thm:algebraimpliesginBloch:introduction:1}.
Taking into account ~{Proposition~\ref{prop:ST-decomposition}}, Lemma~{\ref{lem:polynomial:Tg:bounded}}, and the above observation, we may assume that
\[
L_g=P_0(T_g)+\sum_{k=1}^nS_g^kP_k(T_g),
\] 
where $P_0,\dots,P_n$ are polynomials, and $P_n$ has degree $m\ge1$.

On the other hand, since $P_k(T_{g_r})$ commute  with $T_{g_r}$, \eqref{eqn:commutator:CAD:B:C:D:commute:B}, \eqref{eqn:iterated:commutators:Sgk:Tg} and~{\eqref{eqn:iterated:commutators:Sgk:Tg:0}}
give that
\begin{align*}
[L_{g_r},T_{g_r}]_n
&= n!\,T_{g_r}^{2n}P_n(T_{g_r})
                        +g(0)Q_0(g_r-g(0))\,\delta_0\\
&= Q_n(T_{g_r})+g(0)Q_0(g_r-g(0))\,\delta_0,
\end{align*}
where $Q_n$ and $Q_0$ are polynomials and $Q_n$ has degree $N=2n+m>n$.  
Now {\eqref{eqn:formula:iterated:commutator}} and  Proposition~{\ref{prop:norm:dilations}} imply that 
\[
\|[L_{g_r},T_{g_r}]_n\|_{\alpha,p}
\le c_{n,p}\|L_{g_r}\|_{\alpha,p}\|T_{g_r}\|_{\alpha,p}^n
\le c_{n,p}\|L_g\|_{\alpha,p}\|T_{g_r}\|_{\alpha,p}^n.
\]
Moreover, 
$\|Q_0(g_r-g(0))\,\delta_0\|_{\alpha,p}
\le \|Q_0(g_0)\,\delta_0\|_{\alpha,p}=C<\infty$, by Theorem~{\ref{thm:algebraimpliesginBloch:introduction}
\ref{item:thm:algebraimpliesginBloch:introduction:2}}
 and Proposition \ref{prop:norm:dilations}. 
On the other hand,  if $Q_n(z)=\sum_{k=0}^Na_kz^k$, then, taking into account Proposition~{\ref{prop:T_g-powers}}, we have 
\[
c_N|a_N|\|T_{g_r}\|^N_{\alpha,p}
-c_{N,p}'\sum_{k=0}^{N-1}|a_k|\|T_{g_r}\|_{\alpha,p}^k
\le \|Q_n(T_{g_r})\|_{\alpha,p}.
\]
Therefore, putting all that together, we get that $\varphi(\|T_{g_r}\|_{\alpha,p})\le C$, where
\[
\varphi(x)=c_N|a_N|x^N
-c_{N,p}'\sum_{k=0}^{N-1}|a_k| x^k
-c_{n,p}\|L_g\|_{\alpha,p}\,x^n.
\]
Hence Remark~{\ref{rem:real-variable:boundedness}} 
and  Proposition~{\ref{prop:norm:dilations}} conclude the proof.
\end{proof}

\subsection{Proof of Theorem~{\ref{thm:main:introduction}}}
In order to give the proof, we need the following  well known characterization of compact operators.

\begin{lemma}[{\cite[Lemma 3.7]{Tjani}}]\label{lem:compactness}
Let $X$ and $Y$ be two Banach (or quasi-Banach) spaces of
analytic functions on $\D$, and let $T:X\to Y$ be a linear operator. Suppose that the
following conditions are satisfied:
\begin{enumerate}[label={\sf(\alph*)},ref={\sf(\alph*)},topsep=6pt, leftmargin=32pt,itemsep=3pt] 
\item \label{item:lem:compactness:1}
The point evaluation functionals on $Y$ are bounded.
\item \label{item:lem:compactness:2}
The closed unit ball of $X$ is a compact subset of $\H(\D)$, where $\H(\D)$ is endowed with the topology of uniform convergence on compacta.
\item \label{item:lem:compactness:3} 
$T:X\to Y$ is continuous, where both $X$ and $Y$ are endowed with the topology of uniform convergence on compacta.
\end{enumerate}
Then $T:X\to Y$ is a compact operator if and only if for 
any bounded sequence $\{f_j\}$ in $X$ such that $f_j\to 0$ uniformly on compacta, the
sequence $\{Tf_j\}$ converges to zero in the norm of $Y$.
\end{lemma}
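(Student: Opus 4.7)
The proof is a standard two-direction argument driven by the compatibility of the norm topology on $Y$ with the topology of uniform convergence on compacta (via hypothesis~\ref{item:lem:compactness:1}), and by the precompactness of bounded subsets of $X$ in $\H(\D)$ (via hypothesis~\ref{item:lem:compactness:2}).

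For the forward implication, assume $T$ is compact and let $\{f_j\}\subset X$ be a bounded sequence with $f_j\to 0$ uniformly on compacta. Given any subsequence, compactness of $T$ extracts a further subsequence $\{Tf_{j_k}\}$ converging in the norm of $Y$ to some $h\in Y$; by~\ref{item:lem:compactness:1} this yields $Tf_{j_k}(z)\to h(z)$ for every $z\in\D$. On the other hand, \ref{item:lem:compactness:3} applied to $f_{j_k}\to 0$ in $\H(\D)$ gives $Tf_{j_k}\to 0$ in $\H(\D)$, and in particular pointwise on $\D$. Hence $h\equiv 0$, and since every subsequence of $\{Tf_j\}$ admits a further subsequence with $\|Tf_{j_k}\|_Y\to 0$, the whole sequence satisfies $\|Tf_j\|_Y\to 0$.

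For the reverse implication, let $\{f_j\}$ be a sequence in the closed unit ball $B_X$ of $X$. By~\ref{item:lem:compactness:2}, $B_X$ is compact in $\H(\D)$, so some subsequence $f_{j_k}$ converges in $\H(\D)$ to a limit $f$ which, by compactness (hence closedness) of $B_X$ in $\H(\D)$, lies in $B_X\subset X$. Then $\{f_{j_k}-f\}$ is bounded in $X$ and tends to $0$ in $\H(\D)$, so the sequential hypothesis gives $\|Tf_{j_k}-Tf\|_Y\to 0$. This exhibits a norm-convergent subsequence of $\{Tf_j\}$ and establishes compactness of $T$.

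The only delicate point is the identification $h\equiv 0$ in the forward direction: one must reconcile two a priori different topologies on $Y$ (norm convergence and uniform convergence on compacta) by passing through their common coarsening to pointwise evaluation, which is exactly what~\ref{item:lem:compactness:1} affords. Everything else reduces to the standard \emph{every subsequence has a subsequence} trick together with the precompactness of the unit ball of $X$ in $\H(\D)$; in particular, no quantitative estimate is needed, so the argument transfers verbatim to the quasi-Banach setting.
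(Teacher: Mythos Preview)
Your proof is correct and is the standard argument. Note, however, that the paper does not supply its own proof of this lemma: it is quoted from \cite[Lemma~3.7]{Tjani} and used as a black box, so there is no in-paper proof to compare against.
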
 

 It is worth mentioning that conditions~{\ref{item:lem:compactness:1}} and~{\ref{item:lem:compactness:2}} of the previous lemma hold when $X=Y=A^p_\alpha$, and in such a case any $g$-operator satisfies~{\ref{item:lem:compactness:3}}.

\begin{proof}[\bf Proof of Theorem~{\ref{thm:main:introduction}}~{\ref{item:thm:main:introduction:1}}] 
If $g\in H^\infty$, then $S_g,T_g\in\BB(A^p_{\alpha})$, by Theorem~{\ref{thm:TSM-bounded}~{\ref{item:thm:TSM-bounded:1})}} and Proposition~{\ref{prop:SM-bounded-compact}~{\ref{item:prop:SM-bounded-compact:1}}}, and so $L_g\in\BB(A^p_{\alpha})$. Conversely, assume that  $L_g\in \BB(A^p_{\alpha})$ and apply Proposition~{\ref{prop:norm:dilations}} to conclude that for $r\in (0,1)$, we have  $L_{g_r}\in \BB(A^p_{\alpha})$ with $\|L_{g_r}\|_{\al,p}\le  \|L_{g}\|_{\al,p}$. From 
\[
L_{g_r}=\sum_{k=0}^nS_{g_r}^kT_{g_r}P_k(T_{g_r})+S_{g_r}P_{n+1}(S_{g_r})+g_r(0)P_{n+2}(g_r-g_r(0))\,\delta_0,
\]
we see that for fixed $r\in (0,1)$, all operators on the right are compact, except 
\[
S_{g_r}P_{n+1}(S_{g_r}) = S_{g_rP_{n+1}(g_r)}= M_{g_rP_{n+1}(g_r)}-T_{g_rP_{n+1}(g_r)}-g_rP_{n+1}(g_r)(0)\delta_0.
\]
By Theorem~{\ref{thm:TSM-bounded}~\ref{item:thm:TSM-bounded:2}} we conclude that
\[
L_{g_r}=M_{g_rP_{n+1}(g_r)}+K,
\]
where $K\in\KK(A^p_{\alpha})$ is compact.

Now, for any $\lambda\in\D$, we consider the functions
\[
h_\lambda(z)=\frac{(1-|\lambda|^2)^{\frac{\al+2}{p}}}{(1-\overline{\lambda}z)^{\frac{2\al+4}{p}}}\qquad(z\in \D).
\]
Since $(1-\overline{\lambda}z)^{-\alpha-2}$, $\lambda, z\in\D$, is the Bergman kernel for $A^2_{\alpha}$, 
$\|h_{\lambda}\|_{\alpha,p}=1$, for any $\lambda\in\D$.
(Note that for $\alpha=-1$ the corresponding Bergman kernel is the classical Cauchy kernel.)
 Moreover, it is clear that, for any $\zeta\in\T$,
 $h_{\lambda}\to0$, as $\lambda\to\zeta$, uniformly on compacta. 
So, by Lemma~{\ref{lem:compactness}}, $\|Kh_{\lambda}\|_{\alpha,p}\to 0$.
On the other hand, note that if $G_r=g_rP_{n+1}(g_r)$ then
\[
\|M_{G_r} h_\lambda\|_{\alpha,p}^p
=(\alpha+1)\int_{\D}B_{\alpha}(z,\lambda)\,|G_r(z)|^p(1-|z|^2)^{\alpha}\,\,dA(z)
\quad(\lambda\in\D,\,\alpha>-1),
\]
and 
\[
\|M_{G_r} h_\lambda\|_{-1,p}^p
=\int_{\T}P(\zeta,\lambda)\,|G_r(\zeta)|^p\,\frac{|d\zeta|}{2\pi}
\quad(\lambda\in\D),
\]
where ${\displaystyle B_{\alpha}(z,\lambda)=\frac{(1-|\lambda|^2)^{\alpha+2}}{|1-\overline{\lambda}z|^{2\alpha+4}}}$
and ${\displaystyle P(\zeta,\lambda)=\frac{1-|\lambda|^2}{|1-\overline{\lambda}\zeta|^2}}$
are the Poisson-Bergman (or Berezin) kernel and the classical Poisson kernel, respectively. Thus, since $|G_r|=|g_rP_{n+1}(g_r)|\in C(\overline{\D})$, we have that (see, for instance, \cite[Prop.\ 8.2.7]{Krantz}) 
\[
\lim_{\lambda\to\zeta}\| M_{g_rP_{n+1}(g_r)} h_\lambda\|_{\al,p}^p=|g_rP_{n+1}(g_r)(\zeta)|^p\qquad (\zeta\in \T).
\]
 Hence  
\[
|g_rP_{n+1}(g_r)(\zeta)|^p=\lim_{\lambda\to\zeta}\|L_{g_r}h_\lambda\|^p_{\al,p}\le \|L_g\|^p_{\al,p}\lim_{\lambda\to\zeta}\| h_\lambda\|^p_{\al,p}= \|L_g\|^p_{\al,p},
\]
 for all $\zeta\in \T$ and $0<r<1$, which implies that $gP_{n+1}(g)\in H^\infty$, and so $g\in H^{\infty}$, by Lemma~{\ref{lem:P(g):bounded}}. Thus the proof is finished.
\end{proof}

\begin{proof}[\bf Proof of Theorem~{\ref{thm:main:introduction}}~{\ref{item:thm:main:introduction:2}}] 
First of all observe that if $L\in\BB(A_\al^p)$ then Theorem~{\ref{thm:algebraimpliesginBloch:introduction}
\ref{item:thm:algebraimpliesginBloch:introduction:1}} gives that
$g^k\in A^p_{\alpha}$ for any $k\in\mathbb{N}$, so
$g(0)P_{n+1}(g_0)\,\delta_0\in\BB(A_\al^p)$ and therefore
\[
\sum_{k=0}^nS_g^kT_gP_k(T_g)=L-g(0)P_{n+1}(g_0)\,\delta_0\in\BB(A_\al^p).
\]
Moreover, if either $g^{n+1}\in\B$, if $\alpha>-1$, or $g^{n+1}\in BMOA$, if $\alpha=-1$, then Proposition~{\ref{prop:Bn:nested}} shows that $g\in A^p_{\alpha}$, and we deduce that
$g(0)P_{n+1}(g_0)\,\delta_0\in\BB(A_\al^p)$.
 
Thus, without loss generality, from now on we assume that
\[
L=\sum_{k=0}^nS_g^kT_gP_k(T_g).
\]
 If  $g^n\in\B$ when $\al>-1$, or $g^n\in BMOA$ when $\al=-1$, then, by Proposition~{\ref{prop:Bn:nested}}, the same holds for $g^k$, $1\le k\le n$, and all the operators involved in the definition of $L$ are bounded, hence so is $L$. 

Conversely, if $L=L_g$ is bounded, then by Theorem~{\ref{thm:algebraimpliesginBloch:introduction} \ref{item:thm:algebraimpliesginBloch:introduction:1}}
we have that $T_g$ is bounded. Now, by applying  Proposition~{\ref{prop:norm:dilations}},   Proposition~{\ref{prop:Bn:nested}} and Theorem~{\ref{thm:TSM-bounded}}, for every $0<r<1$, 
we obtain
\begin{align*}
\|T_{g^{n+1}_r}\|
&\le c\left (\|L_{g_r}\|+ \sum_{k=1}^{n-1}\|P_k(T_{g_r})\|\|T_{g_r^{k+1}}\| \right)\\
& \le c'(g,n,p)\left( \|L_g\|+ \sum_{k=1}^{n-1}\|T_{g_r^{n+1}}\|^{\frac{k+1}{n+1}}\right),
\end{align*}
where $0<c'=c'(g,n,p)<\infty$ because $T_g$ is bounded.
Then Remark~{\ref{rem:real-variable:boundedness}} 
and Proposition~{\ref{prop:norm:dilations}} complete the proof.
\end{proof}

We will use  Proposition \ref{prop:Bn:nested} and the following result in the proof of 
Theorem~{\ref{thm:main:introduction}}~{\ref{item:thm:main:introduction:3}}.

\begin{lemma}\label{lem:Tg-pointwise-est}
Let $g\in\B$, and, for any $\lambda\in\D\setminus\{0\}$ and $\gamma>0$, let  
\[
f_{\gamma,\lambda}(z)=\frac{z}{(1-\overline{\lambda}z)^\gamma}
\qquad(z\in\D).
\]
Then:
\begin{enumerate}[label={\sf\alph*)},ref={\sf\alph*)},topsep=3pt, leftmargin=*,itemsep=3pt] 
\item \label{item:lem:Tg-pointwise-est:1}
For any $k\in\N$ and $t\in[0,1]$, we have that
\[
|T_g^kf_{\gamma,\lambda}(t\lambda)|
\le \frac{\|g\|^k_\B}
    {|\lambda|^{k}\gamma^k(1-t|\lambda|^2)^\gamma}.
\] 
\item \label{item:lem:Tg-pointwise-est:2}
If $a_0,\dots,a_n\in\C$ and $\ga|\la|>\|g\|_\B$, then   \[
\left|\sum_{k=0}^n a_k T_g^kf_{\gamma,\lambda}(\lambda)\right|\le |a_0|\frac{|\la|}{(1-|\la|^2)^\gamma} +\left(\sum_{k=1}^n|a_k|\right)\frac{\|g\|_\B}{|\la|\gamma(1-|\la|^2)^\gamma}.
\]
\end{enumerate}
\end{lemma}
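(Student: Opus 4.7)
The plan is to prove (a) by induction on $k$ after reducing to a one-variable integral recursion along the radial segment from $0$ to $\lambda$, and then to deduce (b) directly by summing the resulting pointwise bounds and exploiting the hypothesis $\gamma|\lambda|>\|g\|_\B$.

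For (a), I would set $h_k(s):=T_g^kf_{\gamma,\lambda}(\lambda s)$ for $s\in[0,1]$. Since $T_g^{k-1}f_{\gamma,\lambda}$ is analytic on $\D$, the defining integral of $T_g$ is path independent, and parameterizing $\zeta=\lambda\sigma$ along the straight segment from $0$ to $\lambda s$ yields the recursion
\[
h_k(s)=\lambda\int_0^s h_{k-1}(\sigma)\,g'(\lambda\sigma)\,d\sigma\qquad(k\ge1),
\]
with base case $h_0(\sigma)=\lambda\sigma/(1-\sigma|\lambda|^2)^\gamma$, for which the crude bound $|h_0(\sigma)|\le(1-\sigma|\lambda|^2)^{-\gamma}$ holds because $|\lambda|\sigma\le1$. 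Inserting the pointwise Bloch estimate $|g'(\lambda\sigma)|\le\|g\|_\B/(1-\sigma^2|\lambda|^2)$, the elementary inequality $1-\sigma^2|\lambda|^2\ge 1-\sigma|\lambda|^2$ (valid since $\sigma^2\le\sigma$ on $[0,1]$), and the estimate
\[
\int_0^s\frac{d\sigma}{(1-\sigma|\lambda|^2)^{\gamma+1}}\le\frac{1}{\gamma|\lambda|^2(1-s|\lambda|^2)^\gamma},
\]
the induction step produces an extra factor of $\|g\|_\B/(|\lambda|\gamma)$: the $|\lambda|$ in front of the recursion combined with the $|\lambda|^{-2}$ from the integral, together with the inductive $|\lambda|^{-(k-1)}\gamma^{-(k-1)}$, reproduces exactly $|\lambda|^{-k}\gamma^{-k}$. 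Setting $s=t$ finishes (a).

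For (b), I would apply (a) with $t=1$ to bound each $|T_g^kf_{\gamma,\lambda}(\lambda)|$ with $k\ge1$, handle $k=0$ directly via $|f_{\gamma,\lambda}(\lambda)|=|\lambda|/(1-|\lambda|^2)^\gamma$, and factor the common $(1-|\lambda|^2)^{-\gamma}$ out of every summand. The residual tail is
\[
\sum_{k=1}^n|a_k|\left(\frac{\|g\|_\B}{|\lambda|\gamma}\right)^{\!k},
\]
and the hypothesis $\gamma|\lambda|>\|g\|_\B$ forces the base of the geometric power to be strictly less than one. Hence $(\|g\|_\B/(|\lambda|\gamma))^k\le\|g\|_\B/(|\lambda|\gamma)$ for every $k\ge1$, which immediately yields the claimed inequality.

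There is essentially no obstacle in this argument; the only delicate moment is reconciling the $(1-\sigma^2|\lambda|^2)^{-1}$ produced by the Bloch estimate at the point $\lambda\sigma$ with the $(1-\sigma|\lambda|^2)^{-\gamma}$ that propagates through the induction. The monotonicity $\sigma^2\le\sigma$ on $[0,1]$ accomplishes this in one line and is what couples the radial geometry of the segment $[0,\lambda]$ to the Bloch pointwise bound.
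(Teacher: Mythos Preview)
Your proof is correct and follows essentially the same route as the paper: induction on $k$ along the radial segment for part~\ref{item:lem:Tg-pointwise-est:1}, using the Bloch pointwise bound together with $1-\sigma^2|\lambda|^2\ge 1-\sigma|\lambda|^2$ and the elementary integral estimate, then a direct summation with the geometric bound $(\|g\|_\B/(|\lambda|\gamma))^k\le\|g\|_\B/(|\lambda|\gamma)$ for part~\ref{item:lem:Tg-pointwise-est:2}. The only cosmetic difference is that the paper parameterizes the segment $[0,t\lambda]$ as $\zeta=st\lambda$ with $s\in[0,1]$, whereas you write the equivalent recursion $h_k(s)=\lambda\int_0^s h_{k-1}(\sigma)g'(\lambda\sigma)\,d\sigma$ directly.
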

\begin{proof} First we prove~{\ref{item:lem:Tg-pointwise-est:1}} by induction on $k$. If $k=1$ and $s\in [0,1]$, we use the estimates
\begin{gather*}
|f_{\gamma,\lambda}(s\lambda)|=\frac{s|\lambda|}{(1-s|\lambda|^2)^\gamma}\le \frac1{(1-s|\la|^2)^\gamma} \\
|g'(s\lambda)|\le \frac{\|g\|_\B}{1-s^2|\lambda|^2}\le 
 \frac{\|g\|_\B}{1-s|\la|^2}
 \end{gather*}
to conclude that 
\begin{align*}
|T_gf_{\gamma,\lambda}(t\lambda)|
&\le t|\lambda|\int_0^1|f_{\gamma,\lambda}(st\lambda)||g'(st\lambda)|ds\\
&\le \|g\|_\B \int_0^1\frac{t|\lambda|\,ds}{(1-st|\lambda|^2)^{\gamma+1}}
\le \frac{\|g\|_{\B}}{|\lambda|\gamma(1-t|\lambda|^2)^\gamma}.
\end{align*}
If the statement holds for some $k\ge 1$ and all $t\in [0,1]$, then, as above, 
\begin{align*}
|T^{k+1}_gf_{\gamma,\lambda}(t\la)|
&\le
 t|\la|\int_0^1|T_g^kf_{\gamma,\lambda}(st\lambda)||g'(st\lambda)|ds\\
&\le\frac{\|g\|^{k+1}_\B}{|\lambda|^k\gamma^k} \int_0^1\frac{t|\lambda|\,ds}{(1-st|\lambda|^2)^{\gamma+1}}\le 
\frac{\|g\|^{k+1}_{\B}}{|\lambda|^{k+1}\gamma^{k+1} (1-t|\la|^2)^\gamma},
\end{align*}
and the result follows.
Finally, {\ref{item:lem:Tg-pointwise-est:2}}~is a straightforward application of~{\ref{item:lem:Tg-pointwise-est:1}}.
\end{proof}

\begin{proof}[\bf Proof of Theorem~{\ref{thm:main:introduction}}~{\ref{item:thm:main:introduction:3}}]  
If $g^{n+1}\in\B $, then $g^{k}\in\B$, for $1\le k\le n+1$,
by Proposition~{\ref{prop:Bn:nested}}, and  the boundedness of $L_g$ follows from the identity $S_g^kT_g=\frac1{k+1}T_{g^{k+1}}$ and Theorem~{\ref{thm:TSM-bounded}~\ref{item:thm:TSM-bounded:1}}.

 Conversely, assume without loss of generality that $P_n(0)=n+1$. If $L_g\in\BB(A^p_{\alpha})$ then  $g\in \B$,  by Theorem~{\ref{thm:algebraimpliesginBloch:introduction}~\ref{item:thm:algebraimpliesginBloch:introduction:1}}. Moreover, Proposition~{\ref{prop:norm:dilations}} shows that  $L_{g_r}\in \BB(A^p_{\alpha})$ and $\|L_{g_r}\|_{\al,p}\le  \|L_{g}\|_{\al,p}$, for any $r\in (0,1)$. Now we write $P_n(z)=(n+1)(1+zQ_n(z))$, and using again the identity  $S_g^kT_g=\frac1{k+1}T_{g^{k+1}}$ we obtain that
\[
L_{g_r}=T_{g_r^{n+1}}+T_{g_r^{n+1}}T_{g_r}Q_n(T_{g_r}) +\sum_{k=0}^{n-1}\tfrac1{k+1}T_{g_r^{k+1}}P_{k}(T_{g_r})+ g_r(0)Q(g_r-g_r(0))\,\delta_0.
\]
For $\gamma>\frac{\alpha+2}p$ and 
$\lambda\in\D\setminus\{0\}$, we apply $L_{g_r}$ to the 
function $f_{\gamma,\lambda}$ from
Lemma~\ref{lem:Tg-pointwise-est}. 
Since $\delta_0(f_{\gamma,\lambda})=0$, we obtain that 
\[
L_{g_r}f_{\gamma,\lambda}
= T_{g_r^{n+1}}f_{\gamma,\lambda}
 +T_{g_r^{n+1}}T_{g_r}Q_n(T_{g_r})f_{\gamma,\lambda} +\sum_{k=0}^{n-1}\tfrac1{k+1}T_{g_r^{k+1}}P_{k}(T_{g_r})
   f_{\gamma,\lambda}.
\]
Now we use the standard estimates 
\[
|h'(\la)|\le\frac{ c_{\alpha}\|h\|_{\alpha,p}}{(1-|\lambda|^2)^{\frac{\alpha+2}{p}+1}},\quad \|f_{\gamma,\lambda}\|_{\alpha,p}\le \frac{c_{\alpha,\gamma}}{(1-|\la|^2)^{\gamma-\frac{\alpha+2}{p}}},
\]  
where $c_{\alpha}>0$ and $c_{\alpha,\gamma}>0$ are constants which depend only on $\alpha$, and $\alpha$ and $\gamma$, respectively,
and Proposition~\ref{prop:norm:dilations}
  to infer that
\[
|(L_{g_r}f_{\ga,\la})'(\la)|
\le \frac{c_{\al}c_{\alpha,\gamma}\|L_g\|_{\al,p}}{(1-|\la|^2)^{\ga+1}}.
\]
Since  
\begin{align*}
(L_{g_r}f_{\ga,\la})'(\la)
&= (g_r^{n+1})'(\la)\,f_{\gamma,\alpha}(\lambda)
    +(g_r^{n+1})'(\la)\,[T_{g_r}Q_n(T_{g_r})
                               f_{\gamma,\alpha}](\lambda)\\
&\quad+\sum_{k=0}^{n-1}\frac{(g_r^{k+1})'(\la)}{k+1}\,
  [P_{k}(T_{g_r})f_{\gamma,\lambda}](\lambda), 
\end{align*}
 by the triangle inequality we have
\begin{align}\label{eqn:basic:Bloch:estimate:0} 
\frac{|\la||(g_r^{n+1})'(\la)|}{(1-|\la|^2)^\ga}
&\le |(g_r^{n+1})'(\la)||[T_{g_r}Q_n(T_{g_r})f_{\gamma,\lambda}](\la)|+\frac{c_{\al}c_{\alpha,\gamma}\|L_g\|_{\al,p}}{(1-|\la|^2)^{\ga+1}}\\
&\nonumber\,\,\, +
\sum_{k=0}^{n-1}\frac{|(g_r^{k+1})'(\la)|}{k+1}|[P_{k}(T_{g_r})f_{\gamma,\lambda}](\la)|.
\end{align}
We want to estimate the terms on the right with the help of Lemma~{\ref{lem:Tg-pointwise-est}~\ref{item:lem:Tg-pointwise-est:2}}. To this end, note  that  $n$, $Q_n$, and $P_k$, for $0\le k\le n-1$,  depend only on $L_g$, so there exists a constant $c=c(L_g)>0$ depending only on  $L_g$ such that, for $\gamma|\lambda|>\|g\|_\B$, we have that  
\[
|[T_{g_r}Q_n(T_{g_r})f_{\gamma,\lambda}](\la)|\le c\,\frac{\|g\|_\B}{|\lambda|\gamma(1-|\la|^2)^\gamma},
\]
and 
\[
|[P_{k}(T_{g_r})f_{\gamma,\lambda}](\lambda)|\le c\left(\frac{|\lambda|}{(1-|\lambda|^2)^\gamma}+\frac{\|g\|_\B}{|\lambda|\gamma(1-|\lambda|^2)^\gamma}\right)
\quad(0\le k\le n-1).
\]
Using these inequalities in~{\eqref{eqn:basic:Bloch:estimate:0}} we obtain
\begin{align}\label{eqn:basic:Bloch:estimate:1} |(g_r^{n+1})'(\lambda)|
&\le |(g_r^{n+1})'(\lambda)|\frac{c\|g\|_\B}{|\lambda|^2\gamma}+\frac{c_{\al}c_{\alpha,\gamma}\|L_g\|_{\alpha,p}}{|\la|(1-|\lambda|^2)}   \\
&\nonumber\,\,\, +
c\sum_{k=0}^{n-1}\frac{|(g_r^{k+1})'(\lambda)|}{k+1}
\biggl(1+\frac{\|g\|_\B}{|\la|^2\gamma}\biggr),
\end{align}
when $\gamma|\lambda|>\|g\|_\B$.  
Now if $\gamma$ satisfies $\gamma>8(c+1)\|g\|_\B$, \eqref{eqn:basic:Bloch:estimate:1}  gives for $|\lambda|>\frac1{2}$ 
\[
\frac1{2}|(g_r^{n+1})'(\lambda)|\le
 \frac{2c_{\alpha}c_{\alpha,\gamma}\|L_g\|_{\alpha,p}}{(1-|\lambda|^2)}+\frac{3c}{2}\sum_{k=0}^{n-1}\frac{|(g_r^{k+1})'(\lambda)|}{k+1}.
\]
Thus, we either have $\|g^{n+1}_r\|_\B=\sup_{|\lambda|\le\frac1{2}}(1-|\lambda|^2)|(g_r^{n+1})'(\lambda)|$, or, by  Proposition \ref{prop:Bn:nested} and the last inequality,
\[
\|g^{n+1}_r\|_\B\le
4c_{\alpha}c_{\alpha,\gamma}\|L_g\|_{\alpha,p}+
 c'\sum_{k=0}^{n-1}\frac1{k+1}\|g_r^{n+1}\|_\B^{\frac{k+1}{n+1}}. 
\]
This shows that  $\|g^{n+1}_r\|_\B$  stays bounded when $r\to 1^-$, {\em  i.e.} $g^{n+1}\in \B$.
\end{proof}


\subsection{Compositions of two analytic paraproducts. }\label{2letter-words}   Corollary \ref{prop:boundedness:two:letters:operators} together with the identities
\begin{align*}
M^2_g&=S_g^2+2S_gT_g+g^2(0)\,\delta_0\\
M_gT_g&= S_gT_g+T^2_g\\
S_gM_g &= S_gT_g+ S^2_g\\
 T_gM_g&=S_gT_g\\
 T_gS_g&=S_gT_g-T^2_g-g(0)(g-g(0))\,\delta_0\\
 M_gS_g&=S_gT_g-T^2_g+S^2_g-g(0)(g-g(0))\delta_0
\end{align*}  
yield a complete characterization of compositions of two analytic paraproducts.   A summary for $\alpha>-1$
 is provided in the following table. The analogue for the $H^p$-case can be obtained replacing $\B$ by $BMOA$.

\noindent
\hspace*{\fill}
\hspace*{\fill}
\vspace{-4pt}
\begin{center}
\scalebox{1.06}[1.1]{
\begin{tabular}
{||c|c|c|c||}\hline\hline
\multicolumn{4}{||c||}{{\footnotesize\bf Boundedness of composition of analytic paraproducts on $A_\al^p$, $\alpha>-1$}}\\
\hline\hline
&{\footnotesize $T_g$} 
&{\footnotesize $S_g$}
&{\footnotesize $M_g$} \\
\hline\hline
{\footnotesize $T_g$ }
&{\footnotesize $T_g^2\in\BB(A_{\alpha}^p)\Leftrightarrow T_g\in\BB(A_{\alpha}^p)$} 
&{\footnotesize $S_gT_g\in\BB(A_{\alpha}^p)\Leftrightarrow   T_{g^2}\in\BB(A_{\alpha}^p)$}  
&{\footnotesize $M_gT_g\in\BB(A_{\alpha}^p)\Leftrightarrow    
           T_{g^2}\in\BB(A_{\alpha}^p)$} \\
{\footnotesize $\mbox{ }$ }
&{\footnotesize \hspace*{32pt}$\Leftrightarrow g\in\B$} 
&{\footnotesize \hspace*{42pt}$\Leftrightarrow   g^2\in\B$}  
&{\footnotesize \hspace*{46pt}$\Leftrightarrow    
           g^2\in\B$} \\           
\hline
{\footnotesize $S_g$} 
&{\footnotesize $T_gS_g\in\BB(A_{\alpha}^p)\Leftrightarrow    
           T_{g^2}\in\BB(A^p_{\alpha})$}  
&{\footnotesize $S_g^2\in\BB(A_{\alpha}^p)\Leftrightarrow    
           S_g\in\BB(A^p_{\alpha})$} 
&{\footnotesize $M_gS_g\in\BB(A_{\alpha}^p)\Leftrightarrow    
           S_g\in\BB(A^p_{\alpha})$} \\ 
{\footnotesize $\mbox{ }$ }
&{\footnotesize \hspace*{42pt}$\Leftrightarrow    
          g^2\in\B$}  
&{\footnotesize \hspace*{32pt}$\Leftrightarrow    
           g\in H^{\infty}$} 
&{\footnotesize \hspace*{46pt}$\Leftrightarrow    
           g\in H^{\infty}$} \\            
\hline
{\footnotesize $M_g$} 
&{\footnotesize $T_gM_g\in\BB(A_{\alpha}^p)\Leftrightarrow    
           T_{g^2}\in\BB(A_{\alpha}^p)$} 
&{\footnotesize $S_gM_g\in\BB(A_{\alpha}^p)\Leftrightarrow    
           S_g\in\BB(A_{\alpha}^p)$}
&{\footnotesize $M_g^2\in\BB(A_{\alpha}^p)\Leftrightarrow    
           M_g\in\BB(A_{\alpha}^p)$} \\  
{\footnotesize $\mbox{ }$ }
&{\footnotesize \hspace*{46pt}$\Leftrightarrow    
           g^2\in\B$} 
&{\footnotesize \hspace*{46pt}$\Leftrightarrow    
           g\in H^{\infty}$}
&{\footnotesize  \hspace*{32pt}$\Leftrightarrow    
           g\in H^{\infty}$} \\            
\hline \hline                 
\end{tabular} }
\end{center}
\vspace{8pt}

\section{Proof of Theorem~{\ref{thm:counterexamples}}}

The following proposition is strongly used in 
the proof of Theorem~{\ref{thm:counterexamples}}.
\begin{proposition}\label{prop:main:prop:counterexamples}
Let $g\in\H(\D)$. Assume that $g$ is bounded away from zero, that is, $\inf_{z\in\D}|g(z)|>0$. 
Let $h$ be a branch of the logarithm of $g$, and, for any $\beta\in\R$, define the $\beta$-power of $g$ as $g^{\beta}:=e^{\beta h}$.  
Then:
\begin{enumerate}[label={\sf\alph*)},topsep=3pt, leftmargin=*,itemsep=3pt] 
\item \label{item:prop:main:prop:counterexamples:1}
If $g\in BMOA$ ($g\in VMOA$), then $g^{\beta}\in BMOA$ ($g^{\beta}\in VMOA$, resp.), for any $\beta<1$.
\item \label{item:prop:main:prop:counterexamples:2} 
If $g\in BMOA$ ($g\in VMOA$), then
 $S_{g^{\beta}}T^2_{g^{\beta}}\in\BB(A^p_{\alpha})$
 ($S_{g^{\beta}}T^2_{g^{\beta}}\in\KK(A^p_{\alpha})$, resp.), for any $\alpha\ge-1$,  $\beta\in(0,\frac23)$, and $p>0$.   
\end{enumerate}
\end{proposition}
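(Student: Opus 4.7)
For part (a) the plan is to invoke the Carleson measure characterization of $BMOA$ (resp.\ of $VMOA$) recalled at the beginning of Section \ref{sec:spaces:of:symbols}. Differentiation gives $|(g^\beta)'(z)|^2=\beta^2|g(z)|^{2\beta-2}|g'(z)|^2$, and the hypothesis $\inf_\D|g|\ge c>0$ combined with $\beta<1$ (so that $2\beta-2<0$) makes $|g|^{2\beta-2}\le c^{2\beta-2}$ uniformly bounded. Hence
\[
(1-|z|^2)\,|(g^\beta)'(z)|^2\,dA(z)\;\lesssim\;(1-|z|^2)\,|g'(z)|^2\,dA(z),
\]
and the right-hand side is a Carleson (resp.\ vanishing Carleson) measure for $H^2$ by assumption; the characterization then yields $g^\beta\in BMOA$ (resp.\ $g^\beta\in VMOA$).

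For part (b), the plan is to reduce $S_{g^\beta}T^2_{g^\beta}$ to a sum of compositions of operators $T_{g^\gamma}$ with $\gamma<1$, which are covered by part (a) together with Theorem~\ref{thm:TSM-bounded}. Concretely, I would establish the algebraic identity
\[
S_{g^\beta}T^2_{g^\beta}\;=\;\tfrac{2}{3}\,T_{g^{3\beta/2}}\,T_{g^{\beta/2}}\,T_{g^\beta}\;+\;\tfrac{4}{9}\,T^2_{g^{3\beta/2}},
\]
by two algebraic steps. First, identity \eqref{eqn:ST} with $j=k=1$ applied to the symbol $g^\beta$ gives $S_{g^\beta}T_{g^\beta}=\tfrac12T_{g^{2\beta}}$, hence $S_{g^\beta}T^2_{g^\beta}=\tfrac12T_{g^{2\beta}}T_{g^\beta}$; and the pointwise identity $(g^{2\beta})'=\tfrac{4}{3}g^{\beta/2}(g^{3\beta/2})'$ (a one-line differentiation using that $g$ is zero-free) converts this into $\tfrac23 T_{g^{3\beta/2}}M_{g^{\beta/2}}T_{g^\beta}$. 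Second, \eqref{eqn:MSTdelta} with symbol $g^{\beta/2}$, applied to $T_{g^\beta}f$ (whose value at $0$ vanishes, killing the $\delta_0$ term), and \eqref{eqn:ST} applied with $j=1,k=2$ to the symbol $g^{\beta/2}$ (giving $S_{g^{\beta/2}}T_{g^\beta}=\tfrac{2}{3}T_{g^{3\beta/2}}$), together yield $M_{g^{\beta/2}}T_{g^\beta}=T_{g^{\beta/2}}T_{g^\beta}+\tfrac{2}{3}T_{g^{3\beta/2}}$; substitution produces the displayed identity.

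With the identity in hand the conclusion is immediate. The range $\beta<2/3$ is precisely what keeps each of $\beta/2$, $\beta$ and $3\beta/2$ strictly below $1$; part (a) then places $g^{\beta/2}$, $g^\beta$ and $g^{3\beta/2}$ in $BMOA$ (resp.\ in $VMOA$) whenever $g$ is, and Theorem~\ref{thm:TSM-bounded} puts each of $T_{g^{\beta/2}}$, $T_{g^\beta}$ and $T_{g^{3\beta/2}}$ in $\BB(A^p_\alpha)$ (resp.\ in $\KK(A^p_\alpha)$) for all $\alpha\ge-1$ and $p>0$. Since $\BB(A^p_\alpha)$ is an algebra and $\KK(A^p_\alpha)$ is a two-sided ideal in it, both the triple composition $T_{g^{3\beta/2}}T_{g^{\beta/2}}T_{g^\beta}$ and the square $T^2_{g^{3\beta/2}}$ inherit the boundedness (resp.\ compactness), and so does their linear combination $S_{g^\beta}T^2_{g^\beta}$.

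The main difficulty, in my view, is spotting the factorization $(g^{2\beta})'=\tfrac{4}{3}g^{\beta/2}(g^{3\beta/2})'$: this is the algebraic decomposition that balances the three $g$-weights appearing in $S_{g^\beta}T^2_{g^\beta}$ so that the largest exponent ultimately involved is $3\beta/2$, which is exactly why the proposition's threshold sits at $\beta=2/3$. Once this rewriting is isolated, the remainder of the argument is essentially a bookkeeping exercise combining \eqref{eqn:MSTdelta}, \eqref{eqn:ST}, part (a) and Theorem~\ref{thm:TSM-bounded}.
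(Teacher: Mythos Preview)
Your proof is correct. Part (a) is identical to the paper's argument. For part (b), however, you take a genuinely different route from the paper.

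The paper proves a separate lemma (Lemma~\ref{lem:main:prop:counterexamples}) giving the decomposition
\[
S_{g^{\beta}}T^2_{g^{\beta}}
=\tfrac{(2\beta-1)\beta}{1-\varepsilon}\, T_gT_{g^{1-\varepsilon}}M_{g^{2\beta-2+\varepsilon}}T_{g^{\beta}}
+\tfrac{\beta^2}{1-\varepsilon}\,T_gT_{g^{1-\varepsilon}}M_{g^{3\beta-2+\varepsilon}},
\]
valid for any $\varepsilon\ne1$, and then chooses $\varepsilon\in(0,\min(2-3\beta,1))$ so that the exponents in the multiplication operators are negative; boundedness of those $M$-factors then comes from $\inf_{\D}|g|>0$, while $T_g$, $T_{g^{1-\varepsilon}}$, $T_{g^{\beta}}$ are handled by part (a). Your decomposition
\[
S_{g^\beta}T^2_{g^\beta}=\tfrac{2}{3}\,T_{g^{3\beta/2}}\,T_{g^{\beta/2}}\,T_{g^\beta}+\tfrac{4}{9}\,T^2_{g^{3\beta/2}}
\]
is cleaner: it uses only $T$-operators with symbols $g^{\gamma}$, $\gamma<1$, avoids the auxiliary parameter $\varepsilon$ and the multiplication operators, and makes the threshold $\beta=2/3$ transparent (it is exactly where $3\beta/2=1$). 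The paper's formula, on the other hand, keeps $g$ itself as the basic symbol and isolates the dependence on $\beta$ in bounded multipliers, which is perhaps more in the spirit of the rest of the article. Either way, the essential mechanism is the same: rewrite $S_{g^\beta}T^2_{g^\beta}$ as a polynomial in operators each of which is bounded (resp.\ compact) once $g\in BMOA$ (resp.\ $VMOA$) and $\beta<2/3$.
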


A key tool in the proof of Proposition~{\ref{prop:main:prop:counterexamples}} is the following simple computational lemma.

\begin{lemma}\label{lem:main:prop:counterexamples}
Let $g\in\H(\D)$ be a zero free function, and, for any $\beta\in\R$, let $g^{\beta}$ be as in the statement of the preceding proposition. 
Then 
\begin{equation}\label{eqn:lem:main:prop:counterexamples}
S_{g^{\beta}}T^2_{g^{\beta}}
=\tfrac{(2\beta-1)\beta}{1-\varepsilon}\, T_gT_{g^{1-\varepsilon}}M_{g^{2\beta-2+\varepsilon}}T_{g^{\beta}}
+\tfrac{\beta^2}{1-\varepsilon}\,T_gT_{g^{1-\varepsilon}}M_{g^{3\beta-2+\varepsilon}},
\end{equation}
for every $\beta\in\R$ and  $\varepsilon\in\R\setminus\{1\}$. 
\end{lemma}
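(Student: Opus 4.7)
The plan is to verify this operator identity by applying both sides to an arbitrary $f\in\H(\D)$ and showing that the derivatives in $z$ agree, since both sides already vanish at $z=0$ (every summand begins with an integration operator). The underlying chain-rule identity $(g^{\beta})'=\beta\,g^{\beta-1}g'$, which follows immediately from $g^{\beta}=e^{\beta h}$ with $h'=g'/g$, will do most of the work.

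For the left side, setting $F:=T_{g^{\beta}}f$ and using $F'=\beta\,g^{\beta-1}g'\,f$ gives $(T^2_{g^{\beta}}f)'=\beta\,g^{\beta-1}g'F$, and therefore
\[
(S_{g^{\beta}}T^2_{g^{\beta}}f)'=g^{\beta}\cdot\beta\,g^{\beta-1}g'F=\beta\,g^{2\beta-1}g'F.
\]

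The key simplification on the right side is an $\varepsilon$-cancellation:
\[
T_{g^{1-\varepsilon}}M_{g^{a+\varepsilon}}h(z)=(1-\varepsilon)\int_0^z g^{a}\,h\,g'\,d\zeta\qquad(h\in\H(\D),\ a\in\R),
\]
which holds because $(g^{1-\varepsilon})'=(1-\varepsilon)g^{-\varepsilon}g'$ absorbs the excess factor $g^{\varepsilon}$. This reveals that each summand on the right side is in fact independent of $\varepsilon$: the prefactors $1/(1-\varepsilon)$ cancel against the $(1-\varepsilon)$ produced by this formula. Applying it with $(a,h)=(2\beta-2,F)$ in the first term and $(a,h)=(3\beta-2,f)$ in the second, and then differentiating the outer $T_g$, I obtain
\[
(\text{RHS of \eqref{eqn:lem:main:prop:counterexamples}})'=g'(z)\,\bigl[(2\beta-1)\beta\,\textstyle\int_0^z g^{2\beta-2}Fg'\,d\zeta+\beta^2\int_0^z g^{3\beta-2}fg'\,d\zeta\bigr].
\]

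To conclude, after cancelling the common factor $\beta g'$, it suffices to prove
\[
g^{2\beta-1}F=(2\beta-1)\int_0^z g^{2\beta-2}Fg'\,d\zeta+\beta\int_0^z g^{3\beta-2}fg'\,d\zeta.
\]
Both sides vanish at $z=0$ (the left because $F(0)=0$), so a single final differentiation settles the matter: the product rule $(g^{2\beta-1}F)'=(2\beta-1)g^{2\beta-2}g'F+g^{2\beta-1}F'$ combined with $F'=\beta\,g^{\beta-1}g'f$ matches the right-hand derivative term by term. The proof presents no real obstacle; the only delicate point is the $\varepsilon$-bookkeeping, which is precisely the reason the identity carries a free parameter $\varepsilon$.
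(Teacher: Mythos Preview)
Your proof is correct and follows essentially the same computation as the paper's. The paper organizes it as a derivation (writing $Lf=\beta T_gF$ with $F=g^{2\beta-1}T_{g^{\beta}}f$, then computing $F'$ and recognizing it as $(g^{1-\varepsilon})'$ times a sum, hence $F=T_{g^{1-\varepsilon}}(\cdots)$), whereas you present it as a verification by matching derivatives; the underlying identities are identical.
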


\begin{proof}
The fact that $(g^{\beta})^2=g^{2\beta}$ gives that 
$L:=S_{g^{\beta}}T^2_{g^{\beta}}
=\tfrac12 T_{g^{2\beta}}T_{g^{\beta}}$. 
Thus, for any $f\in\H(\D)$, we have that
\[
(Lf)'
=\tfrac12\,(g^{2\beta})'\,T_{g^{\beta}}f
=\beta\,g'\,F,\quad\mbox{
where $F=g^{2\beta-1}\,T_{g^{\beta}}f$.}
\]
Since $Lf(0)=0$, it follows that $Lf=\beta\,T_gF$.
 Now $F(0)=0$ and 
\begin{align*}
F'
&= (2\beta-1)\,g^{2\beta-2}\,g'\,T_{g^{\beta}}f
    +\beta\,g^{3\beta-2}\,g'\,f \\
&= (g^{1-\varepsilon})'\,\bigl(\tfrac{2\beta-1}{1-\varepsilon}\,
 g^{2\beta-2+\varepsilon}\,T_{g^{\beta}}f
 + \tfrac{\beta}{1-\varepsilon}\,
   g^{3\beta-2+\varepsilon}\,f\bigr)  ,
\end{align*}
for any $\varepsilon\in\R\setminus\{1\}$. Therefore
\[
F=\tfrac{2\beta-1}{1-\varepsilon}\,T_{g^{1-\varepsilon}}\,
  M_{g^{2\beta-2+\varepsilon}}\,T_{g^{\beta}}f
   +\tfrac{\beta}{1-\varepsilon}\,T_{g^{1-\varepsilon}}\,
  M_{g^{3\beta-2+\varepsilon}}f,
\]
and hence~{\eqref{eqn:lem:main:prop:counterexamples}} holds.
\end{proof}

\begin{proof}[\bf Proof of Proposition~{\ref{prop:main:prop:counterexamples}}]$\mbox{ }$
\begin{enumerate}[label={\sffamily{\alph*)}},topsep=3pt, 
leftmargin=0pt, itemsep=4pt, wide, listparindent=0pt, itemindent=6pt] 
\item Just observe that,  since $g$ is bounded away from zero, $g^{\beta-1}$ is bounded for $\beta<1$, and so we have the estimate $(1-|z|^2)|(g^{\beta})'|^2\lesssim (1-|z|^2)|g'(z)|^2$. 
\item Let $g\in BMOA$ ($g\in VMOA$), $\alpha\ge-1$,  
$\beta\in(0,\frac23)$, and $p>0$. Since $\beta<\frac23$, there is  $\varepsilon\in\R$ such that $0<\varepsilon<\min(2-3\beta,1)$. Taking into account that
$\beta>0$, it follows that $\varepsilon\in(0,1)$ and
$2\beta-2+\varepsilon<3\beta-2+\varepsilon<0$. As a consequence,
we have that:
\begin{itemize}
\item $T_g,T_{g^{1-\varepsilon}}, T_{g^{\beta}}\in\BB(A^p_{\alpha})$ ($T_g,T_{g^{1-\varepsilon}}, T_{g^{\beta}}\in\KK(A^p_{\alpha})$, resp.), by~{\ref{item:prop:main:prop:counterexamples:1}}.
\item $M_{g^{2\beta-2+\varepsilon}},M_{g^{3\beta-2+\varepsilon}}\in\BB(A^p_{\alpha})$, since $g^{2\beta-2+\varepsilon},g^{3\beta-2+\varepsilon}\in H^{\infty}$, because $g$ is bounded away from zero.
\end{itemize}
Moreover, since $\varepsilon<1$, {\eqref{eqn:lem:main:prop:counterexamples}}~holds, and we conclude that 
 $S_{g^{\beta}}T^2_{g^{\beta}}\in\BB(A_{\alpha})$
 ($S_{g^{\beta}}T^2_{g^{\beta}}\in\KK(A^p_{\alpha})$, resp.). Hence the proof is complete.\qedhere 
\end{enumerate}
\end{proof}

 We also need  the following  auxiliary result.

\begin{lemma}\label{lem:vanishing:Carleson:measure:improved}
Let $f\in C(\overline{\D}\setminus\{1\})$ such that
\begin{equation}
\label{eqn:lem:vanishing:Carleson:measure:improved:0}
\lim_{\substack{z\to 1\\z\in\D}} (1-z)\,f(z)=0.
\end{equation}
Then $d\mu(z)=(1-|z|^2)|f(z)|^2\,dA(z)$ is a vanishing Carleson measure for 
$H^p$, $0<p<\infty$.
\end{lemma}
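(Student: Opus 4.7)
The plan is to invoke the standard characterization of vanishing Carleson measures on arcs: $\mu$ is a vanishing $H^p$-Carleson measure if and only if $\lim_{h\to 0^+}\sup_{|I|\le h}\mu(S(I))/|I|=0$, where $S(I)=\{re^{i\theta}:e^{i\theta}\in I,\,1-h\le r<1\}$ is the Carleson box over the arc $I\subset\T$ of length $h=|I|$. Thus everything reduces to a uniform control of $\mu(S(I))/|I|$ as $|I|\to 0$. Since $f$ is continuous on $\overline{\D}\setminus\{1\}$, the only singularity lies at $z=1$, so we will split the family of small arcs into those sitting far from $1$ (where $f$ is bounded by compactness) and those near $1$ (where we exploit the hypothesis \eqref{eqn:lem:vanishing:Carleson:measure:improved:0}).

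Fix $\varepsilon>0$ and use the assumption to choose $\eta>0$ so small that $|(1-z)f(z)|^2<\varepsilon$ whenever $z\in\D$ and $|1-z|<3\eta$. The compact set $K=\{z\in\overline{\D}:|1-z|\ge\eta\}$ does not contain $1$, so $f$ is bounded on $K$, say $|f|\le M$. Consider an arc $I$ of length $h<\min(\eta/2,1)$ and distinguish:

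\emph{Case (i): $\mathrm{dist}(1,I)\ge\eta$.} For $z\in S(I)$ one has $|1-z|\ge|1-z/|z||-(1-|z|)\ge\eta-h\ge\eta/2$, hence $S(I)\subset K$ and $|f|\le M$ on $S(I)$. Since $\int_{S(I)}(1-|z|^2)\,dA\lesssim h^3$, we obtain $\mu(S(I))/|I|\lesssim M^2h^2$, which tends to $0$ with $h$.

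\emph{Case (ii): $\mathrm{dist}(1,I)<\eta$.} For $z\in S(I)$ the estimate $|1-z|\le|1-z/|z||+(1-|z|)<2\eta+h<3\eta$ gives $|f(z)|^2<\varepsilon/|1-z|^2$, hence
\[
\mu(S(I))\le\varepsilon\int_{S(I)}\frac{1-|z|^2}{|1-z|^2}\,dA(z).
\]
The decisive step is the uniform estimate
\[
\int_{S(I)}\frac{1-|z|^2}{|1-z|^2}\,dA(z)\lesssim|I|\qquad(I\subset\T\text{ arc}),
\]
which is exactly the Carleson measure condition satisfied by $d\nu(z)=(1-|z|^2)|g'(z)|^2\,dA(z)$ for $g(z)=\log\tfrac{e}{1-z}$; since $g\in BMOA$ (see Section~\ref{sec:spaces:of:symbols}), $d\nu$ is an $H^p$-Carleson measure. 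Consequently $\mu(S(I))/|I|\lesssim\varepsilon$ in this case.

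Combining the two cases yields $\sup_{|I|\le h}\mu(S(I))/|I|\le C_1M^2h^2+C_2\varepsilon$ for every $h<\min(\eta/2,1)$, so this supremum tends to $0$ as $h\to0$, proving the lemma. The only non-routine ingredient is the uniform Carleson-box estimate for $(1-|z|^2)/|1-z|^2$, which I expect to be the main obstacle; however, recognizing it as the Carleson measure generated by the prototypical $BMOA$ function $\log\tfrac{e}{1-z}$ dissolves the difficulty.
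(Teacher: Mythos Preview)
Your argument is correct and follows essentially the same strategy as the paper: split into points far from $1$ (where $f$ is bounded by continuity on a compact set) and points near $1$ (where the hypothesis yields $|f(z)|^2\le\varepsilon/|1-z|^2$ and one appeals to the Carleson measure $(1-|z|^2)|1-z|^{-2}\,dA$ coming from $\log\frac{e}{1-z}\in BMOA$). The only cosmetic difference is that the paper works with the Poisson-kernel characterization $\int_{\D}\frac{1-|a|^2}{|1-\overline az|^2}\,d\mu(z)\to0$ rather than with Carleson boxes; the two are equivalent and the core idea is identical. (One tiny slip: in Case~(i) you only show $|1-z|\ge\eta/2$, not $\ge\eta$, so define $K$ with radius $\eta/2$ or take $M=\sup_{|1-z|\ge\eta/2}|f|$.)
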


\begin{proof}
Let $\Omega_{\delta}=\D\cap D(1,\delta)$, for every $0<\delta<1$. Let $a\in\D$ and $0<\delta<1$. Then 
\[
\int_{\D}\frac{1-|a|^2}{|1-\overline{a}z|^2}\,d\mu(z)
=\biggl\{\int_{\D\setminus\Omega_{\delta}}+\int_{\Omega_{\delta}}\biggr\}(1-|\phi_a(z)|^2)|f(z)|^2\,dA(z)
=I_{\delta}+J_{\delta}.
\]
Now, by \cite[Proposition 1.4.10]{Rudin}, we have that
\begin{align}
\label{eqn:lem:vanishing:Carleson:measure:improved:1}
I_{\delta}
&\le \biggl(\sup_{z\in\D\setminus\Omega_{\delta}} |f(z)|^2\biggr)\int_{\D} (1-|\phi_a(z)|^2)\,dA(z) \\
\nonumber
&\le C_1\,(1-|a|^2)\sup_{z\in\D\setminus\Omega_{\delta}} |f(z)|^2,
\end{align}
where $C_1>0$ is an absolute constant. Next recall that, since $\log(1-z)$ is a function in $BMOA$ (where $\log$ denotes the principal branch of the logarithm), $d\mu_1(z)=\dfrac{1-|z|^2}{|1-z|^2}\,dA(z)$ is a Carleson measure for the Hardy spaces, and so
\begin{align}
\label{eqn:lem:vanishing:Carleson:measure:improved:2}
 J_{\delta}&\le  
 \biggl(\sup_{z\in\Omega_{\delta}}
 |1-z| |f(z)|\biggr)^2
 \int_{\D}\frac{1-|a|^2}{|1-\overline{a}z|^2}\,d\mu_1(z)\\
 \nonumber
 &\le  
 C_2\biggl(\sup_{z\in\Omega_{\delta}}
 |1-z| |f(z)|\biggr)^2,
\end{align}
where $C_2>0$ is an absolute constant.
Since  $f\in C(\overline{\D}\setminus\{1\})$, it is clear that \eqref{eqn:lem:vanishing:Carleson:measure:improved:0}, 
\eqref{eqn:lem:vanishing:Carleson:measure:improved:1}, and 
\eqref{eqn:lem:vanishing:Carleson:measure:improved:2} imply that $\mu$ is a vanishing Carleson measure for the Hardy spaces, {\em i.e.}
\[
\lim_{|a|\to1^{-}}\int_{\D}\frac{1-|a|^2}{|1-\overline{a}z|^2}\,d\mu(z)=0.\qedhere
\]
\end{proof}

\begin{proof}[{\bf Proof of Theorem~{\ref{thm:counterexamples}}}]
$\mbox{ }$
\begin{enumerate}[label={\sffamily{\alph*)}},topsep=3pt, 
leftmargin=0pt, itemsep=4pt, wide, listparindent=0pt, itemindent=6pt] 
\item Assume the contrary, {\em i.e.}\ $S_gT_g^2\in \BB(A_\alpha^p)$, for some $\alpha\ge -1,~p>0$.  Then a standard estimate yields for every $f\in A_\alpha^p$,
\begin{equation}\label{unbounded-ex}
|(S_gT_g^2f)'(r)| \lesssim (1-r)^{-1-\frac{\alpha+2}{p}}\|S_gT_g^2\|_{\alpha,p}\|f\|_{\alpha,p}
\quad(r\in [0,1)).
\end{equation}
The usual test functions $f_{r,k}(z)=(1-rz)^{-k}$, for $z\in\D$, with $r\in (0,1)$,  $kp>\alpha+2$, satisfy $\|f_{r,k}\|_{\alpha,p}\simeq (1-r)^{-k+\frac{\alpha+2}{p}}$, and  
\begin{align*} |(S_gT_g^2f_{r,k})'(r)|&=\frac1{1-r}\log\frac{e}{1-r}\int_0^r\frac{ds}{(1-s)(1-rs)^k}\\&
\ge \frac1{1-r}\log\frac{e}{1-r}\int_0^r\frac{rds}{(1-rs)^{k+1}}\\&=\frac1{k(1-r)}\log\frac{e}{1-r}\left(\frac1{(1-r^2)^k}-1\right),
\end{align*}
which contadicts \eqref{unbounded-ex} when $r\to 1^-$. 
\item If $f\in\B$, then
\[
|f(z)|\lesssim\log\biggl(\frac{e}{1-|z|}\biggr)\qquad(z\in\D),
\]
but
\[
\lim_{r\to1^{-}}\frac{|g^{2\beta}(r)|}{\log\bigl(\frac{e}{1-r}\bigr)}
=\lim_{r\to1^{-}}\biggl[\log\biggl(\frac{e}{1-r}\biggr)\biggr]^{2\beta-1}
=\infty\qquad(\beta>\tfrac12),
\]
and so $g^{2\beta}\notin\B$, for any $\beta>\frac12$. 

Now let us prove that $S_{g^{\beta}}T^2_{g^{\beta}}\in\KK(A^p_{\alpha})$, for any $\alpha\ge-1$ and $p>0$.
We know that $g\in BMOA$. 
Moreover, since $z\mapsto \frac{e}{1-z}$ maps the half-disc
\[
D^{-}=\{z\in\C:|z-1|<1+\tfrac{e}{2},\,\Re z<1\}
\]
onto the domain 
$\{z\in\C:|z|>\tfrac{2e}{2+e},\,\Re z>0\}$,
 it follows that $g$ is 
bounded away from zero and $g^{\beta}$ extends analytically to 
$D^{-}$.
In particular, 
$(g^{\beta})'\in C(\overline{\D}\setminus\{1\})$ and satisfies
\[
\lim_{\substack{z\to1\\z\in\D}}(1-z)(g^{\beta})'(z)
=\beta\lim_{\substack{z\to1\\z\in\D}}g^{\beta-1}(z)=0
\qquad(\beta<1).
\]
Then Lemma~{\ref{lem:vanishing:Carleson:measure:improved}} gives that $g^{\beta}\in VMOA$, for every $\beta<1$, and 
 Proposition~{\ref{prop:main:prop:counterexamples}}
shows that $S_{g^{\beta}}T^2_{g^{\beta}}\in\KK(A^p_{\alpha})$, for any $\alpha\ge-1$, $\beta\in(0,\frac23)$, and $p>0$.
\qedhere 
\end{enumerate}
\end{proof}

\end{document}